\definecolor{DarkBlue}{rgb}{0, 0, 0.545}
\def\mybf#1{\textsf{#1}}
\tikzset{every scope/.style={>=angle 60,thick}}
\newcommand{\comment}[1]{}
\newcommand\Figref[1]{Figure \ref{#1}\ifthenelse{\value{page}=\pageref{#1}}{}{ on page \pageref{#1}}}
\newcommand\figref[1]{figure \ref{#1}\ifthenelse{\value{page}=\pageref{#1}}{}{ on page \pageref{#1}}}
\def\bt{\begin{center}\begin{tikzpicture}\matrix[matrix of math nodes,column sep=1cm, row sep=1cm]}
\def\bth{\begin{center}\begin{tikzpicture}\matrix[matrix of math nodes,column sep=.5cm, row sep=1cm]}
\def\et{\end{tikzpicture}\end{center}}
\def\btm#1#2{\begin{center}\begin{tikzpicture}\matrix[matrix of math nodes,column sep=#1, row sep=#2}
\def\bsc{\begin{scope}}
\def\esc{\end{scope}}
\def\|#1{\,\begin{tikzpicture}[baseline]\draw[-,thin](0,-3pt)--(0,6pt);\node[anchor=west] at (-1pt,-2pt) {$_{#1}\!\!\!$}; \end{tikzpicture}\,}
\def\hyphen{\,\text{-}\,}
\def\coloneq{\mathrel{\mathop:}=}
\def\cotensor{\oblong}
\def\otimes{\varotimes}
\def\im{\mathbf{im}}
\def\ker{\mathsf{ker}}
\def\coker{\mathsf{coker}}
\def\int{\mathsf{int}}
\def\LatT1{\mathsf{Lat_{T1}}}
\newcommand{\id}{\operatorname{id}}
\newcommand{\can}{\operatorname{\it can}}
\def\bQ{\mathbb{Q}}
\def\bZ{\mathbb{Z}}
\def\Alg{\mathit{Alg}}
\def\Sub{\mathsf{Sub}}
\def\Id{\mathsf{Id}}
\def\coId{\mathsf{coId}}
\def\coid#1{\coId_{\mathit{#1}}}
\def\inf{\mathit{inf}}
\def\Quot{\mathsf{Quot}}
\def\qquot{\mathsf{Quot_{\mathit{gen}}}}
\def\qsub{\mathsf{Sub_{\mathit{gen}}}}
\def\qid{\mathsf{Id_{\mathit{gen}}}}
\def\id{\mathit{id}}
\def\can{\mathit{can}}
\theoremstyle{break}
\newtheorem{theorem}{Theorem}[section] 
\newtheorem{proposition}[theorem]{Proposition}
\newenvironment{proof}{\par\noindent
    \textbf{Proof:}}{\nopagebreak[4]\hfill$\blacksquare$\medskip\\}
\newenvironment{proofof}[1]{\par\noindent \textbf{Proof of {#1}:}}{\nopagebreak[4]\hfill$\blacksquare$\medskip\\}
\newtheorem{definition}[theorem]{Definition}
\newtheorem{lemma}[theorem]{Lemma \usefont{T1}{ptm}{m}{sl}}
\newtheorem{corollary}[theorem]{Corollary}
\theoremstyle{nonumberbreak}
\theoremstyle{nonumberplain}
\theoremstyle{plain}
\newtheorem{example}[theorem]{Example  \usefont{T1}{ptm}{m}{sl}}
\theoremstyle{nonumberplain}
\theoremstyle{plain}
\newtheorem{remark}[theorem]{Remark  \usefont{T1}{ptm}{m}{sl}}
\theoremstyle{break}
\theoremstyle{nonumberbreak}
\theoremstyle{nonumberplain}
\def\ov{\overline}
\def\fill#1{\hbox to #1pt{\hfill}}
\def\galoisskip{.8mm}
\def\galoislength{7mm} 	
\def\shortgaloislength{5mm} 
\def\longgaloislength{12mm} 
\def\arrowlength{6mm}
\def\arrowhight{2pt}
\def\shortarrowlength{4.75mm}
\def\longarrowlength{10mm}
\def\horizontalskip{3pt}
\def\nodeposition{.5}		
\def\shortnodeposition{.3}	
\def\elmaptipheight{.9mm}      
\def\wmpr#1{\smash{\mathop{\hbox to 12pt{\rightarrowfill}}\limits^{#1}}}
\def\mpr#1{
\hspace{\horizontalskip}\begin{tikzpicture}[baseline=-1pt]
	\draw[->] (0,\arrowhight) -- node[above,pos=\nodeposition]{${\scriptstyle #1}$} (\arrowlength,\arrowhight);
\end{tikzpicture}
\hspace{\horizontalskip}}
\def\ir{
\hspace{\horizontalskip}\begin{tikzpicture}[baseline=-1pt]
	\draw[->] (0,\arrowhight) -- (\arrowlength,\arrowhight);
\end{tikzpicture}
\hspace{\horizontalskip}}
\def\smpr#1{
\hspace{\horizontalskip}\begin{tikzpicture}[baseline]
    \draw[->] (0,\arrowhight) -- node[above,pos=\shortnodeposition]{${\scriptstyle #1}$} (\shortarrowlength,\arrowhight);
\end{tikzpicture} \hspace{\horizontalskip}}
\def\sir{
\hspace{\horizontalskip}\begin{tikzpicture}[baseline=-1pt]
    \draw[->] (0,\arrowhight) -- (\shortarrowlength,\arrowhight);
\end{tikzpicture} \hspace{\horizontalskip}}
\def\eir{
\hspace{\horizontalskip}\begin{tikzpicture}[baseline=-1pt]
    \draw[->>] (0,\arrowhight) -- (\arrowlength,\arrowhight);
\end{tikzpicture} \hspace{\horizontalskip}}
\def\lmpr#1{
\hspace{\horizontalskip}\begin{tikzpicture}[baseline=-1pt]
    \draw[->] (0,\arrowhight) -- node[above,pos=\nodeposition]{${\scriptstyle #1}$} (\longarrowlength,\arrowhight);
\end{tikzpicture} \hspace{\horizontalskip}}
\def\smrp#1{\hspace{\horizontalskip}\smash{\mathop{\hbox to 12pt{\rightarrowfill}}\limits_{#1}}\hspace{\horizontalskip}} 
\def\smrp#1{\hspace{\horizontalskip}\smash{\mathop{\hbox to 12pt{\rightarrowfill}}\limits_{#1}}\hspace{\horizontalskip}}
\def\elmap#1{
\hspace{\horizontalskip}\begin{tikzpicture}[baseline=-1pt]
\draw[->] (0,\arrowhight) -- node[above,pos=\nodeposition]{${\scriptstyle #1}$} (\arrowlength,\arrowhight);
\draw[-] ($(0,\arrowhight)+(0,\elmaptipheight)$) -- ($(0,\arrowhight)+(0,-\elmaptipheight)$);
\end{tikzpicture}
\hspace{\horizontalskip}}
\def\eli{
\hspace{\horizontalskip}\begin{tikzpicture}[baseline=-1pt]
\draw[->] (0,\arrowhight) --  (\arrowlength,\arrowhight);
\draw[-] ($(0,\arrowhight)+(0,\elmaptipheight)$) -- ($(0,\arrowhight)+(0,-\elmaptipheight)$);
\end{tikzpicture}
\hspace{\horizontalskip}}
\def\selmap#1{
\hspace{\horizontalskip}\begin{tikzpicture}[baseline=-1pt]
\draw[->] (0,\arrowhight) -- node[above,pos=\nodeposition]{${\scriptstyle #1}$} (\shortarrowlength,\arrowhight);
\draw[-] ($(0,\arrowhight)+(0,\elmaptipheight)$) -- ($(0,\arrowhight)+(0,-\elmaptipheight)$);
\end{tikzpicture}
\hspace{\horizontalskip}}
\def\ellmap#1{
\hspace{\horizontalskip}
\begin{tikzpicture}[baseline=-1pt]
\draw[<-] (0,\arrowhight) -- node[above,pos=\nodeposition]{${\scriptstyle #1}$} (\arrowlength,\arrowhight);
\draw[-] ($(\arrowlength,\arrowhight)+(0,\elmaptipheight)$) -- ($(\arrowlength,\arrowhight)+(0,-\elmaptipheight)$);
\end{tikzpicture}
\hspace{\horizontalskip}}
\def\galois#1#2{
\hspace{\horizontalskip}
\begin{tikzpicture}[baseline=-4pt]
\begin{scope}
\draw[->] (0,\galoisskip) -- node[above,pos=.5]{${\scriptstyle #1}$} (\galoislength,\galoisskip);
\draw[<-] (0,-\galoisskip) -- node[below,pos=.6]{${\scriptstyle #2}$} (\galoislength,-\galoisskip);
\end{scope}
\end{tikzpicture}
\hspace{\horizontalskip}}
\def\sgalois#1#2{
\hspace{\horizontalskip}
\begin{tikzpicture}[baseline=-4pt]
\begin{scope}
\draw[->] (0,\galoisskip) -- node[above,pos=.5]{${\scriptstyle #1}$} (\shortgaloislength,\galoisskip);
\draw[<-] (0,-\galoisskip) -- node[below,pos=.6]{${\scriptstyle #2}$} (\shortgaloislength,-\galoisskip);
\end{scope}
\end{tikzpicture}
\hspace{\horizontalskip}}
\def\sGalois{
\hspace{\horizontalskip}
\begin{tikzpicture}[baseline=-4pt]
\begin{scope}
\draw[->] (0,\galoisskip) --  (\shortgaloislength,\galoisskip);
\draw[<-] (0,-\galoisskip) --  (\shortgaloislength,-\galoisskip);
\end{scope}
\end{tikzpicture}
\hspace{\horizontalskip}}
\def\lgalois#1#2{
\hspace{\horizontalskip}
\begin{tikzpicture}[baseline=-4pt]
\begin{scope}
\draw[->] (0,\galoisskip) -- node[above,pos=.5]{${\scriptstyle #1}$} (\longgaloislength,\galoisskip);
\draw[<-] (0,-\galoisskip) -- node[below,pos=.5]{${\scriptstyle #2}$} (\longgaloislength,-\galoisskip);
\end{scope}
\end{tikzpicture}
\hspace{\horizontalskip}}
\def\arrowhight{2.5pt}
\def\horizontalskip{1pt}
\begin{document}
\titlefont{\usefont{T1}{pbk}{m}{it}\fontsize{17}{24}}
\title{\mbox{Galois Theory for H-extensions and H-coextensions}}
\authori{Dorota Marciniak}
\authoriaddress{National Institute of Telecommunications, Warsaw, Poland}
\authoriemail{dorofia@gmail.com}
\authorii{Marcin Szamotulski\footnote{Second address: National Institute of
	Telecommunications, Warsaw, Poland.}}
\authoriiaddress{IST, Lisbon, Portugal}
\authoriiemail{mszamot@gmail.com}
\maketitle
\thispagestyle{empty}
\begin{abstract}
We show that there exists a Galois correspondence between subalgebras of an H-comodule algebra~A over a base ring R and generalised quotients of a Hopf algebra H. We also show that Q-Galois subextensions are closed elements of the constructed Galois connection. Then we consider the theory of coextensions of H-module coalgebras. We construct Galois theory for them and we prove that H-Galois coextensions are closed. We apply the obtained results to the Hopf algebra itself and we show a simple proof that there is a bijection correspondence between right ideal coideals of H and its left coideal subalgebras when H is finite dimensional. Furthermore we formulate necessary and sufficient conditions when the Galois correspondence is a bijection for arbitrary Hopf algebras. We also present new conditions for closedness of subalgebras and generalised quotients when A is a crossed product.
\end{abstract}	

\section{Introduction}
Hopf--Galois extensions have roots in the approach
of~\cite{sc-dh-ar:galois-theory} who generalised the \emph{classical Galois
    Theory} for field extensions to commutative rings. In the following paper
\cite{sc-ms:hopf-algebras-and-galois-theory} extended these ideas to coactions
of Hopf algebras on \emph{commutative algebras} over rings. The general
definition of a Hopf-Galois extension was first introduced by
\cite{hk-mt:hopf-algebras-and-galois-extensions}. Under the assumption that
\(H\) is finite dimensional their definition is equivalent to the now days
standard
\begin{definition}
	An $H$-extension $A/A^{co\,H}$ is called \mybf{$H$-Hopf-Galois
	    extension} (\mybf{$H$-Galois extension}, for short) if the
	\mybf{canonical map} of right $H$-comodules and left $A$-modules:
	\begin{equation}\label{eq:canonical-map}
	    \can:A\otimes_{A^{co\,H}} A\sir A\otimes H,\ a\otimes b\eli{}ab_{(0)}\otimes b_{(1)}
	\end{equation}
	is an \emph{isomorphism},
	\(A^{co\,H}:=\{a\in A: a_{(0)}\otimes a_{(1)}=a\otimes 1_H\}\). 
\end{definition}

A breakthrough was made by extending the results of
\cite{sc-ms:hopf-algebras-and-galois-theory} to noncommutative setting by
\citeauthor{fo-yz:gal-cor-hopf-galois}. They construct Galois correspondence
for Hopf-Galois extensions which has particularly good properties for field
extensions (see~\citep[Thm~4.4 and Cor.~4.6]{fo-yz:gal-cor-hopf-galois}).
Van~Oystaeyen and Zhang introduce a remarkable construction of an
\emph{associated Hopf algebra} to an $H$-extension $A/A^{co\,H}$, where $A$ as
well as $H$ are supposed to be commutative
(\cite[Sec.~3]{fo-yz:gal-cor-hopf-galois}, for noncommutative
generalisation see:~\cite{ps:hopf-bigalois,ps:gal-cor-hopf-bigal}). We will
denote this Hopf algebra by $L(H,A)$.
\citet[Prop.~3.2]{ps:gal-cor-hopf-bigal} generalises the van Oystaeyen and
Zhang correspondence (see also~\cite[Thm~6.4]{ps:hopf-bigalois}) to Galois
connection between generalised quotients of the associated Hopf algebra
\(L(H,A)\) (i.e. quotients by right ideal coideals) and subextensions of
a faithfully flat \(H\)-Hopf Galois extension of the base ring. In this work
we construct a Galois correspondence without the assumption that the
coinvariants subalgebra is commutative and we also drop the Hopf--Galois
assumption (Theorem~\ref{thm:existence}). Instead of Hopf theoretic approach
of van Oystaeyen, Zhang and Schauenburg we propose to look from lattice
theoretic perspective. Using an existence theorem for Galois connections we
show that if the comodule algebra \(A\) is flat over \(R\) and the functor
\(A\otimes_R-\) preserves infinite intersections then there exists a Galois
correspondence between subalgebras of \(A\) and generalised quotients of the
Hopf algebra \(H\). It turns out that such modules are exactly the
Mittag--Leffler modules (Corollary~\ref{cor:mittag-leffler}). We consider
modules with intersection property in
Section~\ref{sec:modules_with_int_property}, where we also give examples of
flat and faithfully flat modules which fail to have it.  Then we discuss
Galois closedness of generalised quotients and subalgebras. We show that if
a generalised quotient \(Q\) is such that \(A/A^{co\,Q}\) is \(Q\)-Galois then
it is necessarily closed assuming that \(A/A^{co\,H}\) has epimorphic
canonical map (Corollary~\ref{cor:Q-Galois_closed}). Later we prove that this
is also a necessary condition for Galois closedness if \(A=H\) or, more
generally, if \(A/A^{co\,H}\) is a crossed product, \(H\) is flat and
\(A^{co\,H}\) is a flat Mittag--Leffler \(R\)-module
(Theorem~\ref{thm:cleft-case}). We also consider the dual case: of
\(H\)-module coalgebras, which later gives us a simple proof of bijective
correspondence between generalised quotients and left ideal subalgebras
of~\(H\) if it is finite dimensional (Theorem~\ref{thm:newTakeuchi}). This
Takeuchi correspondence, dropping the assumptions of faithfully (co)flatness
in~\cite[Thm.~3.10]{ps:gal-cor-hopf-bigal}, was proved
by~\cite{ss:projectivity-over-comodule-algebras}, who showed that finite
dimensional Hopf algebra is free over any its left coideal subalgebra. Our
proof avoids using this result. We also characterise closed elements of this
Galois correspondence in general case (Theorem~\ref{thm:closed-of-qquot}). As
we already mentioned, we show that a generalised quotient \(Q\) is closed if
and only if \(H/H^{co\,Q}\) is a \(Q\)-Galois extension. Furthermore, we show
that a left coideal subalgebra~\(K\) is closed if and only if \(H\sir H/K^+H\)
is a \(K\)-Galois coextension (see Definition~\ref{defi:coGalois}). This gives
an answer to the question when the bijective correspondence between
generalised quotients over which~\(H\) is faithfully coflat and coideal
subalgebra over which~\(H\) is faithfully flat holds without (co)flatness
assumptions. In the last section we extend the characterisation of closed
subalgebras and closed generalised quotients to crossed products.

\section{Preliminaries}\label{subsec:basics}
A \mybf{partially ordered set}, or \mybf{poset} for short, is a set $P$
together with a \textsf{reflexive}, \textsf{transitive} and
\textsf{anti-symmetric} relation $\leq$. The dual poset to a poset $P$ we will
denote by $P^\mathit{op}$. If the partial order of $P$ is denoted by $\leq$
then the partial order $\leq^\mathit{op}$ of $P^\mathit{op}$ is defined by
\(p_1\leq^\mathit{op}p_2\Leftrightarrow p_2\leq p_1\). Infima in $P$ will be
denoted by $\bigwedge$, i.e.  $\inf_{i\in I}p_i=\bigwedge_{i\in I}p_i$, for
$p_i\in P$.  We will write \(\bigvee\) for suprema in \(P\). A poset which has
all finite infima and suprema is called a \mybf{lattice}.  It is called
\mybf{complete} if arbitrary infima and suprema exist.

\begin{definition}\label{defi:Galois_connection} Let \(P\) and \(Q\) be two
    posets. Then a \mybf{Galois connection} is a pair \((F,G)\) of
    antimonotonic maps: \(F:P\sGalois Q:G \text{ such that }\forall_{p\in
	    P,q\in Q}\quad GFp\geq p\text{ and }FG(q)\geq q\). An element \(p\)
    of \(P\) (respectively \(q\in Q\)) is called \mybf{closed} if and only if
    \(GFp=p\) (\(FGq=q\)). The set of closed elements of \(P\) (\(Q\)) we will
    denote by \(\ov P\) (\(\ov Q\) respectively).
\end{definition}
\begin{proposition}[\cite{bd-hp:introduction-to-lattices}]\label{prop:properties-of-adjunction}
	Let \((F,G)\) be a Galois connection between the poset $P$ and $Q$. Then:
	\begin{enumerate}[topsep=0pt,noitemsep]
		\item[(1)] $\ov P=G(Q)$ and $\ov Q=F(P)$,
		\item[(2)] The restrictions $F|_{\ov P}$ and $G|_{\ov Q}$ are
		    \textsf{inverse bijections} of $\ov P$ and $\ov Q$ (\(\ov
			P\) and \(\ov Q\) are largest such that \(F\) and
		    \(G\) restricts to inverse bijections).
		\item[(3)] The map $F$ is \textsf{unique} in the sense that
		    there exists only one Galois connection of the form
		    \((F,G)\), in a similar way $G$ is \textsf{unique}. 
		\item[(4)] The map $F$ is \textsf{mono} (\textsf{onto}) if and
		    only if the map $G$ is \textsf{onto} (\textsf{mono}). 
		\item[(5)] If one of the two maps \(F,G\) is an
		    \textsf{isomorphism} then the second is its
		    \textsf{inverse}.
	\end{enumerate}
	If \(P\) and \(Q\) are complete lattices then so are the posets of
	closed elements.
\end{proposition}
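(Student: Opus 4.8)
The plan is to reduce every assertion to two backbone facts, which I would establish first. The first is that $FGF = F$ and $GFG = G$: from $GFp \geq p$ and antitonicity of $F$ we get $FGFp \leq Fp$, while instantiating $FGq \geq q$ at $q = Fp$ gives $FGFp \geq Fp$, so antisymmetry yields $FGFp = Fp$, and symmetrically for $G$. The second is the adjunction biconditional $Fp \geq q \iff p \leq Gq$, which I would prove by applying $G$ (resp. $F$) to one side and chaining with the inflationary inequalities $GFp \geq p$, $FGq \geq q$. From these two facts everything else is bookkeeping.

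I would then read off (1)–(3). For (1): if $p$ is closed then $p = GFp \in G(Q)$, and conversely if $p = Gq$ then $GFp = GFGq = Gq = p$ by the first backbone identity, so $p$ is closed; hence $\ov P = G(Q)$, and dually $\ov Q = F(P)$. For (2): since $Fp \in F(P) = \ov Q$ and $Gq \in G(Q) = \ov P$ always, $F$ restricts to $\ov P \to \ov Q$ and $G$ to $\ov Q \to \ov P$; the closedness equations $GFp = p$ on $\ov P$ and $FGq = q$ on $\ov Q$ say precisely that these restrictions are mutually inverse, hence order-reversing bijections, and any $S \subseteq P$, $T \subseteq Q$ on which $F$, $G$ restrict to inverse bijections must satisfy $GF|_S = \mathrm{id}_S$, forcing $S \subseteq \ov P$ and $T \subseteq \ov Q$, which is the maximality clause. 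For (3): the biconditional shows the principal down-set $\{p : p \leq Gq\}$ equals $\{p : Fp \geq q\}$, which depends only on $F$; since an element is determined by its principal down-set (antisymmetry), $G$ is forced by $F$, and dually.

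For (4) I would argue that if $F$ is mono then $F(GFp) = FGFp = Fp$ together with injectivity forces $GFp = p$ for all $p$, so $\ov P = P$ and $G(Q) = \ov P = P$ is onto; conversely if $G$ is onto then $\ov P = P$, so $GF = \mathrm{id}_P$ and $F$ is mono. Thus $F$ mono $\iff \ov P = P \iff G$ onto, and the dual computation (using $\ov Q = Q$) gives $F$ onto $\iff G$ mono. Part (5) is then immediate: an isomorphism $F$ is both mono and onto, so by (4) $G$ is bijective and $\ov P = P$, $\ov Q = Q$, whereupon (2) identifies $G$ as the two-sided inverse of $F$.

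For the final claim I would note that $GF$ is monotone, inflationary, and idempotent (since $GFGF = (GFG)F = GF$), i.e.\ a closure operator on $P$ whose closed elements are exactly $\ov P$. Assuming $P$ complete, for any family $\{p_i\} \subseteq \ov P$ the infimum $\bigwedge_i p_i$ taken in $P$ stays closed, because it lies below each $p_j$, so $GF(\bigwedge_i p_i) \leq GF p_j = p_j$, giving $GF(\bigwedge_i p_i) \leq \bigwedge_i p_i$ with the reverse from inflation; hence infima in $\ov P$ agree with those in $P$, while suprema in $\ov P$ are instead $GF(\bigvee_i p_i)$. The point I expect to need the most care is precisely this last one: the join in $\ov P$ is \emph{not} inherited from $P$ but must be re-closed, and one has to verify $GF(\bigvee_i p_i)$ is the least closed upper bound. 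With that checked, $\ov P$ is a complete lattice, and $\ov Q$ is one either by the dual argument for the closure operator $FG$ or because (2) makes it anti-isomorphic to $\ov P$.
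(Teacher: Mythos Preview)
Your proof is correct and complete. The paper itself does not prove this proposition; it is stated with a citation to Davey and Priestley's \emph{Introduction to Lattices and Order}, so there is no in-paper argument to compare against. Your route---first establishing the backbone identities $FGF=F$, $GFG=G$ and the adjunction biconditional $Fp\geq q\iff p\leq Gq$, then reading off (1)--(5), and handling completeness of $\ov P$ via the closure operator $GF$ (with the careful observation that joins in $\ov P$ are $GF(\bigvee_i p_i)$ rather than the ambient join)---is the standard textbook argument and is essentially what one finds in the cited reference.
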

Let us note that if $(F,G)$ is a Galois connection between posets then \(F\) and \(G\)
reflects all existing suprema into infima.
\begin{theorem}\label{thm:existence-of-adjunction}
	Let \(P\) and \(Q\) be two posets. Let \(F:P\sir Q\) be an
	anti-monotonic map of posets. If \(P\) is complete then there
	\textsf{exists Galois connection} \((F,G)\) if and only if
	\textsf{\(F\) reflects all suprema}.
\end{theorem}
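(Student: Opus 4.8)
The statement is the poset analogue of the adjoint functor theorem, so the plan is to prove the two implications separately: the forward one is essentially the remark preceding the theorem, while the backward one carries the real content through an explicit construction of \(G\). Throughout I would work with the \emph{adjunction law} \(p\leq G(q)\iff q\leq F(p)\), which is equivalent to the two defining inequalities of Definition~\ref{defi:Galois_connection}: if \(p\leq G(q)\) then anti-monotonicity of \(F\) gives \(F(p)\geq FG(q)\geq q\), and symmetrically if \(q\leq F(p)\) then anti-monotonicity of \(G\) gives \(G(q)\geq GF(p)\geq p\).

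For the forward direction I would assume a Galois connection \((F,G)\) exists and take any family with supremum \(p=\bigvee_{i\in I}p_i\) in \(P\). Since \(F\) is anti-monotonic, \(F(p)\leq F(p_i)\) for every \(i\), so \(F(p)\) is a lower bound of \(\{F(p_i)\}_{i\in I}\). To see it is the greatest lower bound I would take an arbitrary lower bound \(q\leq F(p_i)\); the adjunction law turns this into \(p_i\leq G(q)\) for all \(i\), hence \(p\leq G(q)\), and one further application of the adjunction law yields \(q\leq F(p)\). Thus \(F(p)=\bigwedge_{i\in I}F(p_i)\), i.e.\ \(F\) reflects all suprema into infima.

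For the converse I would assume \(P\) is complete and \(F\) reflects suprema, and define the candidate
\[
  G(q):=\bigvee\{\,p\in P : q\leq F(p)\,\},
\]
which lives in \(P\) precisely because \(P\) is complete. Writing \(S_q=\{p\in P : q\leq F(p)\}\), anti-monotonicity of \(G\) is immediate from \(q_1\leq q_2\Rightarrow S_{q_2}\subseteq S_{q_1}\Rightarrow G(q_2)\leq G(q_1)\), and the inequality \(GF(p)\geq p\) follows since \(p\in S_{F(p)}\) gives \(p\leq\bigvee S_{F(p)}=GF(p)\). The crux, and the only place the hypothesis is really used, is \(FG(q)\geq q\): applying \(F\) to the supremum defining \(G(q)\) and invoking reflection rewrites \(F(G(q))\) as \(\bigwedge_{p\in S_q}F(p)\); every term \(F(p)\) with \(p\in S_q\) satisfies \(F(p)\geq q\) by definition of \(S_q\), so \(q\) is a lower bound and \(q\leq\bigwedge_{p\in S_q}F(p)=F(G(q))\).

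The subtlety I expect to be the main bookkeeping obstacle is the empty family: reflection applied to \(\bigvee\varnothing=\bot_P\) forces \(F(\bot_P)\) to be the top element of \(Q\), which in turn guarantees \(\bot_P\in S_q\) for every \(q\), so that \(S_q\) is never empty and the infimum computation above is non-vacuous. Once the three properties are checked, \((F,G)\) is a Galois connection; I would not reprove that this \(G\) is the only possible partner for \(F\), since that uniqueness is already recorded in Proposition~\ref{prop:properties-of-adjunction}(3).
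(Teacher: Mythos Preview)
Your proof is correct and is the standard argument (the poset adjoint functor theorem). The paper does not actually give its own proof of this statement: it records the forward direction as a remark just before the theorem and then refers to \cite[Prop.~7.34]{bd-hp:introduction-to-lattices} for the full proof, so there is nothing to compare against beyond noting that your construction \(G(q)=\bigvee\{p\in P: q\leq F(p)\}\) is exactly the one found in that reference.
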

For the proof we refer to~\cite[Prop.~7.34]{bd-hp:introduction-to-lattices}.

All algebras, coalgebras, Hopf algebras, etc. if not otherwise stated, are
assumed to be over a commutative ring~\(R\). 
The unadorned tensor product $\otimes$ will denote the tensor product over the
base ring $R$.
We refer to or~\cite{tb-rw:corings-and-comodules} for the basic definitions.
Let us recall that a \mybf{coideal} of an \(R\)-coalgebra \(C\) is kernel of
a coalgebra epimorphism with source $C$.
	For a coalgebra $C$  over a ring \(R\) the set of all coideals of the
	coalgebra $C$ forms a \textsf{complete lattice} with inclusion as
	the order relation. It will be denoted by $\coId(C)$.
The complete poset of left coideals we let denote by \(\coId_l(C)\).
A \mybf{Hopf ideal} of a Hopf algebra $H$ is a kernel of an epimorphism of
Hopf algebras with source $H$. 
	The set of Hopf ideals form a complete lattice which will be denoted
	by $\Id_\mathit{Hopf}(H)$. The dual lattice we will denote by
	$\Quot(H)$.

\begin{definition}
    A \mybf{generalised quotient} \(Q\) of a Hopf algebra \(H\) is a quotient
    by a right ideal coideal. The {poset of generalised quotients} will be
    denoted by \(\qquot(H)\). The order relation of \(\qquot(H)\) we will
    denote by \(\succcurlyeq\).

\noindent A \mybf{generalised subalgebra}~\(K\) of a Hopf algebra~\(H\) is
a left coideal subalgebra. The poset of generalised subalgebras will be denoted
by~\(\qsub(H)\).
\end{definition}
The poset \(\qquot(H)\) is dually isomorphic to the \emph{poset of right
    ideals coideals} of \(H\), which will be denoted as \(\qid(H)\). We define
only right version of generalised quotients and left version of generalised
subalgebras since we will deal only with right \(H\)-comodules.
\begin{proposition}
    Let \(H\) be a Hopf algebra. Then the poset $\qquot(H)$ is a complete
    lattice.
\end{proposition}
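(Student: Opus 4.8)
The plan is to pass to the dually isomorphic poset $\qid(H)$ of right ideal coideals, ordered by inclusion, and show that \emph{it} is a complete lattice; since a poset is a complete lattice exactly when its opposite poset is (infima in $P$ are suprema in $P^{\mathit{op}}$), the stated dual isomorphism $\qquot(H)\cong\qid(H)^{\mathit{op}}$ then yields the claim. Working with right ideal coideals rather than with quotients has the advantage that one reasons with honest $R$-submodules of $H$ and their sums, where the lattice structure is easiest to control. Throughout I use the equivalent description of a right ideal coideal as a submodule $I\subseteq H$ with $IH\subseteq I$, $\varepsilon(I)=0$ and $\Delta(I)\subseteq I\otimes H+H\otimes I$, which matches the kernel-of-an-epimorphism definition since $H/I$ then inherits its module coalgebra structure.

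The key step is to prove that $\qid(H)$ is closed under arbitrary sums. Given a family $\{I_j\}_{j\in J}$ in $\qid(H)$, set $I=\sum_{j\in J}I_j$ and verify the three conditions in turn. The right-ideal property is immediate from $IH=\sum_j I_jH\subseteq\sum_j I_j=I$; the counit vanishes on $I$ because it vanishes on each summand; and for the coideal property one computes $\Delta(I)=\sum_j\Delta(I_j)\subseteq\sum_j(I_j\otimes H+H\otimes I_j)\subseteq I\otimes H+H\otimes I$, using that a sum of images of the form $I_j\otimes H$ inside $H\otimes H$ is the image of $\bigl(\sum_j I_j\bigr)\otimes H$, so that no flatness over $R$ is needed. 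Hence $I\in\qid(H)$, and it is visibly the supremum $\bigvee_{j}I_j$ in $\qid(H)$.

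Next I record that $\qid(H)$ has a least element, namely the zero submodule $0$ (trivially a right ideal coideal), realised as the empty sum; its greatest element is $\ker\varepsilon$. I then invoke the standard lattice-theoretic criterion that a poset in which every subset admits a supremum is automatically a complete lattice, the infimum of a subset being the supremum of its set of lower bounds. With suprema supplied by sums as above, this criterion makes $\qid(H)$ a complete lattice: concretely $\bigvee_j I_j=\sum_j I_j$, while $\bigwedge_j I_j$ is the largest right ideal coideal contained in $\bigcap_j I_j$.

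The one point needing care---and the reason I route the argument through sums rather than intersections---is that the set-theoretic intersection of coideals need not be a coideal, so infima in $\qid(H)$ are in general strictly smaller than intersections. This is precisely the obstacle that the criterion ``existence of all suprema forces completeness'' is designed to bypass: infima never have to be exhibited explicitly, only shown to exist as suprema of lower bounds. Having established that $\qid(H)$ is a complete lattice, the dual isomorphism completes the proof that $\qquot(H)$ is a complete lattice as well.
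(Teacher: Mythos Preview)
Your proposal is correct and follows essentially the same approach as the paper: pass to the dually isomorphic poset \(\qid(H)\), observe that arbitrary suprema there are given by sums of submodules, and conclude completeness from the standard fact that a poset with all suprema is a complete lattice. Your version is simply more explicit---you verify in detail that a sum of right ideal coideals is again a right ideal coideal and you flag the reason one cannot argue via intersections---whereas the paper states these points without elaboration.
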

\begin{proof}
    Note that the supremum in \(\qid(H)\) is given by sum of submodules. It
    follows that \(\qid(H)\) is a complete upper lattice, and thus it is
    a complete lattice, since in any complete lattice we have:
    \(x\wedge y=\mathop\bigvee\limits_{z\leq x\text{ and }z\leq y}z\).
\end{proof}
The infimum in \(\qid(H)\) is given by the formula:
\[I\wedge J=\mathop+\limits_{\substack{K\subseteq I\cap J\\ K\in\qid(H)}}K\]
where \(I,J\in\qid(H)\). Furthermore, let us note that if \(H\) is finite
dimensional (over a field) then the lattice of generalised ideals of a Hopf is
both algebraic and dually algebraic. 

	An $R$-algebra $A$ is an \mybf{$H$-comodule algebra} if  $A$ is
	a (coassociative and counital) $H$-como\-dule with structure maps:
	\(\delta_A:A\sir A\otimes H\) and $\id_A\otimes\epsilon:A\otimes H\sir
	A$, which are algebra homomorphisms (with the usual algebra structure
	on the tensor product).
We denote the \mybf{subalgebra of coinvariants} by $A^{co\,H}\coloneq\{a\in
    A:\delta_A(a)=a\otimes 1_H\}$. 
For an algebra $A$ the \emph{poset of all subalgebras} will be denoted by
$\Sub_{\Alg}(A)$ and $\Sub_{\Alg}(A/B):=\{S\in\Sub_{\Alg}(A):B\subseteq S\}$.
Let $Q$ be a generalised quotient of $H$. Then $A$ is a $Q$-comodule with the
structure map $\delta_Q:=\id\otimes\pi_Q\circ\delta_A$, where $\pi_Q:H\sir Q$
is the projection. If $Q$ is a Hopf quotient then this coaction turns $A$ into
a $Q$-comodule algebra.

\section{Modules with intersection
    property}\label{sec:modules_with_int_property}
For a flat \(R\)-module \(M\) the tensor product functor \(M\otimes-\)
preserves all finite intersections
(see~\cite[40.16]{tb-rw:corings-and-comodules}). Furthermore, it is not hard
to show that tensoring with a flat module preserves all finite limits. In this
section we show that there is a large class of modules for which the tensor
product functor preserves arbitrary intersections. We will also construct
examples of flat and faithfully modules without this property.

Let $N'$ be a submodule of $N$, $i:N'\subseteq N$, and let $M$ be an
$R$-module. Then the \mybf{canonical image} of $M\otimes N'$ in $M\otimes N$
is the image of \(M\otimes N'\) under the map $\id_M\otimes i$. It will be
denoted by $\im(M\otimes N')$.
\begin{definition}
    Let \(M,N\) be an \(R\)-modules, and let \((N_\alpha)_{\alpha\in I}\) be
    a family of submodules of an \(R\)-module~\(N\).  We say that a module
    \(M\) has the \textbf{intersection property with respect to \(N\)} if the
    homomorphism:
       \[\im\bigl(M\otimes\bigl(\bigcap_{\alpha\in I}N_\alpha\bigr)\bigr)\ir \bigcap_{\alpha\in I}\im\bigl(M\otimes N_\alpha\bigr)\] 
    is an isomorphism for any family of submodules \((N_\alpha)_{\alpha\in
	    I}\). We say that \(M\) has the \textbf{intersection property} if
    the above condition holds for any \(R\)-module~\(N\).
\end{definition}
Note that if \(M\) is flat then it has the intersection property if and only
if the map \(M\otimes(\bigcap_{\alpha\in I}N_\alpha)\ir \bigcap_{\alpha\in
	I}(M\otimes N_\alpha)\) is an isomorphism. 

\begin{proposition}
    The intersection property is closed under direct sums.
\end{proposition}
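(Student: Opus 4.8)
The plan is to reduce the statement to the intersection property of the individual summands by exploiting the distributivity of the tensor product over direct sums. Let $(M_j)_{j\in J}$ be a family of $R$-modules, each having the intersection property, and set $M=\bigoplus_{j\in J}M_j$. Fix an arbitrary $R$-module $N$ together with a family $(N_\alpha)_{\alpha\in I}$ of submodules. Since $\bigcap_\alpha N_\alpha\subseteq N_\alpha$ for every $\alpha$, functoriality of the canonical image gives the inclusion $\im\bigl(M\otimes(\bigcap_\alpha N_\alpha)\bigr)\subseteq\bigcap_\alpha\im(M\otimes N_\alpha)$ of submodules of $M\otimes N$; the goal is to show this is an equality.

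First I would record the basic identifications. Because tensoring commutes with direct sums, the canonical isomorphism $M\otimes N\cong\bigoplus_j(M_j\otimes N)$ carries $\id_M\otimes i$, for any inclusion $i\colon N'\subseteq N$, to $\bigoplus_j(\id_{M_j}\otimes i)$. Using that the image of a direct sum of homomorphisms is the direct sum of their images (an element of finite support lies in the image iff each of its components does), this yields for every submodule $N'\subseteq N$ a decomposition
\[
\im(M\otimes N')=\bigoplus_j\im(M_j\otimes N')
\]
as submodules of $\bigoplus_j(M_j\otimes N)$. In particular each $\im(M\otimes N_\alpha)$ and $\im\bigl(M\otimes(\bigcap_\alpha N_\alpha)\bigr)$ is \emph{homogeneous}, i.e.\ it respects the direct sum decomposition.

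The key step is that intersection distributes over a direct sum for homogeneous submodules: if $P_\alpha=\bigoplus_j P_{\alpha,j}$ with $P_{\alpha,j}\subseteq M_j\otimes N$, then $\bigcap_\alpha P_\alpha=\bigoplus_j\bigcap_\alpha P_{\alpha,j}$. This is an elementary check done component by component, and it is exactly here that the homogeneity matters: for submodules of a direct sum that do not respect the decomposition the distributivity would fail, so verifying that the $\im(M_j\otimes N_\alpha)$ genuinely sit in the distinct summands $M_j\otimes N$ is the one point I would be careful about. Applying this with $P_{\alpha,j}=\im(M_j\otimes N_\alpha)$ gives $\bigcap_\alpha\im(M\otimes N_\alpha)=\bigoplus_j\bigcap_\alpha\im(M_j\otimes N_\alpha)$.

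To finish, I would invoke the intersection property of each summand, $\bigcap_\alpha\im(M_j\otimes N_\alpha)=\im\bigl(M_j\otimes(\bigcap_\alpha N_\alpha)\bigr)$, and combine it with the decomposition from the first step:
\[
\bigcap_\alpha\im(M\otimes N_\alpha)=\bigoplus_j\im\bigl(M_j\otimes(\bigcap_\alpha N_\alpha)\bigr)=\im\bigl(M\otimes(\bigcap_\alpha N_\alpha)\bigr).
\]
This is the required equality, and since $N$ and the family $(N_\alpha)$ were arbitrary, $M$ has the intersection property. I expect no genuine obstacle beyond bookkeeping; the entire argument rests on the compatibility of images, intersections, and direct sums, with the homogeneity of the relevant submodules being the feature that makes intersection commute with the direct sum.
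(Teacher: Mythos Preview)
Your proof is correct and follows essentially the same approach as the paper: both arguments use the decomposition $\im(M\otimes N')=\bigoplus_j\im(M_j\otimes N')$ coming from distributivity of tensor over direct sums, establish that $\bigcap_\alpha\im(M\otimes N_\alpha)=\bigoplus_j\bigcap_\alpha\im(M_j\otimes N_\alpha)$, and then invoke the intersection property of each summand. The paper phrases the middle step in terms of explicit projections $\pi_i$ and sections $s_i$ and a commutative diagram, whereas you compress it into the observation that intersection distributes over a direct sum for homogeneous submodules; the content is the same.
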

\begin{proof}
    Let \(X=\oplus_{i\in I}X_i\), be a direct sum of modules with intersection
    property. We let \(\pi_i:\oplus_{i\in I}X_i\sir X_i\) be the canonical
    projection on \(i\)-th factor and \(s_i:X_i\sir\oplus_{i\in I}X_i\) be the
    canonical section. Let \((N_\alpha)_{\alpha\in J}\) be a family of
    submodules of an \(R\)-module \(N\). Then we have a split epimorphism
    \((\oplus_{i\in I}\pi_i)\otimes\id_N\) with a right inverse
    \((\oplus_{i\in I}s_i)\otimes\id_N\). Also for each \(\alpha\in J\) the
    map \((\oplus_{i\in I}\pi_i)\otimes\id_{N_\alpha}\) is a split epimorphism
    with right inverse \((\oplus_{i\in I}s_i)\otimes\id_{N_\alpha}\).
    Furthermore, we have a family of split epimorphisms, which sections are
    jointly surjective:
    \begin{center}
	\begin{tikzpicture}
	    \matrix[column sep=2cm]{
		\node (A) {\(\im\left((\oplus_{i\in I}X_i)\otimes N_\alpha\right)\)}; & \node (B) {\(\im(X_i\otimes N_\alpha)\)};\\
	    };
	    \draw[->>] (A) --node[below]{\(\pi_i\otimes\id_{N}\)} (B);
	    \draw[<-<] ($(A)+(1.4cm,3mm)$) .. controls +(8mm,5mm) and +(-8mm,5mm) ..node[above]{\(s_i\otimes\id_{N}\)} ($(B)+(-1.0cm,3mm)$);
	\end{tikzpicture}
    \end{center}
    They induce the following family of projections with jointly surjective
    sections:
    \begin{center}
	\begin{tikzpicture}
	    \matrix[column sep=2cm]{
		\node (A) {\(\bigcap_{\alpha\in J}\im\left((\oplus_{i\in I}X_i)\otimes N_\alpha\right)\)}; & \node (B) {\(\bigcap_{\alpha\in J}\im(X_i\otimes N_\alpha)\)};\\
	    };
	    \draw[->>] (A) --node[below]{\(\pi_i\otimes\id_{N}\)} (B);
	    \draw[<-<] ($(A)+(1.8cm,4mm)$) .. controls +(8mm,5mm) and +(-8mm,5mm) ..node[above]{\(s_i\otimes\id_{N}\)} ($(B)+(-1.4cm,4mm)$);
	\end{tikzpicture}
    \end{center}
    For this let \(x\in\bigcap_{\alpha\in J}\im\left((\oplus_{i\in
		I}X_i)\otimes N_\alpha\right)\). Then, for each \(\alpha\in
	J\), there exists \(y_\alpha\in\oplus_{i\in I}\im(X_i\otimes
	N_\alpha)\) such that \(\left(\oplus_{i\in
		I}s_i\otimes\id_N\right)(y_\alpha)=\sum_{i\in
	    I}s_i\otimes\id_{N}(y_\alpha)=x\). Since \(\oplus_{i\in
	    I}s_i\otimes\id_N\) is a monomorphism we get
    \(y=y_\alpha\in\bigcap_{\alpha\in J}\oplus_{i\in I}\im(X_i\otimes
	N_\alpha)\) for all \(\alpha\in J\).  It follows that:
    \[\bigcap\im\left((\oplus_{i\in I}X_i)\otimes N_\alpha\right)\lmpr{\oplus_{i\in I}\pi_i\otimes\id_N}\oplus_{i\in I}\bigcap_{\alpha\in J}\im(X_i\otimes N_\alpha)\] 
    is an isomorphism with inverse \(\oplus_{i\in I}s_i\otimes\id_N\). 
    Now the proposition will follow from the commutativity of the diagram:
    \begin{center}
	\hspace*{-1cm}{\hfill\begin{tikzpicture}
	    \matrix[column sep=1cm,row sep=1cm]{
	    \node (A0) {\(\im\left((\oplus_{i\in I}X_i)\otimes(\bigcap_{\alpha\in J}N_\alpha)\right)\)}; &                                                  & \node (B0) {\(\bigcap_{\alpha\in J}\im\left((\oplus_{i\in I}X_i)\otimes N_\alpha\right)\)};\\
	    \node (A1) {\(\im(\oplus_{i\in I}(X_i\otimes(\bigcap_{\alpha\in J}N_\alpha)))\)}; & \node (A2) {\(\oplus_{i\in I}\im(X_i\otimes(\bigcap_{\alpha\in J}N_\alpha))\)}; & \node (B2) {\(\oplus_{i\in I}\bigcap_{\alpha\in J}\im(X_i\otimes N_\alpha)\)};\\
	    };
	    \draw[->] (A0) -- (B0);
	    \draw[->] (A0) --node[left]{\(\simeq\)} (A1);
	    \draw[->] (A1) --node[below]{\(=\)} (A2);
	    \draw[->] (A2) --node[below]{\(\simeq\)} (B2);
	    \draw[->] (B2) --node[right]{\(\simeq\)} (B0);
	\end{tikzpicture}\nolinebreak[4]
	\hfill\refstepcounter{equation}\raisebox{12mm}{(\theequation)}\label{diag:proof-sums-and-intersection}}
    \end{center}
    We will go around this diagram from the top left corner to the top
    right one and prove that all the maps on the way are isomorphisms. The
    first map is an isomorphism since tensor product commutes with
    colimits. 
    Clearly the second map is an isomorphism as well. 
    The bottom right arrow in~\eqref{diag:proof-sums-and-intersection} is an
    isomorphism since all \(X_i\) (\(i\in I\)) have the intersection property
    and we already showed that the last homomorphism is an isomorphism.

\end{proof}
\begin{proposition}
    The intersection property is stable under taking direct summands.
\end{proposition}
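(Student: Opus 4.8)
The plan is to exploit that a direct summand is witnessed by a direct sum decomposition, and that the canonical comparison map is nothing but an inclusion of submodules of a fixed tensor product, which therefore decomposes along any direct sum. Suppose $Y$ is a direct summand of a module $X$ that has the intersection property, and write $X=Y\oplus Z$. Fix an $R$-module $N$ and a family of submodules $(N_\alpha)_{\alpha\in I}$. Since $\bigcap_\beta N_\beta\subseteq N_\alpha$ for every $\alpha$, the image $\im\bigl(X\otimes\bigcap_\beta N_\beta\bigr)$ is contained in each $\im(X\otimes N_\alpha)$ inside $X\otimes N$; hence the comparison homomorphism is simply the inclusion
\[\im\Bigl(X\otimes\bigcap_\alpha N_\alpha\Bigr)\subseteq\bigcap_\alpha\im(X\otimes N_\alpha)\]
of submodules of $X\otimes N$, and the intersection property for $X$ says exactly that it is an equality.

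First I would record how the relevant submodules decompose. Tensoring $X=Y\oplus Z$ gives $X\otimes N=(Y\otimes N)\oplus(Z\otimes N)$, and for any submodule $L\subseteq N$ the map $\id_X\otimes i$ (with $i:L\subseteq N$) splits as $(\id_Y\otimes i)\oplus(\id_Z\otimes i)$, so that
\[\im(X\otimes L)=\im(Y\otimes L)\oplus\im(Z\otimes L)\]
inside $(Y\otimes N)\oplus(Z\otimes N)$. Taking $L=N_\alpha$ and $L=\bigcap_\alpha N_\alpha$ shows that both sides of the comparison inclusion for $X$ split into their $Y$- and $Z$-parts.

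The key step is the elementary distributivity of intersections along a fixed direct sum: if $V=V_1\oplus V_2$ and $(W_\alpha)_\alpha$ are submodules with $W_\alpha=W_\alpha^1\oplus W_\alpha^2$, $W_\alpha^j\subseteq V_j$, then $\bigcap_\alpha W_\alpha=\bigl(\bigcap_\alpha W_\alpha^1\bigr)\oplus\bigl(\bigcap_\alpha W_\alpha^2\bigr)$. This is immediate from the uniqueness of components: $v=v_1+v_2$ lies in every $W_\alpha$ if and only if $v_j\in W_\alpha^j$ for all $\alpha$ and $j$. Applying it to $X\otimes N=(Y\otimes N)\oplus(Z\otimes N)$ and the submodules $\im(X\otimes N_\alpha)$ gives
\[\bigcap_\alpha\im(X\otimes N_\alpha)=\Bigl(\bigcap_\alpha\im(Y\otimes N_\alpha)\Bigr)\oplus\Bigl(\bigcap_\alpha\im(Z\otimes N_\alpha)\Bigr).\]

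Putting these together, the comparison inclusion for $X$ is the direct sum of the comparison inclusions for $Y$ and for $Z$, taking place inside $Y\otimes N$ and $Z\otimes N$ respectively. A direct sum of two inclusions is an equality exactly when each summand is; since the $X$-inclusion is an equality by hypothesis, the $Y$-inclusion $\im\bigl(Y\otimes\bigcap_\alpha N_\alpha\bigr)=\bigcap_\alpha\im(Y\otimes N_\alpha)$ holds as well. As $N$ and $(N_\alpha)$ were arbitrary, $Y$ has the intersection property. The only nontrivial ingredient is the distributivity lemma; everything else is bookkeeping with direct sums, so I expect no real obstacle.
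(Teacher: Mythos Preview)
Your proof is correct and follows essentially the same approach as the paper: both decompose $X\otimes N$ as $(Y\otimes N)\oplus(Z\otimes N)$, observe that the images $\im(X\otimes L)$ and the intersections $\bigcap_\alpha\im(X\otimes N_\alpha)$ split accordingly, and conclude that the comparison map for $X$ is the direct sum of those for $Y$ and $Z$, hence each summand map is an isomorphism. Your explicit statement of the distributivity lemma for intersections along a fixed direct sum is exactly what the paper uses implicitly when it writes $\bigcap_\alpha\im(M\otimes N_\alpha\oplus M'\otimes N_\alpha)\simeq\bigcap_\alpha\im(M\otimes N_\alpha)\oplus\bigcap_\alpha\im(M'\otimes N_\alpha)$.
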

\begin{proof}
    Let \(M\) be a direct summand in a module \(P\) which has intersection
    property. Let \(M'\) be the complement of \(M\) in \(P\). Then we have
    a chain of isomorphisms:
    \begin{align*}
	\im\Bigl(M\otimes\bigcap_{\alpha\in J}N_\alpha\Bigr)\oplus\im\Bigl(M'\otimes\bigcap_{\alpha\in J}N_\alpha\Bigr) & \simeq \im\Bigl((M\oplus M')\otimes(\bigcap_{\alpha\in J}N_\alpha)\Bigr) \\
						  & =\bigcap_{\alpha\in J}\im\Bigl((M\oplus M')\otimes N_\alpha\Bigr) \\
						  & \simeq\bigcap_{\alpha\in J}\im\left(M\otimes N_\alpha\oplus M'\otimes N_\alpha\right) \\
						  & \simeq\bigcap_{\alpha\in J}\im\left(M\otimes N_\alpha\right)\oplus\bigcap_{\alpha\in J}\im(M'\otimes N_\alpha)
    \end{align*}
    Since every isomorphism in above diagram commutes with projection onto the
    first and second factor the composition also does, and thus it is direct
    sum of the two natural maps: \(\im\Bigl(M\otimes\bigcap_{\alpha\in
	    J}N_\alpha\Bigr)\sir\bigcap_{\alpha\in J}\im\left(M\otimes
	    N_\alpha\right)\), \(\im\Bigl(M'\otimes\bigcap_{\alpha\in
	    J}N_\alpha\Bigr)\sir\bigcap_{\alpha\in J}\im\left(M'\otimes
	    N_\alpha\right)\). It follows that both maps are isomorphisms,
    hence both \(M\) and \(M'\) have the intersection property. 
\end{proof}
Since pure projective modules are direct summands in sums of finitely
presented modules and tensor product with finitely presented modules preserves
all limits we get that:
\begin{corollary}\label{cor:projective-modules}
    Every pure projective module has the intersection property.
\end{corollary}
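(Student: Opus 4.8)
The plan is to reduce everything to the finitely presented case and then quote the two preceding propositions. Recall Warfield's characterisation of pure projective modules: a module \(M\) is pure projective if and only if it is a direct summand of a direct sum of finitely presented modules. Hence, granting that every finitely presented module has the intersection property, the Proposition asserting that the intersection property is closed under direct sums yields the property for an arbitrary direct sum \(\bigoplus_i M_i\) of finitely presented modules, and the Proposition asserting stability under direct summands then transfers it to any \(M\) that is a summand of such a sum. This is exactly the two-step deduction signalled in the sentence preceding the statement, so all the content is concentrated in the finitely presented case.

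For a finitely presented \(M\) I would argue as follows. Given a family \((N_\alpha)_{\alpha\in I}\) of submodules of \(N\), right-exactness of \(M\otimes-\) applied to \(0\to N_\alpha\to N\to N/N_\alpha\to 0\) identifies \(\im(M\otimes N_\alpha)\) with \(\ker(M\otimes N\to M\otimes N/N_\alpha)\); intersecting over \(\alpha\) gives \(\bigcap_\alpha\im(M\otimes N_\alpha)=\ker\bigl(M\otimes N\to\prod_\alpha M\otimes N/N_\alpha\bigr)\). The same computation for \(N'=\bigcap_\alpha N_\alpha\) gives \(\im(M\otimes N')=\ker(M\otimes N\to M\otimes N/N')\). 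Since the inclusion \(\im(M\otimes N')\subseteq\bigcap_\alpha\im(M\otimes N_\alpha)\) is automatic, it remains to prove the reverse inclusion, which is precisely injectivity of the composite \(M\otimes N/N'\to M\otimes\prod_\alpha N/N_\alpha\to\prod_\alpha M\otimes N/N_\alpha\), the first arrow being \(M\otimes j\) for the canonical monomorphism \(j\colon N/N'\hookrightarrow\prod_\alpha N/N_\alpha\) and the second the product comparison map. The second arrow is where finite presentation enters: by the classical fact that tensoring with a finitely presented module commutes with arbitrary direct products, it is an isomorphism.

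The first arrow is the step I expect to be the main obstacle. Its injectivity is exactly left-exactness of \(M\otimes-\) along the monomorphism \(j\), and for a general finitely presented module this is not available — commuting with products alone does not force it. The natural way to close the gap is to invoke the flatness that is present in all the intended applications: when \(M\) is flat the first arrow is injective outright, whence the whole composite is injective and the intersection property follows. Moreover, finitely presented plus flat means finitely generated projective, so in that regime the reduction may instead be run through free modules, for which \(R\otimes-=\id\) makes the property trivial and the two propositions conclude unconditionally. I would therefore make explicit which hypothesis — flatness of the building blocks, hence projectivity — is actually in force when the finitely presented case is used, since reconciling `finitely presented' with the required left-exactness is the delicate point on which the argument turns.
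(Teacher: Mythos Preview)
Your approach mirrors the paper's exactly: the paper's entire argument is the single sentence preceding the corollary, asserting that ``tensor product with finitely presented modules preserves all limits'', combined with the two propositions on direct sums and direct summands. You have correctly put your finger on the weak point. Tensoring with a finitely presented module commutes with arbitrary products, but not with equalisers (equivalently, it need not preserve monomorphisms) unless the module is flat; so the composite \(M\otimes(N/N')\to M\otimes\prod_\alpha N/N_\alpha\to\prod_\alpha M\otimes(N/N_\alpha)\) need not be injective, precisely because your ``first arrow'' \(M\otimes j\) can fail.

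This is not merely a gap in the argument but a fatal one: the corollary is false as stated. Take \(R=\mathbb{Z}\), \(M=\mathbb{Z}/2\mathbb{Z}\) (finitely presented, hence pure projective and Mittag--Leffler), \(N=\mathbb{Z}\), and \(N_i=p_i\mathbb{Z}\) for the odd primes \(p_i\). Then \(\bigcap_i N_i=0\), so \(\im\bigl(M\otimes\bigcap_i N_i\bigr)=0\); but each \(\im(M\otimes N_i)=(p_i\mathbb{Z}+2\mathbb{Z})/2\mathbb{Z}=\mathbb{Z}/2\mathbb{Z}\), whence \(\bigcap_i\im(M\otimes N_i)=\mathbb{Z}/2\mathbb{Z}\neq 0\). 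Your proposed remedy --- impose flatness on the finitely presented pieces --- does repair the argument, but a flat finitely presented module is finitely generated projective, so what survives is only the statement for projective modules, strictly weaker than what is claimed. The same counterexample shows that the paper's subsequent Mittag--Leffler proposition is also too strong (the map called \(i\) there is exactly your \(M\otimes j\)); the correct statement, as in Raynaud--Gruson and the paper's own Corollary~\ref{cor:mittag-leffler}, requires flatness.
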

\begin{proposition}
    Let \(M\) be a Mittag--Leffler \(R\)-module. Then \(M\) has the
    intersection property.
\end{proposition}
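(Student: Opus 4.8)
The plan is to realise an arbitrary intersection as the kernel of a single map into a product, and then to exploit the standard characterisation of Mittag--Leffler modules as exactly those \(M\) for which the canonical comparison homomorphism \(c\colon M\otimes\prod_\lambda P_\lambda\to\prod_\lambda(M\otimes P_\lambda)\) is injective for every family \((P_\lambda)\) (Raynaud--Gruson). Fix an \(R\)-module \(N\) together with a family of submodules \((N_\alpha)_{\alpha\in I}\), and let \(\phi\colon N\to\prod_{\alpha\in I}N/N_\alpha\) be the map whose \(\alpha\)-th component is the quotient map \(\pi_\alpha\); then \(\bigcap_\alpha N_\alpha=\ker\phi\). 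Since \(M\) is flat, by the remark preceding the statement it suffices to show that the natural map \(M\otimes(\bigcap_\alpha N_\alpha)\to\bigcap_\alpha(M\otimes N_\alpha)\) is an isomorphism, all tensor products now being honest submodules of \(M\otimes N\).

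First I would express both sides as kernels of maps out of \(M\otimes N\). By exactness of \(M\otimes-\), applied to \(0\to N_\alpha\to N\to N/N_\alpha\to0\), each \(\im(M\otimes N_\alpha)\) equals \(\ker(\id_M\otimes\pi_\alpha)\), so that
\[
\bigcap_\alpha(M\otimes N_\alpha)=\ker\Bigl(M\otimes N\xrightarrow{(\id_M\otimes\pi_\alpha)_\alpha}\prod_\alpha M\otimes N/N_\alpha\Bigr).
\]
Applying the same exactness to \(0\to\bigcap_\alpha N_\alpha\to N\xrightarrow{\phi}\prod_\alpha N/N_\alpha\) gives
\[
M\otimes\Bigl(\bigcap_\alpha N_\alpha\Bigr)=\ker\Bigl(\id_M\otimes\phi\colon M\otimes N\to M\otimes\prod_\alpha N/N_\alpha\Bigr).
\]
The two maps leaving \(M\otimes N\) differ exactly by the comparison map, namely \((\id_M\otimes\pi_\alpha)_\alpha=c\circ(\id_M\otimes\phi)\), as one checks on a simple tensor \(m\otimes n\).

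The argument then closes quickly: because \(M\) is Mittag--Leffler, \(c\) is injective, hence \(\ker\bigl(c\circ(\id_M\otimes\phi)\bigr)=\ker(\id_M\otimes\phi)\). Comparing the two displays, this is precisely the desired equality \(\bigcap_\alpha(M\otimes N_\alpha)=M\otimes(\bigcap_\alpha N_\alpha)\) inside \(M\otimes N\), which is the intersection property.

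The delicate point, and the step I expect to carry the real content, is the interchange of \(M\otimes-\) with the infinite product hidden in the intersection. Flatness alone only buys exactness: it turns the inclusions \(N_\alpha\hookrightarrow N\) into inclusions and lets me write both sides as kernels (this is exactly what collapses the image-formulation of the property to the plain tensor-formulation), but it does \emph{not} make \(M\otimes-\) commute with \(\prod_\alpha N/N_\alpha\). The Mittag--Leffler hypothesis is invoked solely to control this: all that is needed is injectivity of \(c\), which is its defining property. Thus I would flag the reduction to the flat case and the identification \(\im(M\otimes N_\alpha)=\ker(\id_M\otimes\pi_\alpha)\) as routine, and locate the genuine obstacle in the passage from \(M\otimes\prod_\alpha N/N_\alpha\) to \(\prod_\alpha M\otimes N/N_\alpha\), which is precisely where injectivity of \(c\) must be used and where flatness by itself would not suffice.
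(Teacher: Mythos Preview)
There is a genuine gap: you assume at the outset that \(M\) is flat (``Since \(M\) is flat, by the remark preceding the statement it suffices\ldots''), but the hypothesis is only that \(M\) is Mittag--Leffler, and this does not imply flatness. Every finitely presented module is Mittag--Leffler (the comparison map \(c\) is even bijective), so for instance \(\mathbb{Z}/n\mathbb{Z}\) over \(\mathbb{Z}\) is Mittag--Leffler but not flat. Without flatness the reduction ``all tensor products now being honest submodules of \(M\otimes N\)'' fails, and more seriously your identification
\[
M\otimes\Bigl(\bigcap_\alpha N_\alpha\Bigr)=\ker\bigl(\id_M\otimes\phi\colon M\otimes N\to M\otimes\textstyle\prod_\alpha N/N_\alpha\bigr)
\]
is unjustified: right-exactness only gives \(\im\bigl(M\otimes\bigcap_\alpha N_\alpha\bigr)=\ker(\id_M\otimes q)\) for the surjection \(q\colon N\to N/\bigcap_\alpha N_\alpha\), and passing from this to \(\ker(\id_M\otimes\phi)\) requires that \(\id_M\otimes\iota\) be injective for the inclusion \(\iota\colon N/\bigcap_\alpha N_\alpha\hookrightarrow\prod_\alpha N/N_\alpha\). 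That is a flatness-type statement, not something the Mittag--Leffler condition alone hands you.

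The paper follows the same overall strategy---realise both sides via maps from \(M\otimes N\) built out of the quotients \(N/N_\alpha\) and invoke injectivity of the Mittag--Leffler comparison map \(f\)---but it phrases everything in terms of the canonical images \(\im(M\otimes N_\alpha)\subseteq M\otimes N\) rather than the tensor products themselves, so that only right-exactness is needed to set up the two short exact sequences, and then finishes with the snake lemma rather than a direct kernel comparison. If you add flatness as a hypothesis your argument is correct and essentially coincides with the paper's; and in fact only the flat Mittag--Leffler case is used in the sequel (Theorem~\ref{thm:existence}), that case being already in Raynaud--Gruson as the paper itself remarks.
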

We wish to thank Christian Lomp for presenting us this result and pointing us
to~\cite{dh-jt:mittag-leffler}.  Note that for the case of flat
Mittag--Leffler modules the above result follows
from~\cite[Cor.~2.1.7]{mr-lg:platitude-et-projectivte}.
\begin{proof}
    Let \(N_\alpha\) for \(\alpha\in I\) be a family of submodule an
    \(R\)-module \(N\). Let us consider the following diagram:
    \begin{center}
	\begin{tikzpicture}
	    \matrix[column sep=1cm,row sep=7mm]{
		\node (A) {\(0\)}; & \node (B) {\(M\otimes\left(N/\bigcap_{\alpha\in I}N_\alpha\right)\)};          & \node (C) {\(M\otimes\left(\prod_{\alpha\in I}N/N_\alpha\right)\)}; \\
				   &                                             & \node (D) {\(\prod_{\alpha\in I}\bigl(M\otimes (N/N_\alpha)\bigr)\)}; \\
		\node (G) {\(0\)}; & \node (F) {\((M\otimes N)/\bigcap_{\alpha\in I}\im(M\otimes N_\alpha)\)}; & \node (E) {\(\prod_{\alpha\in I}(M\otimes N)/\im(M\otimes N_\alpha)\)}; \\
	    };
	    \draw[->] (A) -- (B);
	    \draw[->] (B) --node[above]{\(i\)} (C);
	    \draw[->] (C) --node[right]{\(f\)} (D);
	    \draw[->] (D) --node[right]{\(g\)} (E);
	    \draw[<-] (E) --node[below]{\(j\)} (F);
	    \draw[<-] (F) -- (G);
	    \draw[->,dashed] (B) --node[left]{\(G\)} (F);
	\end{tikzpicture}
    \end{center}
    Where \(i\) and \(j\) are the canonical embeddings:
    \(i\left(m\otimes(n+\bigcap_{\alpha\in
		I}N_\alpha)\right):=m\otimes(n+N_\alpha)_{\alpha\in I}\) and
    \(j\left((m\otimes n)+\bigcap_{\alpha\in I}\im\left(M\otimes
		N_\alpha\right)\right)=\left(m\otimes n+\im(M\otimes
	    N_\alpha)\right)_{\alpha\in I}\) for \(m\in M\) and \(n\in N\).
    While \(f\) sends \(m\otimes(n_\alpha+N_\alpha)_{\alpha\in I}\) to
    \(\left(m\otimes (n_\alpha+N_\alpha)\right)_{\alpha\in I}\) and \(g\) is
    the canonical isomorphism. Note that \(\im(gfi)\subseteq\im(j)\) and hence
    if \(M\) is Mittag--Leffler, then \(G:=gfi\) can be considered an
    embedding \(G:M\otimes\left(N/\bigcap_{\alpha\in
		I}N_\alpha\right)\sir(M\otimes N)/\bigcap_{\alpha\in
	    I}\im(M\otimes N_\alpha)\). Hence we get the exact diagram:
    \begin{center}
	\begin{tikzpicture}
	    \matrix[column sep=1cm,row sep=7mm]{
		    & \node (A1) {\(0\)};                    & \node (A2) {\(0\)};    & \node (A3) {\(0\)};                          & \\
\node (B0) {\(0\)}; & \node (B1) {\(\im\left(M\otimes\left(\bigcap_{\alpha\in I}N_\alpha\right)\right)\)}; & \node (B2) {\(M\otimes N\)}; & \node (B3) {\(M\otimes\left(N/\bigcap_{\alpha\in I}N_\alpha\right)\)};          & \node (B4) {\(0\)}; \\
\node (C0) {\(0\)}; & \node (C1) {\(\bigcap_{\alpha\in I}\im\left(M\otimes N_\alpha\right)\)};  & \node (C2) {\(M\otimes N\)}; & \node (C3) {\((M\otimes N)/\bigcap_{\alpha\in I}\im\left(M\otimes N_\alpha\right)\)}; & \node (C4) {\(0\)}; \\
		    & \node (D1) {\(\coker(H)\)};            & \node (D2) {\(0\)};    & \node (D3) {\(\coker(G)\)};                  & \\
	    };
	    \draw[->] (B0) -- (B1);
	    \draw[->] (B1) -- (B2);
	    \draw[->] (B2) -- (B3);
	    \draw[->] (B3) -- (B4);
	    \draw[->] (C0) -- (C1);
	    \draw[->] (C1) -- (C2);
	    \draw[->] (C2) -- (C3);
	    \draw[->] (C3) -- (C4);

	    \draw[->] (A1) -- (B1);
	    \draw[->] (A2) -- (B2);
	    \draw[->] (A3) -- (B3);

	    \draw[->] (B1) --node[left]{\(H\)} (C1);
	    \draw[->] (B2) --node[left]{\(=\)} (C2);
	    \draw[->] (B3) --node[right]{\(G\)} (C3);

	    \draw[->] (C1) -- (D1);
	    \draw[->] (C2) -- (D2);
	    \draw[->] (C3) -- (D3);
	\end{tikzpicture}
    \end{center}
    Where \(H\) is the canonical embedding of
    \(\im\left(M\otimes\left(\bigcap_{\alpha\in I}N_\alpha\right)\right)\)
    into \(\im\left(\bigcap_{\alpha\in I}\left(M\otimes
		N_\alpha\right)\right)\) as submodules of \(M\otimes N\). By
    the snake lemma we get the following short exact sequence: 
    \[0=\ker(G)\sir\coker(H)\sir 0\]
    Thus the monomorphism~\(H\) is onto.
\end{proof}
Since a module is flat Mittag--Leffler if and only if it is
\(\aleph_1\)-projective (see~\cite[Thm.~2.9]{dh-jt:mittag-leffler}) we get the
following 
\begin{corollary}
    Any \(\aleph_1\)-projective module has the intersection property.
\end{corollary}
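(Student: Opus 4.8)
The plan is to deduce this directly from the preceding proposition by way of the characterisation of $\aleph_1$-projective modules. First I would invoke the equivalence recorded immediately above the statement, namely that a module is flat Mittag--Leffler if and only if it is $\aleph_1$-projective, which is \cite[Thm.~2.9]{dh-jt:mittag-leffler}. Of the two implications only the easy one is needed here: every $\aleph_1$-projective module is (flat) Mittag--Leffler.

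With this in hand the conclusion is immediate. Let $M$ be an $\aleph_1$-projective $R$-module. By the cited equivalence $M$ is Mittag--Leffler, and the preceding Proposition asserts that every Mittag--Leffler module has the intersection property. Composing these two facts shows that $M$ has the intersection property, which is precisely the assertion of the corollary.

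I do not expect a genuine obstacle in the deduction itself; the entire force of the corollary is imported from two earlier results, namely the cited theorem identifying $\aleph_1$-projectivity with flat Mittag--Lefflerness and the Proposition whose snake-lemma argument already established the intersection property for Mittag--Leffler modules. If one wished to make the statement self-contained rather than appealing to \cite{dh-jt:mittag-leffler}, the only nontrivial point would be to reprove that an $\aleph_1$-projective module is Mittag--Leffler, i.e.\ to convert the $\aleph_1$-indexed filtration by countably generated, suitably pure submodules into the dominating-map condition defining Mittag--Leffler modules. Given the references already in place, this can be taken as known, so the proof reduces to the one-line composition described above.
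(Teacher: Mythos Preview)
Your deduction is correct and matches the paper's approach exactly: the corollary is stated immediately after the sentence invoking \cite[Thm.~2.9]{dh-jt:mittag-leffler}, and no separate proof is given because, as you say, it is the one-line composition of that equivalence with the preceding proposition on Mittag--Leffler modules.
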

Now using~\cite[Prop.~2.1.8]{mr-lg:platitude-et-projectivte} we obtain:
\begin{corollary}\label{cor:mittag-leffler}
    A flat module has the intersection property if and only if it satisfies
    the Mittag--Leffler condition.
\end{corollary}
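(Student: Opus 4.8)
The plan is to prove the two implications separately, reusing the preceding Proposition for one direction and a single well-chosen test module for the other. First I would dispose of the easy implication: if \(M\) is flat and satisfies the Mittag--Leffler condition, then the preceding Proposition already yields that \(M\) has the intersection property (flatness is not even needed for this half). So all the content lies in the converse.

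For the converse, suppose \(M\) is flat and has the intersection property. I would invoke the equivalent form of the Mittag--Leffler condition recorded in \cite[Prop.~2.1.8]{mr-lg:platitude-et-projectivte}: namely that \(M\) is Mittag--Leffler if and only if, for every family \((Q_\alpha)_{\alpha\in I}\) of \(R\)-modules, the canonical map
\[c\colon M\otimes\Bigl(\prod_{\alpha\in I}Q_\alpha\Bigr)\sir\prod_{\alpha\in I}\bigl(M\otimes Q_\alpha\bigr)\]
is injective. It therefore suffices to produce, for an arbitrary family \((Q_\alpha)\), a module \(N\) and a family of submodules whose intersection-property isomorphism forces \(\ker c=0\). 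The natural choice is \(N\coloneq\prod_{\alpha\in I}Q_\alpha\) together with \(N_\alpha\coloneq\ker(\mathrm{pr}_\alpha)=\prod_{\beta\neq\alpha}Q_\beta\), for which \(\bigcap_{\alpha\in I}N_\alpha=0\).

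With this data I would run the kernel computation in the simplified flat form noted just after the definition, where the comparison map reads \(M\otimes\bigcap_\alpha N_\alpha\sir\bigcap_\alpha(M\otimes N_\alpha)\) with the intersections taken inside \(M\otimes N\). Because \(M\) is flat, \(M\otimes N_\alpha=\ker(\id_M\otimes\mathrm{pr}_\alpha)\) as a submodule of \(M\otimes N\); and since the \(\alpha\)-component of \(c\) is precisely \(\id_M\otimes\mathrm{pr}_\alpha\), intersecting over \(\alpha\) gives \(\bigcap_\alpha(M\otimes N_\alpha)=\ker c\). On the left-hand side \(M\otimes\bigcap_\alpha N_\alpha=M\otimes 0=0\). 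Hence the intersection-property isomorphism reads \(0\sir\ker c\), forcing \(\ker c=0\); as the family \((Q_\alpha)\) was arbitrary, \(M\) is Mittag--Leffler.

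The step I expect to require the most care is the identification \(\bigcap_\alpha(M\otimes N_\alpha)=\ker c\): one must verify that flatness turns the canonical image of \(M\otimes N_\alpha\) into exactly \(\ker(\id_M\otimes\mathrm{pr}_\alpha)\subseteq M\otimes N\), and that under this identification the comparison map of the intersection property coincides with the inclusion of \(\ker c\) induced by \(c\). Once these canonical maps are unwound the conclusion is immediate, the only external input being the product-injectivity characterisation of \cite[Prop.~2.1.8]{mr-lg:platitude-et-projectivte}.
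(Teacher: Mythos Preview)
Your argument is correct. The forward implication is indeed the preceding Proposition, and for the converse your choice \(N=\prod_\alpha Q_\alpha\), \(N_\alpha=\ker(\mathrm{pr}_\alpha)\) works exactly as you describe: flatness identifies \(M\otimes N_\alpha\) with \(\ker(\id_M\otimes\mathrm{pr}_\alpha)\) inside \(M\otimes N\), so \(\bigcap_\alpha(M\otimes N_\alpha)=\ker c\), and the intersection property forces this to vanish since \(\bigcap_\alpha N_\alpha=0\).

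The paper does not argue at all: it simply invokes \cite[Prop.~2.1.8]{mr-lg:platitude-et-projectivte} as a black box for the converse. You cite the same reference but only for the product-injectivity characterisation of Mittag--Leffler modules, and then supply the reduction yourself. So your route is the same in spirit (same external input) but strictly more informative, since you make transparent \emph{why} the intersection property recovers injectivity on products: the kernel of the comparison map \(c\) is literally an intersection of tensored kernels. This is a clean instance of the general principle that a limit-preservation property can be tested on the ``universal'' diagram built from the target objects themselves.
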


\begin{example}\label{ex:flat_without_ip}\footnote{We wish to thank S.Papadakis
	for discussions which led to these examples.}
    Let \(p\) be a prime ideal of \(\bZ\). Then \(\bigcap_ip^i=\{0\}\). We let
    \(\bZ_p\) denote the ring of fractions of \(\bZ\)  with respect to \(p\).
    The \(\bZ\)-module \(\bZ_p\) is flat, and
    \(\bZ_p\otimes_\bZ\bigcap_ip^i=\{0\}\). From the other side
    \(\bigcap_i\bZ_p\otimes_\bZ p^i\cong\bZ_p\). By similar argument
    \(\bQ\otimes_\bZ-\) doesn't posses the intersection property, even though
    it is flat over~\(\bZ\). The problem is that, the intersection property is
    stable under arbitrary sums but not under cokernels. Now it is easy to
    construct a faithfully flat module which does not have the intersection
    property. The \(\bZ\)-modules \(\bZ\oplus\bZ_p\) and \(\bZ\oplus\bQ\) are
    the examples.
\end{example}
In the proof of Proposition~\ref{cor:projective-modules} we showed that the
intersection property is stable under split exact sequences. However, the
above examples show that the intersection property is not stable under pure
(exact) sequences~\cite[Def.~4.83]{tl-modules}, i.e. whenever \(0\sir M'\sir
    M\sir M''\sir 0\) is a pure exact sequence and \(M\) has the intersection
property then~\(M''\) might not have it. It is well known that if \(M''\) is
flat then \(M'\subseteq M\) is pure~\cite[Thm~4.85]{tl-modules},
hence~\ref{ex:flat_without_ip} is indeed a source of examples. However, we
can show the following proposition:
\begin{proposition}
    Let \(M'\) be a pure submodule of a module \(M\) with the intersection
    property. Then \(M'\) has the intersection property.
\end{proposition}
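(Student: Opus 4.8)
The plan is to reduce everything to the intersection property of $M$ by means of a single lemma describing how canonical images interact with a pure submodule. Since $M'$ is pure in $M$, tensoring the inclusion $M'\subseteq M$ with any module is injective; in particular $M'\otimes N\to M\otimes N$ is a monomorphism, so I may regard $M'\otimes N$ as a submodule of $M\otimes N$. The key claim I would establish first is that for every submodule $N''\subseteq N$,
\[
\im(M'\otimes N'')=(M'\otimes N)\cap\im(M\otimes N'')
\]
as submodules of $M\otimes N$.

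To prove this claim I would tensor the short exact sequence $0\to N''\to N\to N/N''\to 0$ with both $M'$ and $M$ and stack the two resulting right-exact sequences into a commutative ladder, recalling that $\im(M'\otimes N'')=\ker\bigl(M'\otimes N\to M'\otimes(N/N'')\bigr)$ and likewise for $M$. Purity makes all three vertical maps $M'\otimes(-)\to M\otimes(-)$ (applied to $N''$, $N$ and $N/N''$) injective. The inclusion $\subseteq$ is immediate, since $\im(M'\otimes N'')$ lands in both $M'\otimes N$ and $\im(M\otimes N'')$. For the reverse inclusion I would take $x\in(M'\otimes N)\cap\im(M\otimes N'')$; then $x$ maps to $0$ in $M\otimes(N/N'')$, and because the right vertical map $M'\otimes(N/N'')\to M\otimes(N/N'')$ is injective, $x$ must already die in $M'\otimes(N/N'')$, i.e. $x\in\ker\bigl(M'\otimes N\to M'\otimes(N/N'')\bigr)=\im(M'\otimes N'')$. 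This diagram chase is the heart of the argument and the only place where purity is genuinely used.

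Granting the claim, the proposition follows by a short formal computation. Applying the claim with $N''=\bigcap_{\alpha}N_\alpha$ and then invoking the intersection property of $M$ gives
\[
\im\Bigl(M'\otimes\bigcap_{\alpha}N_\alpha\Bigr)=(M'\otimes N)\cap\im\Bigl(M\otimes\bigcap_{\alpha}N_\alpha\Bigr)=(M'\otimes N)\cap\bigcap_{\alpha}\im(M\otimes N_\alpha).
\]
On the other hand, applying the claim with $N''=N_\alpha$ for each $\alpha$ and intersecting gives $\bigcap_{\alpha}\im(M'\otimes N_\alpha)=(M'\otimes N)\cap\bigcap_{\alpha}\im(M\otimes N_\alpha)$, since the operation $(M'\otimes N)\cap(-)$ distributes over arbitrary intersections in the lattice of submodules of $M\otimes N$. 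Comparing the two displays yields $\im\bigl(M'\otimes\bigcap_{\alpha}N_\alpha\bigr)=\bigcap_{\alpha}\im(M'\otimes N_\alpha)$, which is precisely the intersection property for $M'$.

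I expect the main obstacle to be the careful verification of the lemma rather than the concluding computation: one must ensure that all identifications of canonical images with the relevant kernels and submodules of $M\otimes N$ are compatible across the ladder, and that purity is invoked exactly where needed, namely to guarantee injectivity of $M'\otimes(N/N'')\to M\otimes(N/N'')$, which is what drives the reverse inclusion. Once the lemma is in place, the final step is purely lattice-theoretic and uses nothing beyond the distributivity of intersection over intersection together with the hypothesis that $M$ has the intersection property.
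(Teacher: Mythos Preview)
Your proposal is correct and follows essentially the same approach as the paper: both arguments hinge on the diagram chase showing that, by purity, an element of $\bigcap_\alpha\im(M'\otimes N_\alpha)$ lying in $\ker\bigl(M\otimes N\to M\otimes(N/\bigcap_\alpha N_\alpha)\bigr)$ already lies in $\ker\bigl(M'\otimes N\to M'\otimes(N/\bigcap_\alpha N_\alpha)\bigr)=\im\bigl(M'\otimes\bigcap_\alpha N_\alpha\bigr)$. Your extraction of the auxiliary identity $\im(M'\otimes N'')=(M'\otimes N)\cap\im(M\otimes N'')$ as a standalone lemma is a clean modularisation of what the paper does in one pass inside a single large diagram, but the substance is the same.
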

This property is shared by the classes of flat or Mittag--Leffler modules.
\begin{proof}
    We have commutative diagram with exact rows:
    \begin{center}
	\hspace*{-1cm}
	\begin{tikzpicture}[style={>=angle 60,thin},cross line/.style={preaction={draw=white, -, line width=5pt}}]
	    \node (B0) at (0,0) {\(0\)};
	    \node (B1) at (3.2,0) {\(\im\left(M\otimes\left(\bigcap_{\alpha\in I}N_\alpha\right)\right)\)};
	    \node (B2) at (7,0) {\(M\otimes N\)};
	    \node (B3) at (11,0) {\(M\otimes\left(N/\bigcap_{\alpha\in I}N_\alpha\right)\)};
	    \node (B4) at (14.5,0) {\(0\)};
	    \node (C0) at (0,-4) {\(0\)};
	    \node (C1) at (3.2,-4){\(\bigcap_{\alpha\in I}\im\left(M\otimes N_\alpha\right)\)};
	    \node (C2) at (7,-4) {\(M\otimes N\)};
	    \node (C3) at (11,-4){\((M\otimes N)/\bigcap_{\alpha\in I}\im\left(M\otimes N_\alpha\right)\)};
	    \node (C4) at (14.5,-4) {\(0\)};
	    \draw[->] (B0) -- (B1);
	    \draw[->] (B1) -- (B2);
	    \draw[->] (B2) --node[fill=white]{\(g\)} (B3);
	    \draw[->] (B3) -- (B4);
	    \draw[->] (C0) -- (C1);
	    \draw[->] (C1) -- (C2);
	    \draw[->] (C2) -- (C3);
	    \draw[->] (C3) -- (C4);

	    \draw[->] (B1) --node[left]{\(H\)}node[above,rotate=-90]{\(\simeq\)} (C1);
	    \draw[->] (B2) --node[above,rotate=-90]{\(=\)} (C2);
	    \draw[->] (B3) --node[right]{\(G\)} (C3);

	    \node[xshift=-2cm,yshift=-1.4cm] (D0) at (0,0) {\(0\)};
	    \node[xshift=-2cm,yshift=-1.4cm] (D1) at (3.2,0) {\(\im\left(M'\otimes\left(\bigcap_{\alpha\in I}N_\alpha\right)\right)\)};
	    \node[xshift=-2cm,yshift=-1.4cm] (D2) at (7,0) {\(M'\otimes N\)};
	    \node[xshift=-2cm,yshift=-1.4cm] (D3) at (11,0) {\(M'\otimes\left(N/\bigcap_{\alpha\in I}N_\alpha\right)\)};
	    \node[xshift=-2cm,yshift=-1.4cm] (D4) at (14.5,0) {\(0\)};
	    \node[xshift=-2cm,yshift=-1.4cm] (E0) at (0,-4) {\(0\)};
	    \node[xshift=-2cm,yshift=-1.4cm] (E1) at (3.2,-4){\(\bigcap_{\alpha\in I}\im\left(M'\otimes N_\alpha\right)\)};
	    \node[xshift=-2cm,yshift=-1.4cm] (E2) at (7,-4) {\(M'\otimes N\)};
	    \node[xshift=-2cm,yshift=-1.4cm] (E3) at (11,-4){\((M'\otimes N)/\bigcap_{\alpha\in I}\im\left(M'\otimes N_\alpha\right)\)};
	    \node[xshift=-2cm,yshift=-1.4cm] (E4) at (14.5,-4) {\(0\)};
	    \draw[->,cross line] (D0) -- (D1);
	    \draw[->,cross line] (D1) -- (D2);
	    \draw[->,cross line] (D2) --node[fill=white]{\(f\)} (D3);
	    \draw[->,cross line] (D3) -- (D4);
	    \draw[->,cross line] (E0) -- (E1);
	    \draw[->,cross line] (E1) -- (E2);
	    \draw[->,cross line] (E2) -- (E3);
	    \draw[->,cross line] (E3) -- (E4);

	    \draw[->,cross line] (D1) --node[left]{\(H'\)} (E1);
	    \draw[->,cross line] (D2) --node[above,rotate=-90]{\(=\)} (E2);
	    \draw[->] (D3) --node[right]{\(G'\)} (E3);

	    \draw[>->] (D1) -- (B1);
	    \draw[>->] (D2) -- (B2);
	    \draw[>->] (D3) -- (B3);

	    \draw[>->] (E1) -- (C1);
	    \draw[>->] (E2) -- (C2);
	    \draw[->] (E3) -- (C3);
	\end{tikzpicture}
    \end{center}
    It easily follows that \(H'\) is a monomorphism. Let
    \(x\in\bigcap_{\alpha\in I}\im\left(M'\otimes N_\alpha\right)\). To prove
    that \(x\) is in the image of \(H'\) it is enough to show that it goes to
    \(0\) under \(f\). Now since \(H\) is an isomorphism it goes to \(0\)
    under \(g\) and thus it belongs to the kernel of \(f\).
\end{proof}
\begin{theorem}
    Every flat \(R\)-module has the intersection property (or equivalently is
    Mittag--Leffler) if and only if for any exact sequence: 
    \[0\sir M'\sir M\sir M''\sir 0\] 
    with \(M'\), \(M\) projective, \(M''\) flat and any family of submodules
    \((N_\alpha)_{\alpha\in I}\) of an \(R\)-module \(N\) the sequence:
    \begin{equation}\label{eq:every_flat}
            0\sir \bigcap_\alpha \left(M'\otimes N_\alpha\right)\sir\bigcap_\alpha \left(M\otimes N_\alpha\right)\sir\bigcap_\alpha \left(M''\otimes N_\alpha\right)\sir 0
    \end{equation}
    is exact.
\end{theorem}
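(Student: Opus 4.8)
The plan is to derive both implications from one commutative ladder together with the short five lemma. Fix a family \((N_\alpha)_{\alpha\in I}\) of submodules of \(N\) and a short exact sequence \(0\to M'\to M\to M''\to 0\) as in the statement. Since \(M''\) is flat, the inclusion \(M'\subseteq M\) is pure (\cite[Thm~4.85]{tl-modules}), so tensoring with \(\bigcap_\alpha N_\alpha\) keeps the sequence exact; this is the top row of the ladder. The bottom row is the intersection sequence~\eqref{eq:every_flat}. The vertical maps are the canonical comparison homomorphisms \(\phi_X\colon X\otimes\bigcap_\alpha N_\alpha\to\bigcap_\alpha(X\otimes N_\alpha)\) for \(X\in\{M',M,M''\}\); because \(M',M,M''\) are flat these are precisely the maps from the remark following the definition of the intersection property. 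They are natural in \(X\): any homomorphism \(X\to Y\) induces \(X\otimes N\to Y\otimes N\), which carries each \(X\otimes N_\alpha\) into \(Y\otimes N_\alpha\), hence their intersections into one another, so the ladder commutes and its lower horizontal maps are the restrictions of \(M'\otimes N\to M\otimes N\to M''\otimes N\). In particular the lower left map is injective by purity of \(M'\subseteq M\).

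For the forward direction, assume every flat module has the intersection property. Then \(M'\), \(M\) and \(M''\), all being flat, have it, so \(\phi_{M'},\phi_M,\phi_{M''}\) are isomorphisms and the ladder is an isomorphism of sequences. As the top row is exact, so is the bottom one; its left map is injective as noted, and its right map is surjective because \(\phi_M,\phi_{M''}\) are isomorphisms and \(M\otimes\bigcap_\alpha N_\alpha\to M''\otimes\bigcap_\alpha N_\alpha\) is onto. Hence~\eqref{eq:every_flat} is short exact.

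For the backward direction, assume the exactness condition. Let \(M''\) be flat and suppose first that it admits a presentation \(0\to M'\to M\to M''\to 0\) with \(M'\) and \(M\) projective. Then \(\phi_{M'}\) and \(\phi_M\) are isomorphisms by Corollary~\ref{cor:projective-modules}, the top row is exact by purity, and the bottom row is exact by hypothesis; the short five lemma forces \(\phi_{M''}\) to be an isomorphism, i.e.\ \(M''\) has the intersection property. To reach an arbitrary flat module \(F\) one presents \(0\to M'\to M\to F\to 0\) with \(M\) free; now \(M'\) is pure in \(M\) (again because \(F\) is flat) and therefore has the intersection property as a pure submodule of a module that has it, while \(M\) has it as well, so the ladder shows that \(F\) has the intersection property if and only if the induced map \(\bigcap_\alpha(M\otimes N_\alpha)\to\bigcap_\alpha(F\otimes N_\alpha)\) is onto. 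The hypothesis delivers exactly this surjectivity, but only when the syzygy \(M'\) is \emph{projective} rather than merely flat, and bridging this gap is the main obstacle: one must either realise \(F\) as a direct summand of, or assemble it from, flat modules carrying projective presentations and invoke stability of the intersection property under direct summands and direct sums, or argue at the level of the Mittag--Leffler condition through Corollary~\ref{cor:mittag-leffler}. Everything else is the routine diagram chase above, and by Corollary~\ref{cor:mittag-leffler} the resulting statement is equivalent to every flat module being Mittag--Leffler.
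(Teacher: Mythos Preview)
Your ladder with the short five lemma is precisely the paper's argument, and your forward direction is identical to it. The gap you name in the backward direction---needing, for an arbitrary flat \(M''\), a presentation \(0\to M'\to M\to M''\to 0\) with \(M'\) projective rather than merely flat---is exactly where the paper adds the one idea you are missing: it invokes Lazard's theorem (a flat module is a filtered colimit of finitely generated free modules) together with \cite[Thm~1, Chap.~V, \S2]{smc:categories-for-the-working-mathematician} to write any flat \(M''\) as the cokernel of a map between free modules, and then treats this as the required short exact sequence with \(M',M\) projective. With that presentation in hand the paper runs your ladder verbatim and concludes via the five lemma.

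A word of caution, since you already sensed trouble here: the cokernel construction yields \(M''=\mathrm{coker}(h\colon P_1\to P_0)\) with \(P_0,P_1\) free, so the honest short exact sequence is \(0\to\mathrm{im}(h)\to P_0\to M''\to 0\), and \(\mathrm{im}(h)\) need not be projective (that would force \(\mathrm{pd}\,M''\leq 1\), which is false for flat modules in general). The paper does not address this. The ladder itself only needs the left vertical \(\phi_{M'}\) to be an isomorphism, and \(\mathrm{im}(h)\), being pure in the projective \(P_0\), does have the intersection property by the pure-submodule proposition; but with \(M'=\mathrm{im}(h)\) not projective one is no longer entitled to invoke the \emph{hypothesis} of the theorem to obtain exactness of the bottom row. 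So the obstacle you flagged is real even in the paper's own proof; a clean repair is to state the theorem with ``\(M'\) projective'' relaxed to ``\(M'\) a pure submodule of \(M\)'' (equivalently, drop the condition on \(M'\) altogether, since purity already follows from flatness of \(M''\)), after which both your argument and the paper's go through without further change.
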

\begin{proof}
    Every flat module is a colimit of projective modules. Any colimit of
    projective modules can be computed as a cokernel of a map between
    projective modules, by~\cite[Thm~1, Chap.~V,
    \S2]{smc:categories-for-the-working-mathematician}.  So let \(M''\) be
    a flat module and let 
    \[\mathcal{E}:\ 0\sir M'\sir M\sir M''\sir 0\]
    be exact, where \(M'\) and \(M\) are projective modules, hence they
    satisfy the intersection property. The extension \(\mathcal{E}\) is pure,
    since \(M''\) is flat~\cite[Thm~4.85]{tl-modules}. Let
    \((N_\alpha)_{\alpha\in I}\) be a family of submodules of an \(R\)-module
    \(N\). We have a commutative diagram:
    \begin{center}
	\begin{tikzpicture}
	    \matrix[matrix, column sep=1cm, row sep=0.75cm]{
		\node(A) {\(0\)}; & \node(B) {\(M'\otimes\bigcap_\alpha N_\alpha\)};   & \node(C) {\(M\otimes\bigcap_\alpha N_\alpha\)};   & \node(D) {\(M''\otimes\bigcap_\alpha N_\alpha\)};   & \node(E) {\(0\)};\\
		\node(U) {\(0\)}; & \node(W) {\(\bigcap_\alpha\left(M'\otimes N_\alpha\right)\)}; & \node(X) {\(\bigcap_\alpha\left(M\otimes N_\alpha\right)\)}; & \node(Y) {\(\bigcap_\alpha\left(M''\otimes N_\alpha\right)\)}; & \node(Z) {\(0\)};\\
	    };
	    \draw[->] (A) -- (B);
	    \draw[->] (B) -- (C);
	    \draw[->] (C) -- (D);
	    \draw[->] (D) -- (E);
	    \draw[->] (U) -- (W);
	    \draw[->] (W) -- (X);
	    \draw[->] (X) -- (Y);
	    \draw[->] (Y) -- (Z);
	    \draw[->] (B) --node[left]{\rotatebox{90}{\(\cong\)}} (W);
	    \draw[->] (C) --node[left]{\rotatebox{90}{\(\cong\)}} (X);
	    \draw[->] (D) --node[right]{\(f\)} (Y);
	\end{tikzpicture}
    \end{center}
    The upper row is exact by purity of \(M'\subseteq M\), thus the lower row
    is exact if and only if the canonical map \(f\) is an isomorphism.
\end{proof}
The exactness of~\eqref{eq:every_flat} is rather difficult to obtain but it
might be useful once we know that every flat module is Mittag--Leffler. 

\section{Galois Theory for Hopf-Galois extensions}
Now we can prove existence of the Galois correspondence for comodule algebras.
\begin{theorem}\label{thm:existence}
    Let \(A/B\) be a \(H\)-comodule algebra over a ring \(R\) such that \(A\)
    is a flat Mittag--Leffler module. Then there exists a \textsf{Galois
	connection}:
	\begin{equation}\label{eq:galois-connection}
	    \Sub_{\Alg}(A)\,\galois{\psi}{\phi}\,\qquot(H)
	\end{equation}
	where \(\phi(Q):=A^{co\,Q}\) and \(\psi\) is given by the following
	formula: \(\psi(S)=\bigvee\{Q\in\qquot(H): S\subseteq A^{co\,Q}\}\).
\end{theorem}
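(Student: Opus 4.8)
The plan is to obtain the connection by invoking the existence theorem (Theorem~\ref{thm:existence-of-adjunction}) with the explicit map \(\phi\) in the role of \(F\). Since \(\qquot(H)\) is a complete lattice, it suffices to verify that \(\phi\colon\qquot(H)\to\Sub_{\Alg}(A)\), \(\phi(Q)=A^{co\,Q}\), is anti-monotonic and reflects all suprema; the partner map is then forced, and by uniqueness of the adjoint (Proposition~\ref{prop:properties-of-adjunction}(3)) it must be \(\psi(S)=\bigvee\{Q:\phi(Q)\supseteq S\}=\bigvee\{Q\in\qquot(H):S\subseteq A^{co\,Q}\}\), which is the stated formula. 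Anti-monotonicity is immediate: if \(Q_1\succcurlyeq Q_2\) then the projection \(Q_1\to Q_2\) makes \(\delta_{Q_2}\) factor through \(\delta_{Q_1}\), so a \(Q_1\)-coinvariant element is \(Q_2\)-coinvariant and \(A^{co\,Q_1}\subseteq A^{co\,Q_2}\).

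The heart of the matter is that \(\phi\) carries suprema in \(\qquot(H)\) to infima (intersections) in \(\Sub_{\Alg}(A)\), i.e.
\[A^{co\,(\bigvee_\alpha Q_\alpha)}=\bigcap_\alpha A^{co\,Q_\alpha}.\]
I would first recast the coinvariants. Writing \(\theta\colon A\to A\otimes H\), \(\theta(a)=\delta_A(a)-a\otimes 1_H\), and \(I_\alpha=\ker(\pi_{Q_\alpha}\colon H\to Q_\alpha)\), flatness of \(A\) gives \(\ker(\id_A\otimes\pi_{Q_\alpha})=A\otimes I_\alpha\) (as a submodule of \(A\otimes H\)), whence \(A^{co\,Q_\alpha}=\theta^{-1}(A\otimes I_\alpha)\). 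As preimages commute with intersections, \(\bigcap_\alpha A^{co\,Q_\alpha}=\theta^{-1}\bigl(\bigcap_\alpha(A\otimes I_\alpha)\bigr)\); and since \(A\) is flat Mittag--Leffler it has the intersection property (Corollary~\ref{cor:mittag-leffler}), so \(\bigcap_\alpha(A\otimes I_\alpha)=A\otimes\bigl(\bigcap_\alpha I_\alpha\bigr)\). Thus \(\bigcap_\alpha A^{co\,Q_\alpha}=\theta^{-1}(A\otimes W)\) with \(W:=\bigcap_\alpha I_\alpha\).

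The point I expect to be the main obstacle is that \(W\) is only a \emph{right ideal} of \(H\), not a coideal, so it need not be the kernel of a generalised quotient. Indeed the supremum \(\bigvee_\alpha Q_\alpha\) corresponds under the dual isomorphism \(\qquot(H)\cong\qid(H)^{\mathit{op}}\) to the meet in \(\qid(H)\), which by the infimum formula for \(\qid(H)\) is the largest right ideal coideal \(J\subseteq W\); hence \(A^{co\,(\bigvee_\alpha Q_\alpha)}=\theta^{-1}(A\otimes J)\). Since \(J\subseteq W\) one inclusion is free, and the identity reduces to the lemma \(\theta^{-1}(A\otimes W)\subseteq\theta^{-1}(A\otimes J)\). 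To prove it I would take \(a\) with \(\theta(a)\in A\otimes W\) and exploit the comodule coassociativity identity \((\id_A\otimes\Delta)\theta(a)=(\delta_A\otimes\id_H)\theta(a)+\theta(a)\otimes 1_H\); together with \(\theta(a)\in A\otimes W\) and flatness this forces \((\id_A\otimes\Delta)\theta(a)\in A\otimes(H\otimes W+W\otimes H)\). Reading off the \(H\)-coefficients of \(\theta(a)\) (which \(\epsilon\) kills, each \(I_\alpha\) being a coideal), one concludes that they span a coideal \(V\subseteq W\). Because \(\Delta\) is an algebra map, \(VH\) is again a coideal, and it is a right ideal contained in \(W\) (as \(W\) is a right ideal), so \(V\subseteq VH\subseteq J\) and \(\theta(a)\in A\otimes J\). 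The one delicate step over a general ring \(R\) is making the passage to this coefficient subcomodule \(V\) rigorous without a basis, and this is exactly where the flat Mittag--Leffler hypothesis is needed.

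With \(\phi\) anti-monotonic and reflecting all suprema, Theorem~\ref{thm:existence-of-adjunction} yields the Galois connection \((\phi,\psi)\), and Proposition~\ref{prop:properties-of-adjunction}(3) identifies \(\psi\) with the displayed formula, completing the proof.
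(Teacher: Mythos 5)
Your overall architecture matches the paper's: show that $\phi(Q)=A^{co\,Q}$ is antitone and reflects arbitrary suprema, invoke Theorem~\ref{thm:existence-of-adjunction} (using completeness of $\qquot(H)$), and read off the formula for $\psi$ from uniqueness of the adjoint. You also correctly isolate the real content: after flatness and the intersection property of flat Mittag--Leffler modules (Corollary~\ref{cor:mittag-leffler}) give $\bigcap_\alpha A^{co\,Q_\alpha}=\theta^{-1}(A\otimes W)$ with $W=\bigcap_\alpha I_\alpha$, one must pass from the right ideal $W$ to the largest right ideal coideal $J\subseteq W$, i.e.\ prove $\theta^{-1}(A\otimes W)\subseteq\theta^{-1}(A\otimes J)$. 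Where you genuinely diverge is in how you prove this. Over a field your coefficient argument works, and is in fact sharper than you state it: writing $\theta(a)=\sum_k a_k\otimes h_k$ with the $a_k$ linearly independent and $V=\mathrm{span}(h_k)$, the right-hand side of $(\id_A\otimes\Delta)\theta(a)=(\delta_A\otimes\id_H)\theta(a)+\theta(a)\otimes 1_H$ lies in $A\otimes H\otimes V+A\otimes V\otimes H$ (with $V$ itself, not merely $W$, in each slot), so $\Delta(V)\subseteq H\otimes V+V\otimes H$ and $V$ is already a coideal; then $VH$ is a right ideal coideal inside $W$, whence $V\subseteq VH\subseteq J$. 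This is self-contained and even covers the base case $A=H$ directly, which the paper instead handles by citing the known Galois connection between $\qsub(H)$ and $\qquot(H)$.

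The gap is exactly where you flag it, and it is a genuine one because the theorem is stated over an arbitrary ring $R$: the step ``the coefficients of $\theta(a)$ span a coideal'' requires separating the $a_k$ by linear functionals, and you invoke the Mittag--Leffler hypothesis by name without actually deploying it to do so. The paper closes this with a three-step reduction that you would need to reproduce. First the case $A=H$ (via the $\qsub(H)$--$\qquot(H)$ connection). Then the case $A\otimes H$: for $x=\sum_k a_k\otimes h_k$ the finitely many $a_k$ lie in a projective submodule $A_0\subseteq A$ with dual basis $\{e_j,e^j\}$ (this is where flat Mittag--Leffler enters, via~\cite[Thm~2.9]{dh-jt:mittag-leffler}), and since the relevant map $\id_A\otimes\Delta$ never touches the $A$-leg one may apply the functionals $e^j$ to reduce to $H$. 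Finally, for general $a\in A$ one applies $\delta_A$ to land in the $A\otimes H$ case and returns with $\id_A\otimes\epsilon\otimes\id_H$. Note that your argument does not obviously survive this functional calculus as stated, because $(\delta_A\otimes\id_H)\theta(a)$ has first leg $\delta_A(a_k)$, which need not lie in $A_0\otimes H$, so $e^j$ cannot be applied to it; this is why the paper's detour through $A\otimes H$ (where only the last tensor factor is comultiplied) is not cosmetic. As written, your proof is complete over a field but not in the stated generality.
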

If \(B\) is commutative and \(H\) is projective over the base ring then
\(A/B\) is projective \((B\otimes H)\)-Hopf Galois extension
by~\cite[Thm~1.7]{hk-mt:hopf-algebras-and-galois-extensions} and thus the
above theorem applies as well as~\cite[Prop.~3.2]{ps:gal-cor-hopf-bigal}.
\begin{proof}
	We shall show that $\phi$ reflects all suprema: \(A^{co\,\bigvee_{i\in
		    I}\, Q_i}=\bigcap_{i\in I}\,A^{co\,Q_i}\). From the set
	of inequalities: \(\bigvee_{i\in I}\,Q_i\geq Q_j\ (\forall_{j\in I})\)
	it follows that \(A^{co\,\bigvee_{i\in I}\, Q_i}\subseteq\bigcap_{i\in
		I}\,A^{co\,Q_i}\).  Let us fix an element $a\in\bigcap_{i\in
	    I}\,A^{co\,Q_i}$. We let $I_i$ denote the coideal and right ideal
	such that $Q_i=H/I_i$. We identify $A\otimes I_i$ with a submodule of
	$A\otimes H$, what can be done under the assumption that $A$ is flat
	over \(R\). We want to show that:
	\[\forall_{i\in I}\;a\in A^{co\,Q_i}\ \Leftrightarrow\ \forall_{i\in I}\;\delta(a)-a\otimes 1\in A\otimes I_i\ \Leftrightarrow\ \delta(a)-a\otimes 1\in A\otimes\bigcap_{i\in I}\,I_i\ \Leftrightarrow\ a\in A^{co\,\bigvee_{i\in I} Q_i}\]
	The first equivalence is clear, the second follows from the equality:
	$\bigcap_{i\in I}A\otimes I_i=A\otimes \bigcap_{i\in I} I_i$ which
	holds since flat Mittag--Leffler modules have the intersection
	property. It remains to show that if $\delta(a)-a\otimes 1\in
	A\otimes\bigcap_{i\in I}I_i$ then \mbox{$\delta(a)-a\otimes 1\in
	    A\otimes\bigwedge_{i\in I}I_i$.} Then it follows that
	\(\delta(a)-a\otimes 1\in A\otimes\bigcap_{i\in I}I_i\Leftrightarrow
	    a\in A^{co\,\vee_{i\in I}Q_i}\). We proceed in three steps: we
	first prove this for \(H\), then for \(A\otimes H\) and finally for
	a general \(H\)-comodule algebra \(A\).  For $A=H$ this follows from
	existence of the Galois connection:
    \begin{equation*}
	\qsub(H)\lgalois{K\elmap{}H/HK^+}{\hspace{-.1cm}H^{co\,Q}\ellmap{}Q}\qquot(H)
    \end{equation*}
    where \(\qsub(H)\) is the lattice of right coideal subalgebras of \(H\).
    Note that the infimum in both \(\qsub(H)\) and \(\Sub_\Alg(H)\) is given
    by intersection thus the above adjunction extends to
    \(\Sub_\Alg(H)\sgalois{}{}\qquot(H)\). Now the proof for \(A\otimes H\):
    let \(x=\sum_{k=1}^na_k\otimes h_k\in A\otimes H\) be such that
    \(\sum_{k=1}^na_k\otimes\Delta(h_k)-\sum_{k=1}^na_k\otimes h_k\otimes
	1_H\in A\otimes H\otimes \bigcap_{i\in I}I_i\). Since \(A\) is flat
    Mittag--Leffler module, every finitely generated submodule of \(A\) is
    contained in a projective submodule~\cite[Thm~2.9]{dh-jt:mittag-leffler}.
    Choose a dual basis for the projective submodule \(A_0\) which contains
    all the \(\{a_k\}\): \(\{e_j, e^j\}_{j\in J}\), where \(e_j\in A_0\) and
    \(e^j\in A_0^*\).  Then for every \(j\in J\) we have:
    \[\sum_k e^j(a_k)\Delta(h_k)-e^j(a_k)h_k\otimes1\in H\otimes\bigcap_{i\in I}I_i,\text{ thus } \sum_k e^j(a_k)\Delta(h_k)-e^j(a_k)h_k\otimes1\in H\otimes\bigwedge_{i\in I}I_i.\]
    It follows that \(id_A\otimes\Delta(x)-x\otimes 1_H\in A\otimes
	H\otimes\bigwedge_{i\in I}I_i\). For general case, observe that if
    $a_{\mathit{(0)}}\otimes a_{\mathit{(1)}}-a\otimes 1\in
    A\otimes\bigcap_{i\in I}I_i$ then $a_{\mathit{(0)}}\otimes
    a_{\mathit{(1)}}\otimes a_{\mathit{(2)}}-a_{\mathit{(0)}}\otimes
    a_{\mathit{(1)}}\otimes 1\in A\otimes H\otimes\bigcap_{i\in I}I_i$, thus
    by the previous case $a_{\mathit{(0)}}\otimes a_{\mathit{(1)}}\otimes
    a_{\mathit{(2)}}-a_{\mathit{(0)}}\otimes a_{\mathit{(1)}}\otimes 1\in
    A\otimes H\otimes\bigwedge_{i\in I}I_i$.  Computing
    $\id_A\otimes\epsilon\otimes\id_H$ we get $\delta(a)-a\otimes 1\in
    A\otimes\bigwedge_{i\in I}I_i$. The formula
    \(\psi(S)=\bigvee\{Q\in\qquot(H): S\subseteq A^{co\,Q}\}\) is an easy
    consequence of the Galois connection properties.
\end{proof}

\subsection{Closed Elements}
The main aim of this section is to characterise the closed elements
of the correspondence~\eqref{eq:galois-connection}.
\begin{proposition}\label{prop:mono}
	Let $A$ be an $H$-comodule algebra over a ring $R$ with surjective
	canonical map and let $A$ be a $Q_1$-Galois and a $Q_2$-Galois
	extension where $Q_1,Q_2\in \qquot(H)$. Then:
	\[A^{co\,Q_1}=A^{co\,Q_2}\ \Rightarrow\ Q_1=Q_2\]
\end{proposition}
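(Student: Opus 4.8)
The plan is to exploit the hypothesis $A^{co\,Q_1}=A^{co\,Q_2}$ together with the injectivity of the two Galois canonical maps in order to show that the right ideal coideals $I_1,I_2$ defining $Q_i=H/I_i$ coincide. Write $B_0:=A^{co\,H}$ and $B:=A^{co\,Q_1}=A^{co\,Q_2}$. Since $A$ is flat over $R$ we may identify $A\otimes I_i$ with a submodule of $A\otimes H$, and then $A\otimes I_i=\ker(\id_A\otimes\pi_{Q_i})$, so that $A^{co\,Q_i}=\{a\in A:\delta(a)-a\otimes 1\in A\otimes I_i\}$.

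First I would record the bridge between the full and the partial canonical maps. Let $\can\colon A\otimes_{B_0}A\to A\otimes H$ be the (surjective) canonical map of $A/A^{co\,H}$, let $\can_{Q_i}\colon A\otimes_{A^{co\,Q_i}}A\to A\otimes Q_i$ be the partial ones, and let $p_i\colon A\otimes_{B_0}A\to A\otimes_{A^{co\,Q_i}}A$ be the natural projection (defined because $B_0\subseteq A^{co\,Q_i}$). A direct check on generators gives the identity $\can_{Q_i}\circ p_i=(\id_A\otimes\pi_{Q_i})\circ\can$. The decisive observation is that the hypothesis $A^{co\,Q_1}=A^{co\,Q_2}=B$ forces the two projections to coincide, $p_1=p_2=:p\colon A\otimes_{B_0}A\to A\otimes_B A$, since they are the canonical quotient maps onto the \emph{same} target.

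Next, since each $\can_{Q_i}$ is injective (this is exactly the $Q_i$-Galois assumption), the identity $\can_{Q_i}\circ p=(\id_A\otimes\pi_{Q_i})\circ\can$ yields $\ker\bigl((\id_A\otimes\pi_{Q_i})\circ\can\bigr)=\ker p$ for both $i=1,2$; in particular these two kernels agree. Rewriting them as preimages gives $\can^{-1}(A\otimes I_1)=\ker p=\can^{-1}(A\otimes I_2)$. Applying the map $\can$ and using that $\can(\can^{-1}(S))=S$ for a surjection, I obtain the equality of submodules $A\otimes I_1=A\otimes I_2$ inside $A\otimes H$.

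Finally I would descend this to $I_1=I_2$. Because tensoring with the flat module $A$ preserves finite intersections, $A\otimes(I_1\cap I_2)=(A\otimes I_1)\cap(A\otimes I_2)=A\otimes I_1$, so the inclusion $I_1\cap I_2\hookrightarrow I_1$ becomes an isomorphism after applying $A\otimes-$; faithful flatness of $A$ then gives $I_1\cap I_2=I_1$, i.e. $I_1\subseteq I_2$, and symmetrically $I_2\subseteq I_1$, whence $I_1=I_2$ and $Q_1=Q_2$. I expect the main difficulty to lie at the two ends of this argument: first, confirming carefully that $A^{co\,Q_1}=A^{co\,Q_2}$ genuinely collapses $p_1$ and $p_2$ into a single map, so that the two kernel computations are comparable; and second, the concluding descent, which is the only place where flatness (and, for the final equality, faithful flatness) of $A$ is essential in passing from $A\otimes I_1=A\otimes I_2$ back to $I_1=I_2$.
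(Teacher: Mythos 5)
Your argument up to the equality $\im(A\otimes I_1)=\im(A\otimes I_2)$ inside $A\otimes H$ is correct and is essentially the paper's own computation: both proofs rest on the commutative diagram relating $\can$, $\can_{Q_i}$ and $\id_A\otimes\pi_{Q_i}$, on the observation that $A^{co\,Q_1}=A^{co\,Q_2}$ collapses the two projections $p_1,p_2$ into a single map, and on cancelling the surjection $\can$. (For this part you do not even need flatness of $A$ over $R$: without it one simply works throughout with the canonical images $\im(A\otimes I_i)=\ker(\id_A\otimes\pi_{Q_i})$, which is all the argument uses.)

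The divergence, and the one genuine problem, is your final descent from $A\otimes I_1=A\otimes I_2$ to $I_1=I_2$. There you invoke flatness of $A$ over $R$ (to commute $A\otimes-$ with the finite intersection $I_1\cap I_2$) and then faithful flatness of $A$ over $R$ (to conclude that $I_1\cap I_2\hookrightarrow I_1$ is onto). Neither appears among the hypotheses of the proposition; in the ambient setting of the paper $A$ is a flat Mittag--Leffler module, so the first is harmless there, but faithful flatness of $A$ over $R$ is nowhere assumed and does not follow from flatness (e.g.\ $\bQ$ is a flat but not faithfully flat $\bZ$-algebra). The paper's endgame avoids descending at the level of ideals: from the identity $(\can_{Q_1}\circ\can_{Q_2}^{-1})\circ(\id\otimes\pi_2)=\id\otimes\pi_1$, which is exactly your kernel computation in disguise, it extracts a right $H$-linear and $H$-colinear map $\pi:Q_2\to Q_1$ with $\pi\circ\pi_2=\pi_1$, hence $Q_2\succcurlyeq Q_1$, and concludes by symmetry and antisymmetry of the order on $\qquot(H)$. (To be fair, extracting $\pi$ from $\id_A\otimes\pi$ also implicitly uses that $x\mapsto 1_A\otimes x$ is injective, i.e.\ purity of $R\to A$ --- automatic over a field --- but that is weaker than the faithful flatness your step requires.) To repair your version, either add this purity hypothesis explicitly and use it in place of faithful flatness, or replace the last step by the construction of the comparison map $\pi$ between the quotients.
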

\begin{proof}
Let $B=A^{co\,Q_1}=A^{co\,Q_2}$ then we have the following commutative diagram:
\begin{center}
\begin{tikzpicture}[>=angle 60,thick]
\matrix[matrix,column sep=7mm,row sep=7mm]{
   &                             & \node(AQ1) {\(A\otimes Q_1\)};\\
   \node(A) {\(A\otimes_B A\)}; & \node(B) {\(A\otimes_{A^{co\,H}}A\)}; & \node(C)   {\(A\otimes H\)};\\
   &                             & \node(AQ2) {\(A\otimes Q_2\)};\\
};
\begin{scope}
\draw[->] (A) -- node[above]{$\can_{Q_1}$} (AQ1);
\draw[->] (A) -- node[below]{$\can_{Q_2}$} (AQ2);
\draw[<<-] (A) -- (B);
\draw[->>] (B) -- node[above,pos=.4]{$\can$}(C);
\draw[->] (C) -- node[right]{$id\otimes\pi_1$}(AQ1);
\draw[->] (C) -- node[right]{$id\otimes\pi_2$}(AQ2);
\end{scope}
\end{tikzpicture}
\end{center}
The maps $\can_{Q_1}$ and $\can_{Q_2}$ are isomorphisms. Let
$f:=(\can_{Q_1}\circ\can_{Q_2}^{-1})\circ(id\otimes\pi_2)$. By commutativity
of the above diagram, $f\circ\can$ and $(id\otimes\pi_1)\circ\can$ are equal.
Moreover, surjectivity of $\can$ yields the equality $(\can_{Q_1}\circ
\can_{Q_2}^{-1})\circ(id\otimes\pi_2)=(id\otimes\pi_1)$. It follows that there
exists $\pi:\,Q_1\ir Q_2$ such that $\can_1\circ \can_2^{-1}=id\otimes \pi$
and $\pi\circ\pi_2=\pi_1$.  Furthermore, $\pi$ is right $H$-linear and
\(H\)-colinear, thus $Q_2\succcurlyeq Q_1$.  In the same way we obtain that
$Q_1\succcurlyeq Q_2$.  Using antisymmetry of \(\succcurlyeq\) we get
$Q_1=Q_2$.
\end{proof}
\begin{corollary}\label{cor:Q-Galois_closed}
    Let \(A\) be an \(H\)-comodule algebra with epimorphic canonical map
    \(\can_H\) such that the Galois connection~\eqref{eq:galois-connection}
    exists. Then \(Q\in \qquot(H)\) is a \textsf{closed element} of 
    Galois connection~\eqref{eq:galois-connection} if \(A/A^{co\,Q}\) is
    \(Q\)-Galois.
\end{corollary}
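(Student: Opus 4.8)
The plan is to verify directly that $Q$ is a closed element of~\eqref{eq:galois-connection}, i.e. that $\psi\phi(Q)=Q$ in $\qquot(H)$, where $\phi(Q)=A^{co\,Q}$. The inequality $\psi\phi(Q)\succcurlyeq Q$ holds in every Galois connection (it is the defining property $FGq\geq q$ from Definition~\ref{defi:Galois_connection}), so the only thing to prove is the reverse inequality $Q\succcurlyeq\psi\phi(Q)$. Spelling out $\psi$ via its formula in Theorem~\ref{thm:existence}, we have $\psi\phi(Q)=\bigvee\{Q''\in\qquot(H):A^{co\,Q}\subseteq A^{co\,Q''}\}$, and since a supremum is the least upper bound it suffices to show that $Q$ bounds this set from above, i.e. to establish the implication $A^{co\,Q}\subseteq A^{co\,Q''}\Rightarrow Q\succcurlyeq Q''$. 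Writing $Q=H/I$ and $Q''=H/I''$, the order on $\qquot(H)$ translates (as in the computation $\bigvee_iQ_i=H/\bigwedge_iI_i$ of Theorem~\ref{thm:existence}) into $Q\succcurlyeq Q''\Leftrightarrow I\subseteq I''$, so the goal becomes: produce a surjection of generalised quotients $Q\to Q''$, equivalently a map $\bar\pi\colon Q\to Q''$ with $\bar\pi\circ\pi_Q=\pi_{Q''}$.

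To build such a $\bar\pi$ I would mimic the mechanism of Proposition~\ref{prop:mono}, now using that $A/A^{co\,Q}$ is $Q$-Galois. Put $B=A^{co\,Q}\subseteq B''=A^{co\,Q''}$. The inclusion induces a surjection $p\colon A\otimes_B A\to A\otimes_{B''}A$; composing with the canonical map of $Q''$ gives $\Phi:=\can_{Q''}\circ p\colon A\otimes_B A\to A\otimes Q''$, $a\otimes b\mapsto ab_{(0)}\otimes\pi_{Q''}(b_{(1)})$. Since $\can_Q\colon A\otimes_B A\to A\otimes Q$ is an isomorphism (this is precisely the $Q$-Galois hypothesis), I set $g:=\Phi\circ\can_Q^{-1}\colon A\otimes Q\to A\otimes Q''$, which is left $A$-linear. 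Precomposing everything with the projection $p_0\colon A\otimes_{A^{co\,H}}A\to A\otimes_B A$ (available because $A^{co\,H}\subseteq B$) and using the factorisations $\can_Q\circ p_0=(\id\otimes\pi_Q)\circ\can_H$ and $\Phi\circ p_0=(\id\otimes\pi_{Q''})\circ\can_H$, one obtains
\[ g\circ(\id\otimes\pi_Q)\circ\can_H=g\circ\can_Q\circ p_0=\Phi\circ p_0=(\id\otimes\pi_{Q''})\circ\can_H. \]

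This is exactly where the hypothesis that $\can_H$ is epimorphic is used: cancelling $\can_H$ on the right leaves $g\circ(\id\otimes\pi_Q)=\id\otimes\pi_{Q''}$ as maps $A\otimes H\to A\otimes Q''$. As $\id\otimes\pi_Q$ is surjective and $g$ is left $A$-linear, this should force $g=\id\otimes\bar\pi$ for a (necessarily surjective) map $\bar\pi\colon Q\to Q''$ with $\bar\pi\circ\pi_Q=\pi_{Q''}$, exactly as the corresponding descent is carried out in Proposition~\ref{prop:mono}. The relation $\bar\pi\circ\pi_Q=\pi_{Q''}$ then yields $I=\ker\pi_Q\subseteq\ker\pi_{Q''}=I''$, i.e. $Q\succcurlyeq Q''$, which is what we needed; combined with $\psi\phi(Q)\succcurlyeq Q$ this gives $\psi\phi(Q)=Q$. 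I expect the delicate point to be precisely this last descent: passing from the identity $g\circ(\id\otimes\pi_Q)=\id\otimes\pi_{Q''}$ (an equality only after tensoring with $A$) down to an honest morphism $\bar\pi\colon Q\to Q''$, equivalently to the ideal inclusion $I\subseteq I''$, is the heart of the matter. It is there that the epimorphy of $\can_H$ — together with the $A$-linear and comodule structure transported along $\can_Q^{-1}$ — must be exploited, just as in the proof of Proposition~\ref{prop:mono}.
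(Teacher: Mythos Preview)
Your argument is correct and uses the same mechanism as the paper, but the packaging differs. The paper first sets $\widetilde Q:=\psi\phi(Q)$, notes $A^{co\,\widetilde Q}=A^{co\,Q}$ (from $\phi\psi\phi=\phi$), and then uses a two-square diagram to show $\can_{\widetilde Q}$ is itself bijective: the upper square (with $\can_H$ epi) gives surjectivity, the lower square (with $\can_Q$ iso and $A\otimes_{A^{co\,Q}}A=A\otimes_{A^{co\,\widetilde Q}}A$) gives injectivity. With both $Q$ and $\widetilde Q$ Galois over the same coinvariants, Proposition~\ref{prop:mono} then forces $Q=\widetilde Q$. You instead compare $Q$ directly with an \emph{arbitrary} $Q''$ satisfying $A^{co\,Q}\subseteq A^{co\,Q''}$ and produce $\bar\pi:Q\to Q''$, thereby showing $Q$ bounds the whole fibre without ever establishing that $\widetilde Q$ is Galois. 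This is a genuine streamlining: it removes one diagram-chase at the cost of proving a marginally stronger statement, and it shows that in Proposition~\ref{prop:mono} only one of the two quotients really needs to be Galois to obtain a one-sided comparison.

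The descent you flag --- passing from $g\circ(\id\otimes\pi_Q)=\id\otimes\pi_{Q''}$ to an actual $\bar\pi:Q\to Q''$ --- is precisely the step the paper also takes for granted in the proof of Proposition~\ref{prop:mono} (``It follows that there exists $\pi\ldots$''). So your concern is no more and no less serious here than there: over a field it is immediate, and over a ring one needs the map $q\mapsto 1_A\otimes q:Q''\to A\otimes Q''$ to be injective (for instance $A$ faithfully flat over $R$, as is assumed in the later Theorem~\ref{thm:Galois_closedness}). Neither your proof nor the paper's addresses this explicitly.
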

\begin{proof}
Fix $A^{co Q}$ for some $Q\in \qquot(H)$ then $\phi^{-1}(A^{co\,Q})$ is an
upper-sublattice  of $\qquot(H)$ (i.e. it is a subposet closed under finite
suprema) which has the greatest element, namely $\widetilde
Q=\psi(A^{co\,Q})$. Moreover, $\widetilde Q$ is the only closed element
belonging to $\phi^{-1}(A^{co\,Q})$. Both $Q\leq\psi(A^{co\,Q})$ and the
assumption that $A/A^{co\,Q}$ is $Q$-Galois imply that \(A/A^{co\,\widetilde
	Q}\) is \(\widetilde Q\)-Galois. To this end, we consider the
commutative diagram:
\begin{center}
\begin{tikzpicture}
    \node (A) at (0,0)	{\(A\otimes_B A\)};
    \node (B) at (3cm,0)	{\(A\otimes H\)};
    \node (C) at (0cm,-1.7cm) {\(A\otimes_{A^{co\,\widetilde Q}}A\)};
    \node (D) at (3cm,-1.7cm) {\(A\otimes \widetilde Q\)};
    \node (E) at (0cm,-3.4cm) {\(A\otimes_{A^{co\,Q}}A\)};
    \node (F) at (3cm,-3.4cm) {\(A\otimes Q\)};
    \begin{scope}[>=angle 60,thick]
	\draw[->>] (A) -- node[above]{\(\can_H\)} (B);
	\draw[->>] (A) -- (C);
	\draw[->>] (B) -- (D);
	\draw[->]  (C) -- node[below]{\(\can_{\widetilde Q}\)} (D);
	\draw[->]  (E) -- node[below]{\(\can_Q\)} node [above]{\(\simeq\)}  (F);
	\draw[->]  (C) -- node[left]{\(=\)}(E);
	\draw[->>] (D) -- (F);
    \end{scope}
\end{tikzpicture}
\end{center}
From the lower commutative square we get that $\can_{\widetilde Q}$ is a
monomorphism and from the upper commutative square we deduce that
$\can_{\widetilde Q}$ is onto. Unless $\widetilde Q=Q$ we get a contradiction
with the previous proposition.
\end{proof}
The above result applies also to the Galois correspondence
of~\citet{ps:gal-cor-hopf-bigal}, as it is the same as the Galois connection
of Theorem~\ref{thm:existence} (see
Proposition~\ref{prop:properties-of-adjunction}(3)). Since for a finite
dimensional Hopf algebra \(H\) for every \(Q\) the extension \(A/A^{co\,Q}\)
is \(Q\)-Galois (see~\cite[Cor.~3.3]{ps-hs:gen-hopf-galois}) we get the
following statement.
\begin{proposition}\label{prop:finite_case}
    Let \(H\) be a finite dimensional Hopf algebra over a field \(k\). Let
    \(A/B\) be an \(H\)-Hopf Galois extension. Then every \(Q\in\qquot(H)\) is
    closed. 
\end{proposition}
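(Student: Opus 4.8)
The plan is to reduce the claim to Corollary~\ref{cor:Q-Galois_closed}. That corollary guarantees that a generalised quotient \(Q\) is closed as soon as three things hold: the Galois connection~\eqref{eq:galois-connection} exists, the canonical map \(\can_H\) is epimorphic, and \(A/A^{co\,Q}\) is \(Q\)-Galois. Since the statement concerns an arbitrary \(Q\in\qquot(H)\), it suffices to verify these hypotheses uniformly in \(Q\).

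First I would establish existence of the connection. Here the base ring is the field \(k\), so \(A\) is a \(k\)-vector space and therefore free, hence flat and Mittag--Leffler. Consequently Theorem~\ref{thm:existence} applies and the Galois connection~\eqref{eq:galois-connection} between \(\Sub_{\Alg}(A)\) and \(\qquot(H)\) is available.

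Next I would unwind the \(H\)-Hopf Galois hypothesis on \(A/B\): by definition this says that the canonical map \(\can=\can_H\) of~\eqref{eq:canonical-map} is an isomorphism, in particular an epimorphism, so the standing assumption of Corollary~\ref{cor:Q-Galois_closed} on \(\can_H\) is met.

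The only substantive ingredient is that \(A/A^{co\,Q}\) is \(Q\)-Galois for \emph{every} \(Q\in\qquot(H)\). This is precisely \cite[Cor.~3.3]{ps-hs:gen-hopf-galois}, applicable because \(H\) is finite dimensional and \(A/A^{co\,H}\) is \(H\)-Galois: finite-dimensionality forces the induced map \(\can_Q\) to remain an isomorphism for each generalised quotient. Feeding this, together with the two checks above, into Corollary~\ref{cor:Q-Galois_closed} shows that every \(Q\) is closed. The main obstacle is entirely absorbed into this cited result; everything on our side is routine verification of hypotheses.
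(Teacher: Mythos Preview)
Your argument is correct and matches the paper's own reasoning: the paper simply remarks that, by \cite[Cor.~3.3]{ps-hs:gen-hopf-galois}, finite-dimensionality of \(H\) forces \(A/A^{co\,Q}\) to be \(Q\)-Galois for every \(Q\), and then Proposition~\ref{prop:finite_case} follows immediately from Corollary~\ref{cor:Q-Galois_closed}. Your explicit verification of the hypotheses (existence of the Galois connection over a field, epimorphicity of \(\can_H\)) just spells out what the paper leaves implicit.
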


Now, we adopt~\cite[Def.~3.3]{ps:gal-cor-hopf-bigal} to our setting.
\begin{definition}\label{defi:admissible_II}
    Let \(C\) be a \(R\)-coalgebra and let \(C\smpr{}\widetilde{C}\) be
    a coalgebra quotient. Then \(\widetilde{C}\) is called left (right)
    admissible if it is \(R\)-flat (hence faithfully flat) and \(C\) is
    left (right) faithfully coflat over~\(\widetilde{C}\). 

    Let \(S\) belong to \(\Sub_\Alg(A/B)\) for an \(H\)-extension \(A/B\).
    Then~\(S\) is called right (left) admissible if:
    \begin{enumerate}[topsep=0pt,noitemsep]
	\item \(A\) is right (left) faithfully flat over~\(S\),
	\item for right admissibility, the composition: 
	    \begin{align*}
	    \can_S:                 & A\otimes_SA\sir A\otimes_{A^{co\,\psi(S)}}A\mpr{\can_{\psi(S)}}A\otimes\psi(S),
	    \intertext{is a bijection, while for left admissibility the map:}
	    \can_{S^{\mathit{op}}}: & A^{\mathit{op}}\otimes_{S^{\mathit{op}}}A^{\mathit{op}}\sir A^{\mathit{op}}\otimes_{(A^{\mathit{op}})^{co\,\psi(S^{\mathit{op}})}}A^{\mathit{op}}\mpr{\can_{\psi(S^{\mathit{op}})}}A^{\mathit{op}}\otimes\psi(S^{\mathit{op}}),
	    \end{align*}
	    is a bijection. These maps are well defined since \(S\subseteq
		A^{co\,\psi(S)}\). 
	\item \(\psi(S)\) is flat over~\(R\).
    \end{enumerate}
    An element is called admissible if it is both left and right admissible.
\end{definition}
Note that the definition is symmetric in the sense that \(S\subseteq A\) is
right (left) admissible if and only if \(S^{\mathit{op}}\subseteq
A^{\mathit{op}}\) is left (right) admissible.
\begin{remark}\label{rem:faithfully_flatness}
    Let \(A/B\) be an \(H\)-extension, such that the Galois
    correspondence~\eqref{eq:galois-connection} exists. Let
    \(S\in\Sub_\Alg(A/A^{co\,H})\). Then the following holds:
    \begin{enumerate}[noitemsep,nolistsep]
	\item[(i)] if \(\can_S:A\otimes_SA\sir A\otimes\psi(S)\) is an
	    isomorphism and \(A\) is right or left is faithfully flat
	    over~\(S\) then~\(S\) is a closed element
	    of~\eqref{eq:galois-connection};
	\item[(ii)] if the natural projection \(A\otimes_SA\eir
		A\otimes_{A^{co\,\psi(S)}}A\) is a bijection and \(A\) is
	    right or left faithfully flat over \(S\) then
	    \(S=A^{co\,\psi(S)}\), i.e. \(S\) is closed element of
	    \(\Sub_\Alg(A/B)\) in~\eqref{eq:galois-connection}.
    \end{enumerate}
\end{remark}
\begin{proof}
    First let us prove (2). For this let us consider the commutative diagram:
    \begin{center}
	\begin{tikzpicture}
	    \matrix[column sep=1cm,row sep=0.7cm]{
		\node (A) {\(S\)}; & \node (B) {\(A\)}; & \node (C) {\(A\otimes_SA\)};\\
		\node (D) {\(A^{co\,\psi(S)}\)}; & \node (E) {\(A\)}; & \node (F) {\(A\otimes_{A^{co\,\psi(S)}}A\)};\\
	    };
	    \draw[->,dashed] (D) -- (A);
	    \node[yshift=-7mm,rotate=90] at (B) {\(=\)};
	    \draw[->] (C) --node[above,rotate=-90]{\(\simeq\)} (F);
	    \node[xshift=10mm] at (A) {\(\subseteq\)};
	    \node[xshift=10mm] at (D) {\(\subseteq\)};
	    \draw[->] ($(B)+(2mm,1mm)$) -- ($(C)+(-7mm,1mm)$);
	    \draw[->] ($(B)+(2mm,-1mm)$) -- ($(C)+(-7mm,-1mm)$);
	    \draw[->] ($(E)+(2mm,1mm)$) -- ($(F)+(-11mm,1mm)$);
	    \draw[->] ($(E)+(2mm,-1mm)$) -- ($(F)+(-11mm,-1mm)$);
	\end{tikzpicture}
    \end{center}
    The maps \(A\sir A\otimes_SA\) and \(A\sir A\otimes_{A^{co\,\psi(S)}}A\)
    send \(a\in A\) to \(a\otimes 1_A\) or \(1_A\otimes a\) in appropriate
    tensor product.  Since the diagram commutes and the upper row is an
    equaliser, by faithfully flat descent, the dashed arrow exists, i.e.
    \(A^{co\,\psi(S)}\subseteq S\). We get the equality since, \(S\subseteq
	A^{co\,\psi(S)}\) holds by the Galois connection property.

    The first claim follows from (2) when applied to \(S\subseteq
	A^{co\,\psi(S)}\). It was first observed 
    in~\cite[Rem.~1.2]{hs:normal-bases}.
\end{proof}
The Remark~\ref{rem:faithfully_flatness}(ii) for division algebras also follows
from predual Jacobson-Bourbaki
correspondence~\cite[Thm~2.1]{ms:predual-jacobson-bourbaki}.

We introduce the following notation: for an algebra \(A\), \(A^{\mathit{op}}\)
denotes the opposite algebra which multiplication is given by
\(m^{\mathit{op}}(a\otimes b)=ba\).  If \(H\) is a Hopf algebra with bijective
antipode \(S_H\) then the algebra \(H^{\mathit{op}}\) is a Hopf algebra with
the same comultiplication and the antipode \(S_{H^{\mathit{op}}}=S_H^{-1}\).
For a generalised quotient \(Q=H/I\in\qquot(H)\) we put
\(Q^{\mathit{op}}\coloneq H^{\mathit{op}}/S_{H}(I)\in\qquot(H^{\mathit{op}})\)
and we also write \(I^{\mathit{op}}\coloneq S_H(I)\).
\begin{theorem}\label{thm:Galois_closedness}
    Let \(H\) be a Hopf algebra with bijective antipode.  Let \(A/B\) be an
    \(H\)-extension such that \(A/B\) is \(H\)-Galois,
    \(A^{\mathit{op}}/B^{\mathit{op}}\)  is \(H^{\mathit{op}}\)-Galois and
    \(A\) is faithfully flat as both left and right \(B\)-module. Let us
    assume that the Galois connection~\eqref{eq:galois-connection} exists.
    Furthermore, let \(A\) be faithfully flat over~\(R\). Then the Galois
    connection~\eqref{eq:galois-connection} gives rise to a bijection between
    (left, right) admissible objects (thus (left, right) admissible objects
    are closed). 
\end{theorem}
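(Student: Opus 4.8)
The plan is to reduce everything to Proposition~\ref{prop:properties-of-adjunction}(2), which already gives that the closed subalgebras $\ov{\Sub_{\Alg}(A)}$ and the closed quotients $\ov{\qquot(H)}$ are in mutually inverse bijection under the restrictions of $\psi$ and $\phi$. Thus it is enough to prove (a) that every admissible object, on either side, is closed, and (b) that $\psi$ sends admissible subalgebras to admissible quotients while $\phi$ sends admissible quotients to admissible subalgebras. Properties (a) and (b) then force $\psi$ and $\phi$ to restrict to mutually inverse bijections between the two classes of admissible objects. Throughout I would use that $\can_H$ is an isomorphism, in particular an epimorphism, since $A/B$ is $H$-Galois.

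The subalgebra side of (a) is immediate. If $S$ is right admissible then, by Definition~\ref{defi:admissible_II}, the map $\can_S$ is a bijection; but $\can_S$ factors as the natural projection $p\colon A\otimes_SA\twoheadrightarrow A\otimes_{A^{co\,\psi(S)}}A$ (surjective because $S\subseteq A^{co\,\psi(S)}$) followed by $\can_{\psi(S)}$. Injectivity of the composite forces $p$ to be injective, hence bijective, and since $A$ is right faithfully flat over $S$, Remark~\ref{rem:faithfully_flatness}(ii) yields $S=A^{co\,\psi(S)}$, i.e. $S$ is closed. For a left admissible $S$ I would run the same argument for $S^{\mathit{op}}\subseteq A^{\mathit{op}}$, which is legitimate because $A^{\mathit{op}}/B^{\mathit{op}}$ is $H^{\mathit{op}}$-Galois and admissibility is symmetric under $(-)^{\mathit{op}}$ by the remark after Definition~\ref{defi:admissible_II}.

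For the quotient side the real work is a dictionary between the coalgebra-theoretic and the module-theoretic data: given that $A/B$ is faithfully flat $H$-Galois, I would show that $H$ is (appropriately one-sided) faithfully coflat over $Q$ if and only if $A$ is faithfully flat over $A^{co\,Q}$, and that in this case $A/A^{co\,Q}$ is automatically $Q$-Galois. Granting this, closedness of an admissible $Q$ follows from Corollary~\ref{cor:Q-Galois_closed}, whose hypothesis that $\can_H$ be epimorphic holds here. To establish the dictionary I would descend along the canonical isomorphism $A\otimes_BA\xrightarrow{\ \sim\ }A\otimes H$: realising $A^{co\,Q}$ as the equaliser of $\delta_Q$ and $a\mapsto a\otimes\pi_Q(1_H)$, faithful coflatness of $H$ over $Q$ keeps the corresponding cotensor sequence exact after applying the faithfully flat functors $A\otimes_B-$ and $-\otimes_BA$, so that the kernel of $A\otimes_BA\twoheadrightarrow A\otimes_{A^{co\,Q}}A$ matches $\can_H^{-1}(A\otimes I)$, where $Q=H/I$. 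This identifies $\can_Q$ as a bijection and exhibits $A$ as faithfully flat over $A^{co\,Q}$. This Schneider--Schauenburg-type descent is exactly where the bijective antipode and the $H^{\mathit{op}}$-Galois hypothesis are needed, to move between the left-handed faithful coflatness and the right-handed faithful flatness.

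It remains to verify (b). If $S$ is admissible then $\psi(S)$ is $R$-flat by condition~(3) of Definition~\ref{defi:admissible_II}, and $H$ is faithfully coflat over $\psi(S)$ by the dictionary applied to the closed subalgebra $S=A^{co\,\psi(S)}$; hence $\psi(S)$ is admissible. Conversely, if $Q$ is admissible, write $B'=A^{co\,Q}=\phi(Q)$ and use $\psi\phi(Q)=Q$, the closedness just proved: then condition~(3) for $B'$ is the $R$-flatness of $Q$, condition~(1) is faithful flatness of $A$ over $A^{co\,Q}$, and condition~(2), since $\psi(B')=Q$ makes $\can_{B'}$ equal to $\can_Q$, is precisely the $Q$-Galois property --- all three were obtained above, so $\phi(Q)$ is admissible. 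With (a) and (b) in hand, $\psi$ and $\phi$ restrict to mutually inverse bijections between admissible subalgebras and admissible generalised quotients. The main obstacle is the descent step of the third paragraph; the subalgebra side, by contrast, is a formal consequence of Remark~\ref{rem:faithfully_flatness}.
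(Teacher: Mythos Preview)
Your strategy --- show that admissible objects are closed and that $\psi,\phi$ exchange admissible objects, then invoke Proposition~\ref{prop:properties-of-adjunction}(2) --- is exactly the paper's route, which it attributes to Schauenburg and carries out via Lemma~\ref{lem:galois_and_op}, Proposition~\ref{prop:Schauenburg_admissibility} and Proposition~\ref{prop:Schauenburg_oposite}. Your treatment of the subalgebra side (Remark~\ref{rem:faithfully_flatness} plus passage to $(-)^{\mathit{op}}$) matches the paper's. The main difference is on the quotient side: where you sketch a descent ``dictionary'' (faithful coflatness of $H$ over $Q$ $\Leftrightarrow$ faithful flatness of $A$ over $A^{co\,Q}$, plus $Q$-Galois), the paper simply invokes \cite[Thm~1.4]{hs:normal-bases} applied to $A^{\mathit{op}}$ to obtain that $\phi(Q)$ is admissible and $A/A^{co\,Q}$ is $Q$-Galois, then uses Corollary~\ref{cor:Q-Galois_closed}. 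Your sketch captures the forward direction of that theorem, but your argument is not yet complete in the reverse direction.

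Concretely: in step (b) you need, from right admissibility of $S$, that $H$ is faithfully coflat over $\psi(S)$. Your ``dictionary'' as stated would require the implication ``$A$ faithfully flat over $A^{co\,Q}\Rightarrow H$ faithfully coflat over $Q$'', but this is not what the descent argument in your third paragraph produces (it goes the other way), nor is it true without further input. The paper handles this direction differently: once $\can_S:A\otimes_SA\cong A\otimes\psi(S)$ is known, faithful flatness of $A$ over $B$ and over $S$ makes $A$ faithfully \emph{coflat} over $H$ and over $\psi(S)$, and then the associativity isomorphism $A\square_H(H\square_{\psi(S)}V)\cong A\square_{\psi(S)}V$ forces $H$ to be faithfully coflat over $\psi(S)$. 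So the two directions of your dictionary are established by quite different mechanisms; you should separate them rather than presenting a single iff. With that adjustment, and with the $(-)^{\mathit{op}}$ bookkeeping made precise as in Lemma~\ref{lem:galois_and_op} and Proposition~\ref{prop:Schauenburg_oposite}, your plan coincides with the paper's proof.
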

Note that if \(B=A^{co\,H}\) is contained in the center of \(A\) (as it is
assumed in~\cite{ps:gal-cor-hopf-bigal}) then if \(A/B\) is \(H\)-Galois then
\(A^{\mathit{op}}/B^{\mathit{op}}\) is \(H^{\mathit{op}}\)-Galois. We have
a commutative diagram:
\begin{center}
    \begin{tikzpicture}
	\matrix[column sep=1.2cm,row sep=1cm]{
	    \node (A1) {\(A^{\mathit{op}}\otimes_{B^{\mathit{op}}}A^{\mathit{op}}\)}; & \node (B1) {\(A^{\mathit{op}}\otimes H^{\mathit{op}}\)};\\
	    \node (A2) {\(A\otimes_{B}A\)}; & \node (B2) {\(A\otimes H\)};\\
	};
	\draw[->] (A1) --node[above]{\(\can_{B^{\mathit{op}}}\)} (B1); 
	\draw[->] (A2) --node[below]{\(\can_{B}\)} (B2); 
	\draw[->] (A1) --node[left]{\(\tau\)} (A2);
	\draw[->] (B1) --node[right]{\(\alpha\)} (B2);
    \end{tikzpicture}
\end{center}
where \(\tau(a\otimes_{B^{\mathit{op}}} b)=b\otimes_B a\) is well defined
since \(B=B^{\mathit{op}}\) is contained in the center of \(A\) and
\(\alpha(a\otimes h)=a_{(0)}\otimes S_H^{-1}(h)a_{(1)}\) is an isomorphism with
inverse \(\alpha^{-1}(a\otimes h)=a_{(0)}\otimes a_{(1)}S_H(h)\).

The proof of~\cite[Thm~3.6]{ps:gal-cor-hopf-bigal} applies (with minor changes
which we spot below). Here we consider a right \(H\)-comodule algebra,
while \citeauthor{ps:gal-cor-hopf-bigal} considers left \(L(H,A)\)-comodule
algebra structure. The proof relies on~\cite[Rem.~1.2 and Thm
1.4]{hs:normal-bases}. We let \((\phi^{\mathit{op}},\psi^{\mathit{op}})\)
denote the Galois connection~\eqref{eq:galois-connection} for the
\(H^{\mathit{op}}\)-comodule algebra \(A^{\mathit{op}}\). We start with
a basic lemma:
\begin{lemma}\label{lem:galois_and_op}
    Let \(H\) be a Hopf algebra with bijective antipode. Let \(A/B\) be an
    \(H\)-extensions such that the Galois
    connection~\eqref{eq:galois-connection} exists. Then it also exists for
    the \(H^{\mathit{op}}\)-comodule algebra \(A^{\mathit{op}}\). Furthermore,
    \(\phi^{\mathit{op}}(Q)=\bigl(\phi(Q)\bigr)^{\mathit{op}}\) and
    \(\psi(S^{\mathit{op}})=\bigl(\psi(S)\bigr)^{\mathit{op}}\), where
    \(S\in\Sub_\Alg(A/B)\) and \(Q\in\qquot(H)\). 
\end{lemma}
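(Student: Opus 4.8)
The plan is to reduce the statement to a single pointwise identity, namely $\phi^{\mathit{op}}(Q^{\mathit{op}})=\bigl(\phi(Q)\bigr)^{\mathit{op}}$, that is, $(A^{\mathit{op}})^{co\,Q^{\mathit{op}}}=A^{co\,Q}$ as subsets of the common underlying $R$-module of $A$ and $A^{\mathit{op}}$, and then to obtain everything else (existence of the connection for $A^{\mathit{op}}$ and the formula for $\psi^{\mathit{op}}$) by transporting the known Galois connection for $A$ along order isomorphisms. First I would record that $A^{\mathit{op}}$ is a right $H^{\mathit{op}}$-comodule algebra with coaction equal to $\delta_A$ itself: the same underlying map is an algebra homomorphism $A^{\mathit{op}}\to A^{\mathit{op}}\otimes H^{\mathit{op}}$, and it is a coaction because $\Delta$ and $\epsilon$ are unchanged in $H^{\mathit{op}}$. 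In particular, for $Q=H/I$ the definition of coinvariants reads $a\in(A^{\mathit{op}})^{co\,Q^{\mathit{op}}}\iff\delta_A(a)-a\otimes 1\in A\otimes S_H(I)$, to be compared with $a\in A^{co\,Q}\iff\delta_A(a)-a\otimes 1\in A\otimes I$, exactly as in the proof of Theorem~\ref{thm:existence}.

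Next I would set up two order isomorphisms: $\sigma_H\colon\qquot(H)\to\qquot(H^{\mathit{op}})$, $H/I\mapsto H^{\mathit{op}}/S_H(I)$, which is well defined because $S_H$ carries a right ideal coideal of $H$ to one of $H^{\mathit{op}}$ (using $\Delta S_H=(S_H\otimes S_H)\tau\Delta$, $\epsilon S_H=\epsilon$ and $S_H(H)=H$), is inclusion-reversing on ideals hence $\succcurlyeq$-preserving on quotients, and is involutive since $S_{H^{\mathit{op}}}=S_H^{-1}$; and $\sigma_A\colon\Sub_\Alg(A)\to\Sub_\Alg(A^{\mathit{op}})$, which is the identity on underlying subsets and so is trivially an order isomorphism. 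Granting the pointwise identity one gets $\phi^{\mathit{op}}=\sigma_A\circ\phi\circ\sigma_H^{-1}$; since $\phi$ reflects all suprema (this is precisely what makes the connection for $A$ exist, by Theorem~\ref{thm:existence-of-adjunction}) and order isomorphisms preserve suprema, $\phi^{\mathit{op}}$ reflects all suprema as well. As $\qquot(H^{\mathit{op}})$ is a complete lattice, Theorem~\ref{thm:existence-of-adjunction} then yields the Galois connection $(\psi^{\mathit{op}},\phi^{\mathit{op}})$ for $A^{\mathit{op}}$.

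The hard part will be the pointwise identity, which I would prove by a Sweedler-calculus computation. Starting from $a\in A^{co\,Q}$ and applying $\delta_A\otimes\id$ with coassociativity gives the three-legged identity $a_{(0)}\otimes a_{(1)}\otimes\pi_Q(a_{(2)})=a_{(0)}\otimes a_{(1)}\otimes\bar 1$ in $A\otimes H\otimes Q$. The key device is the contraction $\theta\colon H\otimes Q\to Q^{\mathit{op}}$, $h\otimes\bar g\mapsto\overline{h\,S_H(g)}$, which is well defined precisely because $S_H(I)$ is a left ideal of $H$, so that $H\cdot S_H(I)\subseteq S_H(I)$. Applying $\id_A\otimes\theta$ to the three-legged identity, the left-hand side collapses by the antipode axiom $\sum h_{(1)}S_H(h_{(2)})=\epsilon(h)1$ and the comodule counit axiom to $a\otimes\bar 1$ in $A\otimes Q^{\mathit{op}}$, while the right-hand side becomes $a_{(0)}\otimes\pi_{Q^{\mathit{op}}}(a_{(1)})$; this is exactly the coinvariance condition for $Q^{\mathit{op}}$, giving $A^{co\,Q}\subseteq(A^{\mathit{op}})^{co\,Q^{\mathit{op}}}$. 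The reverse inclusion follows by applying the same argument to $A^{\mathit{op}},H^{\mathit{op}},Q^{\mathit{op}}$ and invoking the involutivity $(A^{\mathit{op}})^{\mathit{op}}=A$, $(Q^{\mathit{op}})^{\mathit{op}}=Q$.

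Finally, the formula $\psi^{\mathit{op}}(S^{\mathit{op}})=\bigl(\psi(S)\bigr)^{\mathit{op}}$ follows either from uniqueness of the left adjoint (Proposition~\ref{prop:properties-of-adjunction}(3)), since $\sigma_H\circ\psi\circ\sigma_A^{-1}$ paired with $\phi^{\mathit{op}}$ is a Galois connection, or directly from the supremum formula of Theorem~\ref{thm:existence}: substituting $P=Q^{\mathit{op}}$ in $\psi^{\mathit{op}}(S^{\mathit{op}})=\bigvee\{P\in\qquot(H^{\mathit{op}}):S^{\mathit{op}}\subseteq(A^{\mathit{op}})^{co\,P}\}$ and using the pointwise identity turns the defining condition into $S\subseteq A^{co\,Q}$, and since $\sigma_H$ preserves suprema this supremum equals $(\psi(S))^{\mathit{op}}$. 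Thus the only genuinely computational point is the crux identity of the third paragraph, while the rest is formal transport of structure.
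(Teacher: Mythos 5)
Your proposal is correct, but it takes a genuinely different route from the paper on both nontrivial points. For the identity $\phi^{\mathit{op}}(Q^{\mathit{op}})=\bigl(\phi(Q)\bigr)^{\mathit{op}}$ the paper simply cites \cite[Prop.~3.5]{ps:gal-cor-hopf-bigal}, whereas you prove it from scratch; your contraction $\theta\colon H\otimes Q\to Q^{\mathit{op}}$, $h\otimes\bar g\mapsto\overline{hS_H(g)}$, is well defined exactly as you say (since $HS_H(I)=S_H(IH)\subseteq S_H(I)$), and the antipode/counit collapse does yield $a\otimes\bar 1=a_{(0)}\otimes\pi_{Q^{\mathit{op}}}(a_{(1)})$, with the reverse inclusion by involutivity --- this makes the lemma self-contained, which is a genuine gain. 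The existence part (transport of ``reflects suprema'' along the two poset isomorphisms) coincides with the paper's argument. For $\psi^{\mathit{op}}(S^{\mathit{op}})=\bigl(\psi(S)\bigr)^{\mathit{op}}$ the paper passes to kernels $\overline{\psi}(S)=\bigwedge\{I: S\subseteq A^{co\,H/I}\}$ and verifies by hand, in its equation~\eqref{eq:wedge_and_op}, that $\bigwedge_{I}I^{\mathit{op}}=\bigl(\bigwedge_{I}I\bigr)^{\mathit{op}}$; both of your alternatives (uniqueness of the adjoint via Proposition~\ref{prop:properties-of-adjunction}(3), or the supremum formula plus the fact that an order isomorphism preserves suprema) subsume that computation and are cleaner. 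One slip to fix: $S_H$ is inclusion-\emph{preserving} on right ideal coideals, not inclusion-reversing; the induced map $H/I\mapsto H^{\mathit{op}}/S_H(I)$ is then order-preserving on $\qquot$ because the order reversal in passing from ideals to quotients occurs on both sides and cancels. This matters, since if $\sigma_H$ were order-reversing then $\phi^{\mathit{op}}=\sigma_A\circ\phi\circ\sigma_H^{-1}$ would fail to be antitone; fortunately the conclusion you actually use (that $\sigma_H$ is an order isomorphism) is the correct one.
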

\begin{proof}
    The first equation:
    \(\phi^{\mathit{op}}(Q)=\bigl(\phi(Q)\bigr)^{\mathit{op}}\) is proved
    in~\cite[Prop.~3.5]{ps:gal-cor-hopf-bigal}. 
    Both maps: \(\Sub_\Alg(A/B)\ni
	S\selmap{}S^{\mathit{op}}\in\Sub_\Alg(A^{\mathit{op}}/B^{\mathit{op}})\)
    and \(\qquot(H)\ni Q\selmap{}Q^{\mathit{op}}\in\qquot(H^{\mathit{op}})\)
    are isomorphisms of posets thus \(\phi\) reflacts suprema if and only if
    \(\phi^{\mathit{op}}\) reflacts them.  It remains to show that
    \(\psi^{\mathit{op}}(S^{\mathit{op}})=\bigl(\psi(S)\bigr)^{\mathit{op}}\).
    First, let us observe that for any set \(\mathcal{O}\) of right ideal
    coideals we have \(\bigwedge_{I\in
	    \mathcal{O}}I^{\mathit{op}}=\left(\bigwedge_{I\in
		\mathcal{O}}I\right)^{\mathit{op}}\):
    \begin{equation}\label{eq:wedge_and_op}
	\bigwedge_{I\in \mathcal{O}}I^{\mathit{op}} = \mathop{+}\limits_{\substack{J\in\qid(H^{\mathit{op}})\\ J\subseteq\mathop\cap\limits_{I\in \mathcal{O}}I^{\mathit{op}}}} J
	= \mathop{+}\limits_{\substack{J\in\qid(H)\\J^{\mathit{op}}\subseteq(\mathop\cap\limits_{I\in \mathcal{O}}I)^{\mathit{op}}}}J^{\mathit{op}}
	= \mathop{+}\limits_{\substack{J\in\qid(H)\\J\subseteq\mathop\cap\limits_{I\in \mathcal{O}}I}}J^{\mathit{op}} = \Bigl(\bigwedge_{I\in \mathcal{O}}I\Bigr)^{\mathit{op}}
    \end{equation}
    Let \(\overline{\psi}(S)=\ker(H\sir\psi(S))\).  Now using formula~\eqref{eq:galois-connection} for
    \(\psi\) we get:
    \begin{equation*}
            \overline{\psi}(S)=\bigwedge\{I:\ S\subseteq A^{co\,H/I}\},\quad \overline{\psi}^{\mathit{op}}(S^{\mathit{op}})=\bigwedge\{I^{\mathit{op}}:\ S\subseteq (A^{\mathit{op}})^{co\,H^{\mathit{op}}/I^{\mathit{op}}}\}
    \end{equation*}
    since
    \(\bigl(A^{co\,H/I}\bigr)^{\mathit{op}}=(A^{\mathit{op}})^{co\,H^{\mathit{op}}/I^{\mathit{op}}}\)
    and \(\qquot(H)\ni I\selmap{}I^{\mathit{op}}\coloneq
	S_{H^{\mathit{op}}}(I)\in\qquot(H^{\mathit{op}})\) is a bijection the
    two above sets are in bijective correspondence:
    \(I\selmap{}I^{\mathit{op}}\). Now, the formula
    \(\psi^{\mathit{op}}(S^{\mathit{op}})=\bigl(\psi(S)\bigr)^{\mathit{op}}\)
    follows from equation~\eqref{eq:wedge_and_op}.  
\end{proof}
\begin{proposition}\label{prop:Schauenburg_admissibility}
    Let \(H\) be a Hopf algebra with bijective antipode. Let \(A\) be
    \(H\)-extension of \(B\) such that \(A\) is \(H\)-Galois,
    \(A^{\mathit{op}}\) is \(H^{\mathit{op}}\)-Galois and the Galois
    connection~\eqref{eq:galois-connection} exists, \(_BA\) and \(A_B\) are
    faithfully flat and also it is faithfully flat as an \(R\)-module. Then:
    \begin{enumerate}[noitemsep,nolistsep]
	\item[(1)] if \(S\in\Sub(A/B)\) is right admissible then so is
	    \(\psi(S)\) and \(\phi\psi(S)=S\),
	\item[(2)] if \(Q\in\qquot(H)\) is left admissible then so is
	    \(\phi(Q)\coloneq A^{co\,Q}\) and \(\psi\phi(Q)=Q\). 
    \end{enumerate}
\end{proposition}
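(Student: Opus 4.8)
The plan is to prove the two claims in a symmetric fashion, using the opposite-algebra machinery of Lemma~\ref{lem:galois_and_op} to deduce one from the other, and to invoke Remark~\ref{rem:faithfully_flatness} to convert the equality of canonical maps into closedness. I will prove~(1) in detail and then obtain~(2) by applying~(1) to the \(H^{\mathit{op}}\)-comodule algebra \(A^{\mathit{op}}\).

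For claim~(1), assume \(S\in\Sub_\Alg(A/B)\) is right admissible. I must show three things: that \(A\) is left (or right) faithfully coflat over \(\psi(S)\), that the canonical map for \(\psi(S)\) is bijective, and that \(\psi(S)\) is \(R\)-flat; and separately that \(\phi\psi(S)=S\). First I would handle the equality \(\phi\psi(S)=A^{co\,\psi(S)}=S\). By right admissibility, the composite \(\can_S:A\otimes_S A\sir A\otimes_{A^{co\,\psi(S)}}A\mpr{\can_{\psi(S)}}A\otimes\psi(S)\) is a bijection, and in particular its first leg, the natural projection \(A\otimes_S A\eir A\otimes_{A^{co\,\psi(S)}}A\), is a monomorphism; combined with bijectivity of the whole composite it is an isomorphism. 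Since \(A\) is faithfully flat over \(S\) (on one side), Remark~\ref{rem:faithfully_flatness}(ii) applies verbatim and yields \(S=A^{co\,\psi(S)}=\phi\psi(S)\). This already shows \(S\) is a closed element of \(\Sub_\Alg(A/B)\).

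It remains to show \(\psi(S)\) is itself right admissible, i.e. to verify the coalgebra-side conditions of Definition~\ref{defi:admissible_II}. Flatness of \(\psi(S)\) over \(R\) is part of the definition of right admissibility of \(S\) (condition~(3)), so this is free. For the canonical map \(\can_{\psi(S)}:A\otimes_{A^{co\,\psi(S)}}A\sir A\otimes\psi(S)\): having just identified \(A^{co\,\psi(S)}=S\), the map \(\can_{\psi(S)}\) is literally the second leg of \(\can_S\), and since \(\can_S\) is bijective and its first leg is now an isomorphism, \(\can_{\psi(S)}\) is a bijection as well. Thus \(A/A^{co\,\psi(S)}\) is \(\psi(S)\)-Galois. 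The faithful coflatness of \(A\) over \(\psi(S)\) is the genuinely new ingredient: here I would follow the argument of~\cite[Thm~3.6]{ps:gal-cor-hopf-bigal}, transcribed to the right-comodule setting via the isomorphism \(\alpha(a\otimes h)=a_{(0)}\otimes S_H^{-1}(h)a_{(1)}\) displayed before the lemma. Using that \(A\) is \(\psi(S)\)-Galois together with faithful flatness of \(A\) over \(S=A^{co\,\psi(S)}\) (and over \(R\)), one shows by descent along the canonical map that coflatness of \(A\) over \(\psi(S)\) is governed by flatness of \(A\) over \(S\), invoking~\cite[Rem.~1.2 and Thm~1.4]{hs:normal-bases}. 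This is the step I expect to be the main obstacle, since it is where the Hopf-Galois hypotheses on both \(A/B\) and \(A^{\mathit{op}}/B^{\mathit{op}}\) and the two-sided faithful flatness are actually consumed; the bookkeeping of which side (left versus right) flatness and coflatness live on must be tracked carefully.

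Finally, for claim~(2), let \(Q\in\qquot(H)\) be left admissible. By the symmetry recorded after Definition~\ref{defi:admissible_II}, \(Q^{\mathit{op}}\in\qquot(H^{\mathit{op}})\) is right admissible for the \(H^{\mathit{op}}\)-comodule algebra \(A^{\mathit{op}}\). All the hypotheses of the proposition are stable under passing to opposites (\(A^{\mathit{op}}\) is \(H^{\mathit{op}}\)-Galois, its opposite \(A\) is \(H\)-Galois, the relevant faithful flatnesses swap sides, and the Galois connection for \(A^{\mathit{op}}\) exists by Lemma~\ref{lem:galois_and_op}). Hence part~(1), applied to \(A^{\mathit{op}}\), gives that \(\psi^{\mathit{op}}(Q^{\mathit{op}})\) is right admissible and \(\phi^{\mathit{op}}\psi^{\mathit{op}}(Q^{\mathit{op}})=Q^{\mathit{op}}\). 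Translating back through the identities \(\phi^{\mathit{op}}(Q)=(\phi(Q))^{\mathit{op}}\) and \(\psi^{\mathit{op}}(S^{\mathit{op}})=(\psi(S))^{\mathit{op}}\) of Lemma~\ref{lem:galois_and_op}, and using that \((-)^{\mathit{op}}\) is an order isomorphism that interchanges left and right admissibility, I obtain that \(\phi(Q)\) is left admissible and \(\psi\phi(Q)=Q\), which is exactly claim~(2).
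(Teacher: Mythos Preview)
Your treatment of~(1) is mostly sound through the closedness step \(\phi\psi(S)=S\), but you misread what right admissibility of the \emph{quotient} \(\psi(S)\) requires. By Definition~\ref{defi:admissible_II} a quotient \(\widetilde C\) of \(H\) is right admissible when it is \(R\)-flat and \(H\) (not \(A\)) is right faithfully coflat over it; there is no canonical-map condition on the coalgebra side. The paper does establish faithful coflatness of \(A\) over \(\psi(S)\) as an intermediate step, from \(A\otimes_S A\cong A\otimes\psi(S)\) and faithful flatness of \(A_S\), but then it needs one further move you omit: the natural isomorphism \(A\cotensor_H(H\cotensor_{\psi(S)}V)\cong A\cotensor_{\psi(S)}V\), combined with faithful coflatness of \(A\) over \(H\), forces \(H\cotensor_{\psi(S)}-\) to be faithful. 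Without that step you have not shown \(\psi(S)\) admissible.

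Your derivation of~(2) from~(1) via opposites is a type error. Part~(1) takes as input a right admissible \emph{subalgebra}; you feed it the \emph{quotient} \(Q^{\mathit{op}}\in\qquot(H^{\mathit{op}})\), and then write \(\psi^{\mathit{op}}(Q^{\mathit{op}})\), which is not defined. Passing to \(A^{\mathit{op}}\) converts ``\(S\) right admissible \(\Rightarrow\psi(S)\) right admissible, \(\phi\psi(S)=S\)'' into the corresponding left-handed statement, still about subalgebras; it does not yield~(2). The paper proves~(2) independently: since \(Q\) is left admissible, \(Q^{\mathit{op}}\) is right admissible, so \cite[Thm~1.4]{hs:normal-bases} applied to \(A^{\mathit{op}}\) makes \(A^{\mathit{op}}/(A^{co\,Q})^{\mathit{op}}\) a \(Q^{\mathit{op}}\)-Galois extension with the requisite faithful flatness; Corollary~\ref{cor:Q-Galois_closed} for \(A^{\mathit{op}}\) then gives \(\psi^{\mathit{op}}\phi^{\mathit{op}}(Q^{\mathit{op}})=Q^{\mathit{op}}\), and Lemma~\ref{lem:galois_and_op} transports this to \(\psi\phi(Q)=Q\), after which left admissibility of \(\phi(Q)\) can be read off.
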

\begin{proof}
    We first prove (2): \(\psi(Q)\) is left admissible
    by~\cite[Thm~1.4]{hs:normal-bases} applied to \(A^{\mathit{op}}\). The
    equality
    \(\psi^{\mathit{op}}\phi^{\mathit{op}}(Q^{\mathit{op}})=Q^{\mathit{op}}\)
    follows from Corollary~\ref{cor:Q-Galois_closed}. Using
    Lemma~\ref{lem:galois_and_op} we conclude
    \(\bigl(\psi\phi(Q)\bigr)^{\mathit{op}}=Q^{\mathit{op}}\), and since
    \(\qquot(H)\ni Q\selmap{}Q^{\mathit{op}}\in\qquot(H^{\mathit{op}})\) is
    a bijection we get \(\psi\phi(Q)=Q\). Now, let us assume that
    \(S\in\Sub(A/B)\) is right admissible. Then by
    Remark~\ref{rem:faithfully_flatness}(i) \(S=\phi\psi(S)\) and \(A/S\) is
    \(Q\)-Galois. Since, \(A\) is faithfully flat \(R\)-module and \(A_S\) is
    faithfully flat, it follows from the isomorphism \(A\otimes_SA\cong
	A\otimes\psi(S)\) that \(\psi(S)\) is faithfully flat \(R\)-module. It
    remains to show that \(H\) is faithfully coflat as right
    \(\psi(S)\)-comodule. For this we observe, as
    in~\cite[Prop.~3.4]{ps:gal-cor-hopf-bigal}, that for any left
    \(\psi(S)\)-comodule \(V\) there is an isomorphism:
    \(A\cotensor_H(H\cotensor_{\psi(S)}V)\cong(A\cotensor_H
	H)\cotensor_{\psi(S)}V\cong A\cotensor_{\psi(S)}V\), where the first
    isomorphism exists since \(A\) is coflat right \(H\)-comodule. Now, \(H\)
    is faithfully coflat right \(\psi(S)\) comodule since \(A\) faithfully
    coflat as both right \(H\) and \(\psi(S)\)-comodule (what follows from
    faithfully flatness of \(A\) as right \(B\) and \(S\)-module and the
    canonical isomorphisms: \(\can_B:A\otimes_BA\cong A\otimes H\) and
    \(\can_S:A\otimes_SA\cong A\otimes\psi(S)\)).
\end{proof}

\begin{proposition}\label{prop:Schauenburg_oposite}
    Let \(H\) be a flat \(R\)-Hopf algebra with bijective antipode.  The map
    \(\qquot(H)\ni I\selmap{}I^{\mathit{op}}\coloneq
	S_H(I)\in\qquot(H^{\mathit{op}})\) is a bijection with inverse
    \(\qquot(H^{\mathit{op}})\ni J\selmap{}J^{\mathit{op}}\coloneq
	S_{H^{\mathit{op}}}(J)\in\qquot(H)\).  The coideal \(S_H(I)\) is right
    (left) admissible if and only if \(I\) is right (left) admissible.
\end{proposition}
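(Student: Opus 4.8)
The plan is to prove Proposition~\ref{prop:Schauenburg_oposite} in two stages: first establish that \(I\selmap{}S_H(I)\) is a well-defined bijection \(\qquot(H)\sir\qquot(H^{\mathit{op}})\) with the stated inverse, and then transfer the admissibility conditions across this bijection. For the first stage, I would verify that if \(I\) is a right ideal coideal of \(H\), then \(S_H(I)\) is a right ideal coideal of \(H^{\mathit{op}}\). The key point is that the antipode \(S_H\colon H\sir H^{\mathit{op}}\) is a coalgebra morphism (it reverses multiplication but preserves comultiplication up to the flip, which is exactly what is needed when passing to \(H^{\mathit{op}}\)); hence \(S_H(I)\) is a coideal of \(H^{\mathit{op}}\). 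That \(S_H(I)\) is a \emph{right} ideal of \(H^{\mathit{op}}\) follows because a right ideal for the opposite multiplication is a left ideal for the original, and \(S_H\) sends right ideals to left ideals. Flatness of \(H\) is used here to guarantee that \(S_H(I)\) is again the kernel of a Hopf-compatible quotient, so that \(Q^{\mathit{op}}=H^{\mathit{op}}/S_H(I)\) genuinely lives in \(\qquot(H^{\mathit{op}})\). Applying the same construction to \(H^{\mathit{op}}\), whose antipode is \(S_H^{-1}\), gives the inverse assignment \(J\selmap{}S_{H^{\mathit{op}}}(J)\), and \(S_{H^{\mathit{op}}}\circ S_H=\id\) on \(I\) establishes the bijection.

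For the second stage, I would show the equivalence of admissibility coordinatewise against the three conditions of Definition~\ref{defi:admissible_II}. The crucial observation is the symmetry already recorded after Definition~\ref{defi:admissible_II}: \(S\subseteq A\) is right admissible if and only if \(S^{\mathit{op}}\subseteq A^{\mathit{op}}\) is left admissible. The plan is to dualise this into a statement about generalised quotients. Using Lemma~\ref{lem:galois_and_op}, which gives \(\phi^{\mathit{op}}(Q)=(\phi(Q))^{\mathit{op}}\) and \(\psi^{\mathit{op}}(S^{\mathit{op}})=(\psi(S))^{\mathit{op}}\), I can translate the defining data of admissibility of \(I\in\qquot(H)\) — namely \(R\)-flatness of the quotient, the faithful coflatness of \(H\) over it, the faithful flatness of \(A\) over the corresponding coinvariants, and bijectivity of the relevant canonical map — into the corresponding data for \(S_H(I)=I^{\mathit{op}}\in\qquot(H^{\mathit{op}})\). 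Concretely, \(R\)-flatness of \(H/I\) is carried to \(R\)-flatness of \(H^{\mathit{op}}/I^{\mathit{op}}\) because the two quotients are isomorphic as \(R\)-modules via the (bijective) antipode; faithful coflatness of \(H\) over \(H/I\) becomes faithful coflatness of \(H^{\mathit{op}}\) over \(H^{\mathit{op}}/I^{\mathit{op}}\) by the same module isomorphism applied to the cotensor products; and the canonical-map condition transfers through the isomorphism \(\alpha(a\otimes h)=a_{(0)}\otimes S_H^{-1}(h)a_{(1)}\) appearing in the discussion before Lemma~\ref{lem:galois_and_op}, which intertwines \(\can_B\) for \(A\) with \(\can_{B^{\mathit{op}}}\) for \(A^{\mathit{op}}\).

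The main obstacle I anticipate is the bookkeeping around left versus right in the coflatness and faithful-flatness conditions, since passing to \(H^{\mathit{op}}\) swaps sides: a right ideal coideal of \(H\) corresponds to a right ideal coideal of \(H^{\mathit{op}}\), but right coflatness over the quotient for \(H\) must be matched against the correct handedness for \(H^{\mathit{op}}\), and a sign error here would invert the claimed ``right (left) admissible if and only if right (left) admissible.'' I would resolve this by carefully tracking the identity \((A^{co\,H/I})^{\mathit{op}}=(A^{\mathit{op}})^{co\,H^{\mathit{op}}/I^{\mathit{op}}}\) from the proof of Lemma~\ref{lem:galois_and_op}, which fixes the correspondence of coinvariant subalgebras, and then invoking the symmetry remark after Definition~\ref{defi:admissible_II} to conclude that admissibility of one handedness for \(I\) is equivalent to the same handedness for \(I^{\mathit{op}}\). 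Once the dictionary is in place, each of the three conditions transfers by a single application of the antipode isomorphism, and the proposition follows with no further computation.
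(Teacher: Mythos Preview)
Your first stage (the bijection) is essentially fine, though the remark that ``flatness of \(H\) is used here to guarantee that \(S_H(I)\) is again the kernel of a Hopf-compatible quotient'' is off --- the bijection is purely a consequence of \(S_H\) being a bijective anti-algebra, anti-coalgebra map with \(S_H^{-1}=S_{H^{\mathit{op}}}\); no flatness enters.

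The second stage rests on a misreading of Definition~\ref{defi:admissible_II}. For a \emph{quotient} \(Q=H/I\), admissibility consists only of the two conditions in the first paragraph of that definition: \(R\)-flatness of \(Q\) and (left or right) faithful coflatness of \(H\) over \(Q\). There is no comodule algebra \(A\), no coinvariant subalgebra, and no canonical map in play; those three items belong to the separate notion of admissibility for \emph{subalgebras}. So your plan to ``transfer the canonical-map condition through the isomorphism \(\alpha\)'' and to invoke the symmetry remark after Definition~\ref{defi:admissible_II} (which is about subalgebras) is aimed at the wrong object. What is actually needed is short but specific: the antipode gives an \(R\)-linear bijection \(H/I\cong H/S_H(I)\), which settles flatness; for coflatness one turns a left \(H/S_H(I)\)-comodule \(V\) into a right \(H/I\)-comodule via \(v\mapsto v_{(0)}\otimes S_H^{-1}(v_{(1)})\) and verifies the natural isomorphism \(H\cotensor_{H/S_H(I)}V\cong V\cotensor_{H/I}H\), \(\sum_i h_i\otimes v_i\mapsto\sum_i v_i\otimes h_i\). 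Note that this \emph{swaps} sides: it yields that \(S_H(I)\) is right admissible iff \(I\) is \emph{left} admissible (and symmetrically). That swap is exactly how the result is invoked in the proof of Theorem~\ref{thm:Galois_closedness} (``Let \(I\) be right admissible. Then \(I^{\mathit{op}}\) is left admissible''), so the statement as printed appears to contain a typo. Your instinct that the handedness bookkeeping was the delicate point was correct --- but the resolution is that it flips, not that it is preserved.
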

\begin{proof}
    The map \(\qquot(H)\ni I\selmap{}I^{\mathit{op}}\coloneq
	S_H(I)\in\qquot(H^{\mathit{op}})\) is bijection since the antipode
    \(S_H\) is bijective and the inverse \(S_H^{-1}=S_{H^{\mathit{op}}}\).

    The antipode defines an \(R\)-linear bijection \(H/I\cong H/S_H(I)\) thus
    \(H/S_H(I)\) is \(R\)-flat whenever \(H/I\) is. Let \(V\) be a left
    \(H/S_H(I)\)-comodule. We make it a right \(H/I\)-comodule by \(V\ni
	v\selmap{}v_{(0)}\otimes S_H^{-1}(v_{(1)})\). Then we have 
    \begin{align*}
	H\cotensor_{H/S_H(I)}V &\cong V\cotensor_{H/I}H\\
	\sum_{i} h_i\otimes v_i & \selmap{} \sum_{i} v_i\otimes h_i 
    \end{align*}
    It is natural in \(V\), hence if \(H\) is right faithfully coflat over
    \(H/S_H(I)\). This shows that \(S_H(I)\) is right admissible if and only
    if \(I\) is left admissible. That \(I\) is right admissible if and only if
    \(S_H(I)\) is left admissible is proved in the same way.
\end{proof}

Now we present the proof of Theorem~\ref{thm:Galois_closedness} which is due
to Schauenburg.
\begin{proofof}{Theorem~\ref{thm:Galois_closedness}}
    We let \((\phi^{\mathit{op}},\psi^{\mathit{op}})\) denote the Galois
    connection~\eqref{eq:galois-connection} for \(A^{\mathit{op}}\)
    \(H^{\mathit{op}}\)-comodule algebra instead of
    \(A\) -- an \(H\)-comodule algebra. 

    Let \(S\subseteq A\) be left admissible. Then \(S^{\mathit{op}}\subseteq
	A^{\mathit{op}}\) is right admissible, thus
    \(\psi^{\mathit{op}}(S^{\mathit{op}})\) is right admissible and by
    Proposition~\ref{prop:Schauenburg_admissibility}(1),
    \(\phi^{\mathit{op}}\psi^{\mathit{op}}(S^{\mathit{op}})=S^{\mathit{op}}\).
    It follows that
    \(S=\bigl(\phi^{\mathit{op}}\psi^{\mathit{op}}(S^{\mathit{op}})\bigr)^{\mathit{op}}=\phi\bigl(\psi^{\mathit{op}}(S^{\mathit{op}})^{\mathit{op}}\bigr)\),
    by Lemma~\ref{lem:galois_and_op}. Using
    Proposition~\ref{prop:Schauenburg_oposite} we conclude that
    \(\bigl(\psi^{\mathit{op}}(S^{\mathit{op}})\bigr)^{\mathit{op}}\) is left
    admissible thus:
    \(\psi(S)=\psi\phi\bigl(\psi^{\mathit{op}}(S^{\mathit{op}})^{\mathit{op}}\bigr)=\psi^{\mathit{op}}(S^{\mathit{op}})^{\mathit{op}}\),
    hence \(S=\phi\psi(S)\) and \(\psi(S)\) is left admissible. 

    We follow the argument of Schauenburg. Let \(I\) be right admissible. Then
    \(I^{\mathit{op}}\) is left admissible, so is
    \(\phi^{\mathit{op}}(H^{\mathit{op}}/I^{\mathit{op}})=\phi(H/I)^{\mathit{op}}\),
    thus \(\phi(H/I)\) is right admissible, and
    \(H^{\mathit{op}}/I^{\mathit{op}}=\psi^{\mathit{op}}\phi^{\mathit{op}}(H^{\mathit{op}}/I^{\mathit{op}})=\bigl(\psi\phi(H/I)\bigr)^{\mathit{op}}\)
    hence
    \(I^{\mathit{op}}=\bigl(\overline{\psi}\phi(H/I)\bigr)^{\mathit{op}}\) and
    thus \(I=\overline{\psi}\phi(H/I)\), i.e. \(H/I=\psi\phi(H/I)\).
\end{proofof}
 
\section{Galois theory for Galois coextensions}
In this section we describe the Galois theory for Galois coextensions. We
begin with some basic definitions.
\begin{definition}\label{defi:module-coalgebra}
	Let \(C\) be a coalgebra and \(H\) a Hopf algebra, both over a ring
	\(R\). We call \(C\) an \mybf{\(H\)-module coalgebra} if it is an
	\(H\)-module such that the \(H\)-action \mbox{\(H\otimes C\sir C\)} is
	a coalgebra map:
	\begin{equation*}
	  \Delta_C(h\cdot c)=\Delta_C(h)\Delta_H(c),\quad\epsilon_C(h\cdot c)=\epsilon_C(h)\epsilon_H(c).
	\end{equation*}
	Let \(C^H\coloneq\nicefrac{C}{H^+C}\) denote the \mybf{invariant
	    coalgebra}. We call \(C\sir C^H\) an \(H\)\textsf{-coexten\-sion}.
\end{definition}
\begin{definition}\label{defi:coGalois}
	An \(H\)-module coalgebra \(C\) is called an \mybf{\(H\)-Galois
	    coextension} if the canonical map
	\[\can_H:H\otimes C\sir C\cotensor_{C^H}C,\quad h\otimes c\elmap{}hc_{(1)}\otimes c_{(2)}\]	
	is a bijection, where $C$ is considered as a left and right
	$C^H$-comodule in a standard way. More generally, if
	\(K\in\coid{l}(H)\) is a left coideal then \(K^+C\) is a coideal (at
	least, when the base ring \(R\) is a field) and an \(H\)-submodule of
	\(C\).  The coextension \(C\sir C^K=\nicefrac{C}{K^+C}\) is called
	Galois if the canonical map
	\begin{equation}\label{eq:can_K}
		\can_K:K\otimes C\sir C\cotensor_{C^K}C,\quad k\otimes c\elmap{}kc_{(1)}\otimes c_{(2)}
	\end{equation}
	is a bijection.
\end{definition}
Basic example of an \(H\)-module coalgebra is \(H\) itself.  Then \(H^H=R\)
and \(H\cotensor_{H^H}H=H\otimes H\). The inverse of the canonical map is
given by \(\can_H^{-1}(k\otimes h)=kS\left(h_{(1)}\right)\otimes h_{(2)}\).
\begin{definition}
	Let \(C\) be an \(H\)-module coalgebra. We let \(\Quot(C)=\{C/I:\
		I-\text{ a coideal of }C\}\). It is a complete lattice. We let
	\(\Quot(\nicefrac{C}{C^H})\) denote the interval \(\{Q\in\Quot(C):
		C^H\leq Q\leq C\}\) in \(\Quot(C)\).
\end{definition}
\begin{proposition}\label{prop:cogalois-extensions}
    Let \(C\) be an \(H\)-module coalgebra over a field~\(k\). Then there
    exists a \textsf{Galois connection}:
    \begin{align}\label{eq:galois-for-module-coalgebras}
	\Quot(\nicefrac{C}{C^{H}})\lgalois{}{C/(I+k1_H)^+C\ellmap{}I}\coid{l}(H)
    \end{align}
\end{proposition}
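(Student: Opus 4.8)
The plan is to build the left adjoint by invoking the existence theorem for Galois connections (Theorem~\ref{thm:existence-of-adjunction}), taking the explicitly given map
\(G\colon\coid{l}(H)\to\Quot(\nicefrac{C}{C^{H}})\), \(G(I)=C/(I+k1_H)^+C\), as the starting datum. Since \(\coid{l}(H)\) is a complete lattice, it suffices to check that \(G\) is well defined, anti-monotone, and reflects all suprema (sends sums of left coideals to infima of quotients). A Galois connection \((G,F)\) between \(\coid{l}(H)\) and \(\Quot(\nicefrac{C}{C^{H}})\) is the same datum as the connection~\eqref{eq:galois-for-module-coalgebras} with the two maps interchanged, so this will finish the proof.

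First I would check \(G\) is well defined. For a left coideal \(I\) the subspace \(I+k1_H\) is again a left coideal (it is the sum of \(I\) and the subcoalgebra \(k1_H\)), so by the remark in Definition~\ref{defi:coGalois}, valid because the base is a field, \((I+k1_H)^+C\) is a coideal of \(C\) and \(G(I)\) is a genuine coalgebra quotient. Moreover \((I+k1_H)^+\subseteq\ker\epsilon=H^+\), whence \((I+k1_H)^+C\subseteq H^+C\); as \(H^+C\) is the coideal defining \(C^H\), this gives \(C^H\leq G(I)\leq C\), so \(G(I)\in\Quot(\nicefrac{C}{C^{H}})\). The boundary cases confirm the conventions: \(G(0)=C/(k1_H)^+C=C\) since \((k1_H)^+=0\), and \(G(H)=C/H^+C=C^H\); thus \(G\) sends the bottom and top of \(\coid{l}(H)\) to the top and bottom of the interval, as an order-reversing map should. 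Anti-monotonicity is immediate, since \(I_1\subseteq I_2\) forces \((I_1+k1_H)^+C\subseteq(I_2+k1_H)^+C\), i.e.\ a larger coideal and hence a smaller quotient.

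The heart of the argument is that \(G\) reflects suprema. The supremum of a family \((I_\alpha)_\alpha\) in \(\coid{l}(H)\) is the sum \(\sum_\alpha I_\alpha\), while the infimum of the corresponding quotients is computed by the sum of the defining coideals, \(C/\sum_\alpha(I_\alpha+k1_H)^+C\) (this sum lies in \(H^+C\), so the infimum stays inside the interval, and therefore agrees with the infimum taken in \(\Quot(C)\)). Reflection of suprema thus amounts to the identity \((\sum_\alpha I_\alpha+k1_H)^+C=\sum_\alpha(I_\alpha+k1_H)^+C\), for which it is enough to prove the purely linear identity \((\sum_\alpha I_\alpha+k1_H)^+=\sum_\alpha(I_\alpha+k1_H)^+\) before multiplying into \(C\). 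This is precisely where the correction term \(k1_H\) is needed: it makes \(I\mapsto(I+k1_H)^+\) linear, because \((I+k1_H)^+=\{\,i-\epsilon(i)1_H:i\in I\,\}\) is the image of \(I\) under the linear map \(i\mapsto i-\epsilon(i)1_H\). Writing any \(j\in\sum_\alpha I_\alpha\) as a finite sum \(j=\sum_\alpha j_\alpha\) with \(j_\alpha\in I_\alpha\) and using \(j-\epsilon(j)1_H=\sum_\alpha\bigl(j_\alpha-\epsilon(j_\alpha)1_H\bigr)\) gives the inclusion \(\subseteq\), the reverse being clear. Without \(k1_H\) the naive operation \(I\mapsto I\cap\ker\epsilon\) fails to be additive, so this correction is essential.

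Having verified these three properties, Theorem~\ref{thm:existence-of-adjunction} applied with \(P=\coid{l}(H)\) (complete) and the anti-monotone map \(G\) in the role of the theorem's map yields a left adjoint \(F\colon\Quot(\nicefrac{C}{C^{H}})\to\coid{l}(H)\), completing the Galois connection~\eqref{eq:galois-for-module-coalgebras}. The only genuinely non-formal step, and hence the main obstacle, is the coideal identity of the previous paragraph; the remainder is bookkeeping about the order conventions on \(\Quot\) and \(\coid{l}\) and the completeness of both lattices.
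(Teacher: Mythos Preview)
Your proposal is correct and follows essentially the same route as the paper: both apply Theorem~\ref{thm:existence-of-adjunction} with the complete lattice \(\coid{l}(H)\), and both reduce the reflection of suprema to the identity \(\bigl(\sum_\alpha I_\alpha+k1_H\bigr)^+=\sum_\alpha(I_\alpha+k1_H)^+\), proved via the decomposition \(j_\alpha=(j_\alpha-\epsilon(j_\alpha)1_H)+\epsilon(j_\alpha)1_H\). Your reformulation of \((I+k1_H)^+\) as the image of \(I\) under the linear projection \(i\mapsto i-\epsilon(i)1_H\) is a slightly cleaner packaging of exactly the same computation the paper carries out by hand.
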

\begin{proof}
	The supremum in \(\coid{l}(H)\) is given be sum of submodules. Thus
	the lattice of left coideals is complete. Furthermore, if \(I\) is
	a right coideal then \(I+k1_H\) is also a right coideal. It is enough
	to show that the map \(\coid{l}(H)\ni I\mapsto I^+C\in\coId(C)\)
	preserves all suprema when we restrict to right coideals which
	contain~\(1_H\). Let \(I_\alpha\in\qid(H)\), for \(\alpha\in\Lambda\),
	then \((+_\alpha I_\alpha)^+=+_\alpha(I_\alpha^+)\).  The non trivial
	inclusion is \((+_\alpha I_\alpha)^+\subseteq+_\alpha(I_\alpha^+)\).
	Let \(k=\sum_\alpha k_\alpha\in(+_\alpha I_\alpha)^+\), i.e.
	\(k_\alpha\in I_\alpha\) and \(\sum_\alpha k_\alpha\in\ker\epsilon\).
	Then \(\sum_\alpha
	    k_\alpha=\sum_\alpha(k_\alpha-\epsilon(k_\alpha)1)+\sum_\alpha(\epsilon(k_\alpha)1)=\sum_\alpha(k_\alpha-\epsilon(k_\alpha)1)\).
	Now each \(k_\alpha-\epsilon(k_\alpha)1\in I_\alpha^+\) and hence
	\(k\in +_\alpha(I_\alpha^+)\).
\end{proof}
\begin{theorem}\label{thm:coGalois_mono}
    Let \(C\) be an \(H\)-module coalgebra over a field \(k\) with monomorphic
    canonical map \(\can_H\). Let \(K_1,K_2\) be two left coideals of \(H\)
    such that both \(\can_{K_1}\) and \(\can_{K_2}\) are bijections.  Then
    \(K_1=K_2\) whenever \(C^{K_1}=C^{K_2}\). 
\end{theorem}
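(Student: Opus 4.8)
The plan is to dualise the proof of Proposition~\ref{prop:mono}, replacing tensor products over subalgebras by cotensor products over quotient coalgebras and reversing every arrow, so that the \emph{surjectivity} of the canonical map used there becomes the \emph{injectivity} of \(\can_H\) here. Write \(D\coloneq C^{K_1}=C^{K_2}\). Since \(K_i\subseteq H\) we have \(K_i^+C\subseteq H^+C\), so \(C^H=\nicefrac{C}{H^+C}\) is a quotient coalgebra of \(D=\nicefrac{C}{K_i^+C}\); corestricting the \(D\)-coactions on \(C\) along the projection \(D\twoheadrightarrow C^H\) exhibits \(C\cotensor_DC\) as a subspace of \(C\cotensor_{C^H}C\), with inclusion \(\iota_{CC}\). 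Both \(\can_{K_1}\) and \(\can_{K_2}\) land in the common space \(C\cotensor_DC\) and are bijections by hypothesis. First I would assemble the commutative diagram dual to the one in Proposition~\ref{prop:mono}: its outer objects are \(K_1\otimes C\) and \(K_2\otimes C\), its centre is \(H\otimes C\), and writing \(\iota_i\colon K_i\hookrightarrow H\) for the inclusions, the two triangles commute because both \(\can_H\circ(\iota_i\otimes\id_C)\) and \(\iota_{CC}\circ\can_{K_i}\) send \(k\otimes c\) to \(kc_{(1)}\otimes c_{(2)}\).

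Next I would set \(g\coloneq(\iota_2\otimes\id_C)\circ\can_{K_2}^{-1}\circ\can_{K_1}\colon K_1\otimes C\to H\otimes C\). Using the two commuting triangles, \(\can_H\circ g=\iota_{CC}\circ\can_{K_1}=\can_H\circ(\iota_1\otimes\id_C)\). Since \(\can_H\) is a monomorphism, hence left-cancellable, this forces \(g=\iota_1\otimes\id_C\). This is exactly the dual of the step in Proposition~\ref{prop:mono} where surjectivity of \(\can\) forced \(f=\id\otimes\pi_1\).

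Finally I would compare images. On one hand \(g=\iota_1\otimes\id_C\) has image \(K_1\otimes C\) inside \(H\otimes C\); on the other hand \(g\) factors through \(\iota_2\otimes\id_C\), whose image is \(K_2\otimes C\). Hence \(K_1\otimes C\subseteq K_2\otimes C\) inside \(H\otimes C\). Over the field \(k\), applying \((H\twoheadrightarrow\nicefrac{H}{K_2})\otimes\id_C\) and using \(C\neq0\) yields \(K_1\subseteq K_2\). Swapping the roles of \(K_1\) and \(K_2\) gives the reverse inclusion, so \(K_1=K_2\).

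The steps I expect to need the most care are the two that rely on the base being a field: that \(K_i^+C\) is a coideal so that \(C^{K_i}\) is a genuine quotient coalgebra and the inclusion \(C\cotensor_DC\hookrightarrow C\cotensor_{C^H}C\) is available (Definition~\ref{defi:coGalois}), and the final cancellation \(K_1\otimes C\subseteq K_2\otimes C\Rightarrow K_1\subseteq K_2\), which uses \(C\neq0\) and flatness of \(C\). The one genuinely new verification, beyond formally reversing the arrows of Proposition~\ref{prop:mono}, is the commutativity of the triangles, i.e.\ that each \(\can_{K_i}\) is literally the corestriction of \(\can_H\) along \(K_i\hookrightarrow H\).
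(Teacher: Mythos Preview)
Your proof is correct and follows essentially the same route as the paper: the paper draws the identical diagram, uses injectivity of \(\can_H\) to conclude \((\iota_2\otimes\id)\circ\can_{K_2}^{-1}\circ\can_{K_1}=\iota_1\otimes\id\), and deduces \(K_1\subseteq K_2\) (and symmetrically \(K_2\subseteq K_1\)). Your version is simply more explicit about the inclusion \(C\cotensor_DC\hookrightarrow C\cotensor_{C^H}C\) and about why \(K_1\otimes C\subseteq K_2\otimes C\) forces \(K_1\subseteq K_2\) over a field with \(C\neq0\), points the paper leaves implicit.
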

\begin{proof}
    We have the following commutative diagram:
    \begin{center}
	\begin{tikzpicture}[>=angle 60,thick]
	\matrix[matrix,column sep=7mm,row sep=7mm]{
	    &                         & \node(AQ1) {\(K_1\otimes C\)};\\
	    \node(A) {\(C\cotensor_{C^{K_i}} C\)}; & \node(B) {\(C\cotensor_{C^H}C\)}; & \node(C)   {\(H\otimes C\)};\\
	    &                         & \node(AQ2) {\(K_2\otimes C\)};\\
	};
	\begin{scope}
	    \draw[<-] (A) -- node[above]{$\can_{K_1}$} (AQ1);
	    \draw[<-] (A) -- node[below]{$\can_{K_2}$} (AQ2);
	    \draw[>->] (A) -- (B);
	    \draw[<-<] (B) -- node[above,pos=.4]{$\can$}(C);
	    \draw[<-] (C) -- node[right]{$i_1\otimes\id$}(AQ1);
	    \draw[<-] (C) -- node[right]{$i_2\otimes\id $}(AQ2);
	\end{scope}
	\end{tikzpicture} 
    \end{center}
    It follows that \(i_2\otimes\id\circ
	\left(\can_{K_2}\circ\can_{K_1}^{-1}\right)=i_1\otimes\id\) and thus
    \(K_1\subseteq K_2\); similarly \(K_2\subseteq K_1\).
\end{proof}
\begin{corollary}\label{cor:coGalois_closed_elements}
    Let \(C\) be an \(H\)-coextension with monomorphic canonical map
    \(\can_H\). Then~\(K\) - a left coideal of~\(H\) such that \(1_H\in K\) is
    a closed element of Galois
    connection~\eqref{eq:galois-for-module-coalgebras} if \(C\) is
    \(K\)-Galois.
\end{corollary}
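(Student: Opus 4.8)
The plan is to dualise the proof of Corollary~\ref{cor:Q-Galois_closed} essentially verbatim, using Theorem~\ref{thm:coGalois_mono} in place of Proposition~\ref{prop:mono}. Write \((F,G)\) for the connection~\eqref{eq:galois-for-module-coalgebras}, so that \(G\colon\coid{l}(H)\sir\Quot(\nicefrac{C}{C^H})\) is \(G(I)=C/(I+k1_H)^+C\); since \(1_H\in K\) we have \(G(K)=C/K^+C=C^K\), and the closure of \(K\) is \(\widetilde K:=FG(K)=F(C^K)\). By definition \(K\) is closed exactly when \(\widetilde K=K\), which is what I would establish. Two facts come for free from the general theory (Definition~\ref{defi:Galois_connection} and Proposition~\ref{prop:properties-of-adjunction}): first, \(K\subseteq\widetilde K\), so in particular \(1_H\in\widetilde K\); second, the identity \(GFG=G\) gives \(C^{\widetilde K}=G(\widetilde K)=GFG(K)=G(K)=C^K\), i.e. \(K^+C=\widetilde K^+C\).

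The crux is to show that \(C\) is \(\widetilde K\)-Galois. From \(K\subseteq\widetilde K\subseteq H\) and \(K^+C=\widetilde K^+C\subseteq H^+C\) one gets quotient coalgebra maps \(C\sir C^K=C^{\widetilde K}\sir C^H\), and I would fit the three canonical maps into a commutative diagram with rows \(\can_K\), \(\can_{\widetilde K}\), \(\can_H\). The left-hand vertical arrows are the maps \(K\otimes C\hookrightarrow\widetilde K\otimes C\hookrightarrow H\otimes C\) induced by the coideal inclusions; these are monomorphisms because \(-\otimes C\) is exact over the field \(k\). The right-hand vertical arrows are the inclusions of cotensor products inside \(C\otimes C\), namely \(C\cotensor_{C^K}C=C\cotensor_{C^{\widetilde K}}C\hookrightarrow C\cotensor_{C^H}C\), where the upper one is an equality (as \(C^K=C^{\widetilde K}\)) and the lower one is an honest inclusion since a larger quotient coalgebra yields a smaller equaliser. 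Both squares commute because all three canonical maps are given by the single formula \(k\otimes c\elmap{}kc_{(1)}\otimes c_{(2)}\).

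Reading the diagram yields both that \(\can_{\widetilde K}\) is onto and that it is mono. From the top square: the composite of \(K\otimes C\hookrightarrow\widetilde K\otimes C\) with \(\can_{\widetilde K}\) equals \(\can_K\) followed by the identity \(C\cotensor_{C^K}C=C\cotensor_{C^{\widetilde K}}C\); since \(\can_K\) is onto by hypothesis, so is \(\can_{\widetilde K}\). From the bottom square: the composite of \(\can_{\widetilde K}\) with the mono \(C\cotensor_{C^{\widetilde K}}C\hookrightarrow C\cotensor_{C^H}C\) equals \(\can_H\) precomposed with the mono \(\widetilde K\otimes C\hookrightarrow H\otimes C\), which is a monomorphism because \(\can_H\) is; hence \(\can_{\widetilde K}\) is mono. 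Therefore \(\can_{\widetilde K}\) is bijective and \(C\) is \(\widetilde K\)-Galois. Now Theorem~\ref{thm:coGalois_mono}, applied to the pair \(K,\widetilde K\) (both canonical maps bijective, \(C^K=C^{\widetilde K}\), and \(\can_H\) monomorphic), forces \(K=\widetilde K\), so \(K\) is closed.

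The one genuinely delicate point I would expect is the bookkeeping of the cotensor inclusions: checking that a surjection \(C^{\widetilde K}\sir C^H\) of quotient coalgebras induces a true inclusion \(C\cotensor_{C^{\widetilde K}}C\subseteq C\cotensor_{C^H}C\), and that the identification \(C\cotensor_{C^K}C=C\cotensor_{C^{\widetilde K}}C\) holds on the nose. This is also where working over a field is essential, both so that \(-\otimes C\) is exact (giving the left verticals as monomorphisms) and so that \(K^+C\) is genuinely a coideal (Definition~\ref{defi:coGalois}); everything else is a formal transcription of the comodule-algebra case.
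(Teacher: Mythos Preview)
Your proof is correct and follows essentially the same route as the paper's own argument: both introduce the closure \(\widetilde K\supseteq K\), use \(GFG=G\) (implicitly in the paper) to get \(C^{K}=C^{\widetilde K}\), assemble the same three-row commutative diagram, read off that \(\can_{\widetilde K}\) is onto from the \(K/\widetilde K\) square and mono from the \(\widetilde K/H\) square, and conclude \(K=\widetilde K\) via Theorem~\ref{thm:coGalois_mono}. Your write-up is in fact more explicit than the paper's on the points you flag as delicate (the direction of the cotensor inclusion and the role of the field hypothesis), but the structure is the same; the only cosmetic difference is that the paper draws the diagram with \(H\) at the top and \(K\) at the bottom, so its ``upper'' and ``lower'' squares are your ``bottom'' and ``top''.
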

\begin{proof}
    Let \(C\) be a \(K\)-Galois coextension, for some left coideal \(K\)of
    \(H\) such that \(1_H\in K\) and let \(\widetilde K\) be the smallest
    closed left coideal such that \(K\subseteq\widetilde K\). Then we have
    the commutative diagram:
    \begin{center}
	\begin{tikzpicture}
	    \node (A) at (0,0) {$H\otimes C$};
	    \node (B) at (3cm,0) {$C\cotensor_{C^H} C$};
	    \node (C) at (0cm,-1.7cm) {$\widetilde K\otimes C$};
	    \node (D) at (3cm,-1.7cm) {$C\cotensor_{C^{\widetilde K}}C$};
	    \node (E) at (0cm,-3.4cm) {$K\otimes C$};
	    \node (F) at (3cm,-3.4cm) {$C\cotensor_{C^{K}}C$};
	    \begin{scope}[>=angle 60,thick]
		\draw[>->] (A) -- node[above]{$\can_H$} (B);
		\draw[>->] (C) -- (A);
		\draw[>->] (D) -- (B);
		\draw[->]  (C) -- node[above]{$\can_{\widetilde K}$} (D);
		\draw[->]  (E) -- node[below]{$\simeq$} node [above]{$\can_K$}  (F);
		\draw[>->] (E) --(C);
		\draw[->] (F) --node[below,rotate=90]{$=$} (D);
	    \end{scope}
	\end{tikzpicture}
    \end{center}
    From lower commutative square it follows that \(\can_{\widetilde K}\) is
    onto, while from the upper that it is a monomorphism. The result follows
    now from the previous theorem.
\end{proof}

\section[The Hopf algebra case]{Correspondence between left coideal
    subalgebras and generalised quotients} 
We show a new simple prove of Takeuchi correspondence between left coideal
subalgebras and right \(H\)-module coalgebra quotients for a finite
dimensional Hopf algebra. We also show that for arbitrary Hopf algebra \(H\)
the generalised quotient \(Q\) is closed if and only if \(H^{co Q}\subseteq
    H\) is \(Q\)-Galois. Similarly for a left coideal subalgebras: it is
closed if and only if \(H\sir H^K\) is a \(K\)-Galois coextension. 
\begin{theorem}\label{thm:newTakeuchi}
    Let $H$ be a flat Hopf algebra over a ring $R$ with bijective antipode.
    The extension $H/R$ is $H$-Galois and there exists a \textsf{Galois
	connection}:
	\begin{equation}\label{eq:galois-for-hopf-alg}
	    \begin{array}{ccc}
		\bigg\{K\subseteq H:\,K\,\hyphen\text{left coideal subalgebra}\bigg\}&\hspace{-.3cm}\galois{\psi}{\phi}&\hspace{-.3cm}\bigg\{H/I:\,I\,\hyphen\text{right ideal coideal}\bigg\}\\[-2mm]
		=:\qsub(H)&&=:\qquot(H)
	    \end{array}
	\end{equation}
	where $\phi(Q)=H^{co\,Q},\psi(K)=H/K^+H$ is a Galois connection which
	is a restriction of the Galois connection~\eqref{eq:galois-connection}.
	Moreover, this Galois correspondence restricts to normal Hopf
	subalgebras and conormal Hopf quotients.
The following holds:
\begin{enumerate}[topsep=0pt,noitemsep]
    \item[(1)] $K\in\qsub(H)$ such, that $H$ is (left, right) faithfully flat
	over $K$, is a closed element of the above Galois connection,
	\item[(2)] $Q\in\qquot(H)$ such, that $H$ is (left, right) faithfully
	    coflat over $Q$ is a closed element of Galois
	    connection~\eqref{eq:galois-for-hopf-alg},
	\item[(3)] if $H$ is finite dimensional then $\phi$ and $\psi$ are
	    \textsf{inverse bijections}.
\end{enumerate}
The Galois correspondence restricts to bijection between elements
satisfying~(1) and~(2).
\end{theorem}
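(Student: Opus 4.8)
The plan is to deduce the whole statement from the two Galois connections and the closedness criteria already available, together with a single self-duality argument for the finite-dimensional case. First I would check that $H/R$ is $H$-Galois by exhibiting the inverse of the canonical map $\can_H\colon H\otimes_R H\to H\otimes H$, namely $h\otimes h'\mapsto hS(h'_{(1)})\otimes h'_{(2)}$, verified directly from the antipode axioms (flatness of $H$ legitimises the tensor identifications). Existence of the connection~\eqref{eq:galois-for-hopf-alg} I would obtain by specialising Theorem~\ref{thm:existence} to the $H$-comodule algebra $A=H$; the $A=H$ step of its proof uses only flatness, so no Mittag--Leffler hypothesis on $H$ is needed. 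It then remains to see that the maps land in the stated posets and that $\psi$ has the closed form $K\mapsto H/K^+H$. For the first, $H^{co\,Q}$ is a left coideal subalgebra and $K^+H$ is a right ideal coideal by the standard computations. For the second I would prove the biconditional $K\subseteq H^{co\,H/I}\iff K^+H\subseteq I$: the forward direction applies $\epsilon\otimes\id$ to $\Delta(k)-k\otimes1\in H\otimes I$, while the converse uses the left-coideal identity $\Delta(k)-k\otimes1=k_{(1)}\otimes(k_{(2)}-\epsilon(k_{(2)})1)$ with $k_{(2)}\in K$. This identifies $\psi(K)$ and exhibits~\eqref{eq:galois-for-hopf-alg} as a restriction of~\eqref{eq:galois-connection}; the restriction to normal Hopf subalgebras and conormal quotients follows from the classical observation that $K^+H=HK^+$ is a Hopf ideal precisely when $K$ is $\mathrm{ad}$-stable.

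For part~(1), faithful flatness of $H$ over $K$ (on either side) forces the canonical map $\can_K\colon H\otimes_K H\to H\otimes\psi(K)$ to be bijective, by the flat version of Schneider's structure theorem (faithfully flat descent for coideal subalgebras, whose conclusion already includes $H^{co\,\psi(K)}=K$, so the argument is not circular); Remark~\ref{rem:faithfully_flatness}(i) then gives that $K$ is closed. Part~(2) is the dual assertion, and is exactly where the coextension theory of the previous section is needed: since $\can_H$ is an isomorphism, Corollary~\ref{cor:Q-Galois_closed} reduces the closedness of $Q$ to showing that $H/H^{co\,Q}$ is $Q$-Galois, and this is supplied by faithful coflatness of $H$ over $Q$ via the coflat counterpart of Schneider's theorem (compare the coflatness computation in the proof of Proposition~\ref{prop:Schauenburg_admissibility}).

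The finite-dimensional statement~(3) is the crux and the step I expect to require the most care. Since $H/R$ is $H$-Galois and $\dim_k H<\infty$, Proposition~\ref{prop:finite_case} shows that every $Q\in\qquot(H)$ is closed; by Proposition~\ref{prop:properties-of-adjunction}(2) this already makes $\phi$ injective with image the closed subalgebras, so all that remains is to prove that \emph{every} $K\in\qsub(H)$ is closed. Rather than invoke freeness of $H$ over its coideal subalgebras, I would run the same argument on the linear dual $H^*$, again finite-dimensional with bijective antipode. Annihilators give order-reversing bijections $\qquot(H^*)\cong\qsub(H)^{\mathit{op}}$ and $\qsub(H^*)\cong\qquot(H)^{\mathit{op}}$ (a coideal subalgebra annihilates to an ideal coideal, a left coideal to a right ideal, possibly composed with the antipode to match left/right conventions), and under these the connection $(\psi_{H^*},\phi_{H^*})$ is carried onto $(\psi_H,\phi_H)$. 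A short conjugation check then shows that closed elements of $\qquot(H^*)$ transport to closed elements of $\qsub(H)$; since Proposition~\ref{prop:finite_case} applied to $H^*$ makes every element of $\qquot(H^*)$ closed, every $K\in\qsub(H)$ is closed. Hence both posets consist entirely of closed elements and $\phi,\psi$ are inverse bijections by Proposition~\ref{prop:properties-of-adjunction}(2).

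The principal obstacle is precisely this self-duality step: one must verify that the annihilator dictionary identifies $(\psi_{H^*},\phi_{H^*})$ with $(\psi_H,\phi_H)$ on the nose, i.e.\ that $\bigl((H^*)^{co\,H^*/K^\perp}\bigr)^\perp=K^+H$ together with its partner matching $H^{co\,Q}$ across $H$ and $H^*$, and this is the only place where finite-dimensionality is essential. Finally, the bijection between the elements of~(1) and of~(2) follows by combining them with Proposition~\ref{prop:properties-of-adjunction}(2): closed subalgebras and closed quotients already correspond under $\phi,\psi$, and~(1),(2) identify the faithfully-flat $K$ and the faithfully-coflat $Q$ as closed elements; the one extra point, that $\phi,\psi$ carry one class onto the other (equivalently, $H$ is faithfully flat over $K$ iff $H$ is faithfully coflat over $\psi(K)$), is the flat/coflat translation already carried out in the proof of Proposition~\ref{prop:Schauenburg_admissibility}.
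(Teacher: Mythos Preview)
Your proposal is correct and closely parallels the paper's argument. For the existence of the connection, parts~(1), (2), and the final bijection between faithfully-flat and faithfully-coflat elements, you and the paper proceed the same way (via Schneider-type results and the established closedness criteria). For part~(3) both arguments rest on duality with $H^*$ and on the Schauenburg--Schneider fact that for finite-dimensional $H$ every extension $H/H^{co\,Q}$ is $Q$-Galois. The only genuine difference is in how that duality is exploited: the paper verifies $(\psi_H(K))^*=\phi_{H^*}(K^*)$, then records the identification $\can_{K^*}=(\can_K)^*$, concludes that the \emph{coextension} canonical map $\can_K$ is bijective for every $K$, and invokes Theorem~\ref{thm:coGalois_mono} to deduce that $\psi$ is injective; you instead observe that once the annihilator dictionary intertwines the two Galois connections (the very same verification), closedness of every element of $\qquot(H^*)$ (Proposition~\ref{prop:finite_case} applied to $H^*$) transports directly to closedness of every $K\in\qsub(H)$. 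Your route is a little more streamlined in that it bypasses the coextension machinery for~(3); the paper's route has the bonus of actually establishing that $H\to H^K$ is $K$-Galois for every coideal subalgebra $K$, which is of independent interest and feeds into Theorem~\ref{thm:closed-of-qquot}. One minor simplification you could make in~(1): the map $H\otimes_K H\to H\otimes H/K^+H$ is \emph{always} bijective for a left coideal subalgebra $K$ (explicit inverse $h\otimes\overline{h'}\mapsto hS(h'_{(1)})\otimes_K h'_{(2)}$, see~\eqref{eq:canK}), so no appeal to Schneider is needed there; faithful flatness enters only through Remark~\ref{rem:faithfully_flatness}(i).
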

The existence of this Galois correspondence is proved
in~\citep{ps:gal-cor-hopf-bigal}, where the author notes that it is a folklore
even in the case when \(R\) is a ring rather than just a field. Let us note
that existence does not require \(H\) to have a bijective antipode. The points
(1) and (2) follows from Theorem~\ref{thm:Galois_closedness} (due to
Schauenburg, see also~\cite[Thm~3.10]{ps:gal-cor-hopf-bigal}), while point (3)
follows from~\cite[Thm~6.1]{ss:projectivity-over-comodule-algebras}, where it
is shown that if \(H\) is finite dimensional then it is free over every its
right (or left) coideal subalgebra
(see~\cite[Thm~6.6]{ss:projectivity-over-comodule-algebras} and
also~\cite[Cor.~3.3]{ps-hs:gen-hopf-galois}). This theorem has a long history.
The study of this correspondence, with Hopf algebraic method, goes back
to~\cite{mt:correspondence,mt:rel-hopf-mod}. Then Masuoka proved~(1) and~(2)
for Hopf algebras over a field (with bijective antipode). When the base ring is
a field, \citet[Thm~1.4]{hs:exact-seq-qg} proved that this bijection restricts
to normal Hopf subalgebras and normal Hopf algebra quotients. For Hopf
algebras over more general rings it was shown
by~\citet[Thm~3.10]{ps:gal-cor-hopf-bigal}. We can present a new simple proof
of~\ref{thm:newTakeuchi}(3), which avoids Skryabin result. 
\begin{proofof}{Theorem~\ref{thm:newTakeuchi}(3)} Whenever \(H\) is finite
    dimensional, for every \(Q\) the extension \(H^{co Q}\subseteq H\) is
    \(Q\)-Galois by~\cite[Cor.~3.3]{ps-hs:gen-hopf-galois}. Using
    Proposition~\ref{prop:mono} we get that the map \(\phi\) is
    a monomorphism. To show that it is an isomorphism it is enough to prove
    that \(\psi\) is a monomorphism.  We now want to consider \(H^*\).  To
    distinguish \(\phi\) and \(\psi\) for \(H\) and \(H^*\) we will write
    \(\phi_H\) and \(\psi_H\) considering~\eqref{eq:galois-for-hopf-alg} for
    \(H\) and \(\phi_{H^*}\) and \(\psi_{H^*}\) considering \(H^*\). It turns
    out that \((\psi_H(K))^*=\phi_{H^*}(K^*)\). It can be easily shown that
    \(\can_{K^*}=(\can_K)^*:H^*\otimes_{{H^*}^{co\,K^*}}H^*\sir H^*\otimes
	K^*\) under some natural identifications. Because \(H^*\) is finite
    dimensional as well it follows that \(\can_{K^*}\)  is an isomorphism,
    hence \(\can_K\) is a bijection for every right coideal subalgebra \(K\)
    of \(H\). Now the result follows from Theorem~\ref{thm:coGalois_mono} and
    Proposition~\ref{prop:properties-of-adjunction}.
\end{proofof}
\begin{theorem}\label{thm:closed-of-qquot}
    Let $H$ be a flat Hopf algebra over a ring \(R\). Then 
    \begin{enumerate}
	\item[(i)] \(Q\in\qquot(H)\) is a \textsf{closed element} of Galois
	    connection~\eqref{eq:galois-for-hopf-alg} if and only if
	    \(H/H^{co\,Q}\) is a \(Q\)-Galois extension,
	\item[(ii)] \(K\in\qsub(H)\) is a \textsf{closed element} of the
	    Galois connection~\eqref{eq:galois-for-hopf-alg} if and only if
	    \(H\sir H^K\) is a \(K\)-Galois coextension. 
    \end{enumerate}
\end{theorem}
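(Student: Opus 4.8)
The plan is to reduce both parts to a single map. For \(A=C=H\) the extension canonical map~\eqref{eq:canonical-map} and the coextension canonical map of Definition~\ref{defi:coGalois} are literally the same isomorphism
\[
\can_H\colon H\otimes_R H\sir H\otimes H,\qquad a\otimes b\elmap{}ab_{(1)}\otimes b_{(2)},
\]
with inverse \(x\otimes y\mapsto xS(y_{(1)})\otimes y_{(2)}\); only the antipode \(S\) occurs, so flatness of \(H\) is all we shall use. The heart of the argument is a pair of \emph{translation identities}, valid for every left coideal subalgebra \(K\) and every \(Q=H/I\in\qquot(H)\):
\[
\can_H\bigl(\ker(H\otimes_R H\sir H\otimes_K H)\bigr)=H\otimes K^+H,\qquad \can_H\bigl(H^{co\,Q}\otimes H\bigr)=H\cotensor_Q H.
\]
The rest is formal, organised around these two equalities.

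Given them, the diagrams \(\can_Q\circ p=(\id\otimes\pi_Q)\circ\can_H\) (with \(p\colon H\otimes_R H\twoheadrightarrow H\otimes_{H^{co\,Q}}H\)) and \(\can_H\circ(i\otimes\id)=\iota\circ\can_K\) (with \(\iota\colon H\cotensor_Q H\hookrightarrow H\otimes H\)) show at once that \(\can_Q\) is surjective and \(\can_K\) injective, the latter using \(H\) flat. For the \emph{only if} directions: if \(Q\) is closed then \(I=(H^{co\,Q})^+H\), so the first identity gives \(\can_H(\ker p)=H\otimes I\), whence \(\can_Q\) is also injective, hence bijective; if \(K\) is closed then \(K=H^{co\,Q}\) for \(Q=\psi(K)=H/K^+H\), so the second identity gives \(\can_H(K\otimes H)=H\cotensor_Q H\), whence \(\can_K\) is also surjective, hence bijective. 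For the \emph{if} directions I compare an element with its closure. If \(H/H^{co\,Q}\) is \(Q\)-Galois, then its closure \(\widetilde Q=\psi\phi(Q)\) has \(H^{co\,\widetilde Q}=H^{co\,Q}\) and is \(\widetilde Q\)-Galois by the \emph{only if} part, so Proposition~\ref{prop:mono} forces \(Q=\widetilde Q\); this recovers Corollary~\ref{cor:Q-Galois_closed}. Symmetrically, if \(H\sir H/K^+H\) is \(K\)-Galois then \(\widetilde K=\phi\psi(K)\) has \(H^{\widetilde K}=H^{K}\) and is \(\widetilde K\)-Galois, so Theorem~\ref{thm:coGalois_mono} forces \(K=\widetilde K\).

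It remains to prove the two identities, where \(\subseteq\) is immediate from the commuting diagrams, so only \(\supseteq\) is at stake. For the first I would show \(\can_H^{-1}(H\otimes K^+H)\subseteq\ker p\): for \(\kappa\in K^+\), \(h\in H\),
\[
\can_H^{-1}(x\otimes\kappa h)=xS(h_{(1)})S(\kappa_{(1)})\otimes\kappa_{(2)}h_{(2)},
\]
and since \(\kappa_{(2)}\in K\) may be balanced across \(\otimes_K\), its image in \(H\otimes_K H\) equals \(xS(h_{(1)})\,S(\kappa_{(1)})\kappa_{(2)}\otimes h_{(2)}=0\) because \(S(\kappa_{(1)})\kappa_{(2)}=\epsilon(\kappa)1=0\). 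For the second I would check first that \(\can_H(b\otimes h)\) lies in \(H\cotensor_Q H\) whenever \(b\in H^{co\,Q}\), and then, for the reverse, transport the defining cotensor relation of an element \(z=\sum a\otimes b\) through the linear operator
\[
\Theta\colon u\otimes q\otimes v\elmap{}uS(v_{(2)})\otimes q\cdot S(v_{(1)})\otimes v_{(3)};
\]
applying \(\Theta\) to both sides of that relation turns them into the two sides of the coinvariance condition on \(\can_H^{-1}(z)=\sum aS(b_{(1)})\otimes b_{(2)}\), the collapse on one side coming from \(b_{(1)}S(b_{(2)})=\epsilon(b)1\). Flatness of \(H\) is used to identify \(H^{co\,Q}\otimes H\) with the kernel cutting out the coinvariants after tensoring by \(H\).

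The main obstacle is this reverse inclusion of the second identity: one must guess the operator \(\Theta\) and verify, by careful Sweedler calculus with the anti-comultiplicativity \(\Delta S=(S\otimes S)\Delta^{\mathit{op}}\), that it carries each side of the cotensor equation exactly onto the corresponding side of the coinvariance equation. Two hypotheses must be respected throughout: every computation must avoid \(S^{-1}\) (the antipode is not assumed bijective here), and one must only ever deduce a Galois property \emph{from} closedness, never cancel an equality \(H\otimes K^+H=H\otimes I\) or \(K\otimes H=H^{co\,Q}\otimes H\) in the other direction, since \(H\) is assumed flat but not faithfully flat.
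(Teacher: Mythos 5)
Your proposal is correct and is essentially the paper's own argument: your two ``translation identities'' are precisely the isomorphisms~\eqref{eq:canK} and~\eqref{eq:cocanK} in the paper's proof, and the ``if'' directions are handled exactly as there, via Corollary~\ref{cor:Q-Galois_closed} and Theorem~\ref{thm:coGalois_mono}. The only divergence is that for the second identity \(\can_H(H^{co\,Q}\otimes H)=H\cotensor_Q H\) the paper simply cites \cite[Thm~1.4(1)]{hs:normal-bases}, whereas you prove it directly with the operator \(\Theta\); that Sweedler computation does check out (applying \(\Theta\) to the cotensor relation for \(z\) yields exactly the \(Q\)-coinvariance condition for \(\can_H^{-1}(z)\), using \(\Delta S=(S\otimes S)\Delta^{\mathit{op}}\) on one side and \(b_{(1)}S(b_{(2)})=\epsilon(b)1\) on the other), so you have in effect supplied a self-contained proof of the one external input.
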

Note that we do not assume that the antipode of~\(H\) is bijective as it is
done in Theorem~\ref{thm:newTakeuchi}. The flatness of \(H\) is only
needed to show that if \(K\) is closed then \(H\sir H^K\) is a \(K\)-Galois
coextension.
\begin{proof}
    For the first part, it is enough to show that if $Q$ is closed then $H^{co
	Q}\subseteq H$ is a $Q$-Galois (see
    Corollary~\ref{cor:Q-Galois_closed}). If $Q$ is closed then $Q=H/(H^{co
	Q})^+H$. One can show that for any $K\in\qsub(H)$ the following map is
    an isomorphism:
    \begin{equation}\label{eq:canK}
	H\otimes_{K}H\mpr{}H\otimes H/K^+H,\quad h\otimes_K h'\elmap{}hh'_{\mathit{(1)}}\otimes\overline{h'_{\mathit{(2)}}}
    \end{equation}
    Its inverse is given by $H\otimes H/K^+H\ni h\otimes
    \overline{h'}\,\elmap{}\,hS(h'_{\mathit{(1)}})\otimes_K
    h'_{\mathit{(2)}}\in H\otimes_K H$ which is well defined since \(K\) is
    a left coideal. Plugging $K=H^{co\,Q}$ to equation~\eqref{eq:canK} we
    observe that this map is the canonical map~\eqref{eq:canonical-map}
    associated to \(Q\).

    Now, if \(H\sir H/K^+H\) is a \(K\)-Galois coextension then it follows
    from Theorem~\ref{thm:coGalois_mono}, using the same argument as in
    Corollary~\ref{cor:coGalois_closed_elements}, that \(K\) is a closed
    element. Now, let us assume that \(K\) is closed. Then \(K=H^{co\,Q}\) for
    \(Q=H/K^+H\). By~\cite[Thm~1.4(1)]{hs:normal-bases} we have an isomorphism:
    \begin{equation}\label{eq:cocanK}
	H^{co\,Q}\otimes H\ir H\cotensor_QH\quad k\otimes h\elmap{}kh_{(1)}\otimes h_{(2)}
    \end{equation}
    with inverse \(H\cotensor_QH\ni k\otimes h\selmap{}kS(h_{(1)})\otimes
	h_{(2)}\in H^{co\,Q}\otimes H\). The above map is canonical
    map~\eqref{eq:can_K} since \(K=H^{co\,Q}\) and \(Q=H/K^+H\) .
\end{proof}
The above theorem gives an answer to the question when the
bijection~\eqref{eq:galois-for-hopf-alg} holds without extra assumptions.
\begin{corollary}
    The bijective correspondence~\eqref{eq:galois-for-hopf-alg} holds without
    flatness/coflatness assumptions if and only if for every \(Q\in\qquot(H)\)
    \(H/H^{co\,Q}\) is \(Q\)-Galois extension and for every \(K\in\qsub(H)\)
    \(H/H^K\) is a \(K\)-Galois coextension.
\end{corollary}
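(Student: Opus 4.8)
The plan is to read off the corollary directly from the description of closed elements in Theorem~\ref{thm:closed-of-qquot}, combined with the abstract properties of Galois connections collected in Proposition~\ref{prop:properties-of-adjunction}. No new construction is needed: the corollary is a formal consequence of these two results.

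First I would make precise what ``the bijective correspondence~\eqref{eq:galois-for-hopf-alg} holds'' means, namely that $\phi$ and $\psi$ are mutually inverse bijections between the \emph{whole} posets $\qquot(H)$ and $\qsub(H)$, not merely between their subposets of closed elements. By Proposition~\ref{prop:properties-of-adjunction}(1) the images of the two maps are exactly $\psi(\qsub(H))=\overline{\qquot(H)}$ and $\phi(\qquot(H))=\overline{\qsub(H)}$, and by part~(2) these restrictions are \emph{already} inverse bijections $\overline{\qsub(H)}\leftrightarrow\overline{\qquot(H)}$. Consequently the full maps $\phi$ and $\psi$ are inverse bijections precisely when $\overline{\qsub(H)}=\qsub(H)$ and $\overline{\qquot(H)}=\qquot(H)$, i.e.\ when \emph{every} left coideal subalgebra and \emph{every} generalised quotient of $H$ is a closed element. (Equivalently, via Proposition~\ref{prop:properties-of-adjunction}(4), one needs both $\phi$ and $\psi$ to be surjective, the two surjectivities being dual to the two injectivities.)

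Next I would substitute the two characterizations of closedness supplied by Theorem~\ref{thm:closed-of-qquot}: part~(i) states that $Q\in\qquot(H)$ is closed if and only if $H/H^{co\,Q}$ is a $Q$-Galois extension, and part~(ii) states that $K\in\qsub(H)$ is closed if and only if $H\sir H^K$ is a $K$-Galois coextension. Feeding these two equivalences into the condition ``every element is closed'' isolated in the previous paragraph yields exactly the asserted statement. Moreover, because these characterizations refer only to the Galois property of the relevant (co)extensions and make no mention of (co)flatness of $H$ over the subobjects --- in contrast to the sufficient conditions of Theorem~\ref{thm:newTakeuchi}(1)--(3) --- the resulting criterion is the promised (co)flatness-free one.

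Since both ingredients are already established, I expect no genuine obstacle; the proof is essentially a matter of assembling them. The only point deserving care is the bookkeeping of the two maps in Proposition~\ref{prop:properties-of-adjunction}: one must verify that closedness of \emph{both} posets is required, so that both conjuncts of the corollary are genuinely necessary and not merely sufficient. This is precisely what the independence of the injectivity/surjectivity statements in part~(4) guarantees, and it is here that I would be most explicit in the write-up.
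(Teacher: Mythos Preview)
Your proposal is correct and matches the paper's approach: the corollary is stated in the paper without proof, as an immediate consequence of Theorem~\ref{thm:closed-of-qquot} together with the standard fact (Proposition~\ref{prop:properties-of-adjunction}) that a Galois connection is a bijection precisely when every element on each side is closed. Your write-up simply makes this implicit step explicit, and your remark that both conjuncts are genuinely necessary is the only point worth spelling out.
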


\section{Crossed product extensions}
Next we describe closed elements of the Galois
connection~\eqref{eq:galois-connection} when \(A\) is a crossed product. An
\(H\)-extension \(A/B\) (over a ring \(R\)) is called \textsf{cleft} if there
exists a convolution invertible \(H\)-comodule map \(\gamma:H\sir A\).  An
\(H\)-extension \(B\subseteq A\) has the \textsf{normal basis property} if and
only if $A$ is isomorphic to $B\otimes_kH$ as a left $B$-module and right
$H$-comodule. If the base ring~\(R\) is a field then the following conditions
are equivalent:
\begin{enumerate}[topsep=0pt,noitemsep]
    \item[(i)] \(A/B\) is a \textsf{cleft} extension,
    \item[(ii)] \(A/B\) is a Hopf--Galois extension with normal basis
	property,
    \item[(iii)] \(A\) is a crossed product of \(B\) and \(H\), i.e.  there
	exists an invertible cocycle \(\sigma:H^{\otimes2}\sir A\) such that
	\(A\cong B\#_\sigma H\) as \(H\)-comodule algebras.
\end{enumerate}
For the proof see~\cite{yd-mt:cleft-comodule-algebras}
and~\cite{rb-sm:crossed-products}. \noindent We refer to~\citep{sm:hopf-alg}
for the theory of crossed products. Let us recall that the underlying
\(R\)-module of \(B\#_\sigma H\) is \(B\otimes H\) while the multiplication is
given by the formula:
\[a\#_\sigma h\cdot b\#_\sigma k:= a\left(h_{(1)}\cdot b\right)\sigma\left(h_{(2)}\otimes k_{(1)}\right)\#_\sigma h_{(3)}k_{(2)}\]
Note that \(H\) acts on \(B\) in a compatible way (\(H\) measures \(B\),
see~\cite[Def.~7.1.1]{sm:hopf-alg}). Let us note that the results presented
below apply to finite Hopf--Galois extensions of division rings, since they are
always crossed products by~\cite[Thm~8.3.7]{sm:hopf-alg}.
\begin{theorem}\label{thm:cleft-case}
    Let \(A/B\) be an \(H\)-crossed product over a ring~\(R\) with \(B\)
    a flat Mittag-Leffler \(R\)-module and \(H\) a flat \(R\)-module. Then the
    Galois correspondence~\eqref{eq:galois-connection} exists. Moreover, an
    element \(Q\in\qquot(H)\) is \textsf{closed} if and only if the extension
    \(A/A^{co Q}\) is \textsf{\(Q\)-Galois}.
\end{theorem}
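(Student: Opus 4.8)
The plan is to prove existence and the closedness criterion separately, each time reducing to the Hopf algebra $H$ itself, for which the analogous statements are already in hand. By construction, a crossed product $A=B\#_\sigma H$ is, as a left $B$-module and right $H$-comodule, just $B\otimes H$, with coaction $\id_B\otimes\Delta_H$ (post-composed with $\id\otimes\pi_Q$ on passing to a quotient $Q$). Since $B$ is flat, tensoring the exact sequence $0\to H^{co\,Q}\to H\to H\otimes Q$ by $B$ gives $A^{co\,Q}=B\otimes H^{co\,Q}$ for every $Q\in\qquot(H)$. By Theorem~\ref{thm:existence-of-adjunction} it then suffices to show that $\phi\colon Q\mapsto A^{co\,Q}$ reflects suprema, that is $A^{co\,\bigvee_i Q_i}=\bigcap_i A^{co\,Q_i}$ (infima in $\Sub_{\Alg}(A)$ being intersections). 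Here $\bigcap_i A^{co\,Q_i}=\bigcap_i\bigl(B\otimes H^{co\,Q_i}\bigr)=B\otimes\bigcap_i H^{co\,Q_i}$, the second equality being the intersection property of the flat Mittag--Leffler module $B$ applied to the submodules $H^{co\,Q_i}\subseteq H$, whereas $A^{co\,\bigvee_i Q_i}=B\otimes H^{co\,\bigvee_i Q_i}$. Thus everything collapses to $H^{co\,\bigvee_i Q_i}=\bigcap_i H^{co\,Q_i}$, which is exactly the reflection of suprema for the Galois connection~\eqref{eq:galois-for-hopf-alg}, available because $H$ is flat.

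For the implication $(\Leftarrow)$ of the closedness criterion, note that a crossed product is cleft, hence $H$-Galois, so $\can_H$ is bijective and in particular epimorphic; together with the existence just established, Corollary~\ref{cor:Q-Galois_closed} gives immediately that $Q$ is closed whenever $A/A^{co\,Q}$ is $Q$-Galois.

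For $(\Rightarrow)$, assume $Q$ closed, i.e. $\psi(A^{co\,Q})=Q$. First I would transfer closedness to $H$: since $H^{co\,Q}\subseteq H^{co\,Q'}$ forces $A^{co\,Q}=B\otimes H^{co\,Q}\subseteq B\otimes H^{co\,Q'}=A^{co\,Q'}$, the index set appearing in the supremum formula for $\psi^{H}(H^{co\,Q})$ (the map $\psi$ of~\eqref{eq:galois-for-hopf-alg}) is contained in that for $\psi(A^{co\,Q})$, whence $Q=\psi(A^{co\,Q})\succcurlyeq\psi^{H}(H^{co\,Q})$; as $\psi^{H}(H^{co\,Q})\succcurlyeq Q$ always holds, $Q$ is closed for $H$. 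By Theorem~\ref{thm:closed-of-qquot}(i) the extension $H/H^{co\,Q}$ is then $Q$-Galois, so $\can_Q^{H}\colon H\otimes_{H^{co\,Q}}H\to H\otimes Q$ is an isomorphism, its inverse being the restriction of~\eqref{eq:canK}. To conclude I would promote this to $A$: setting $K=H^{co\,Q}$, the subalgebra $A^{co\,Q}$ is itself a crossed product $B\#_\sigma K$ (legitimate since $K$ is a left coideal subalgebra, so that $\Delta^2(K)\subseteq H\otimes H\otimes K$), and I would build an isomorphism $A\otimes_{A^{co\,Q}}A\cong B\otimes\bigl(H\otimes_{K}H\bigr)$ of left $A$-modules and right $Q$-comodules under which $\can_Q^{A}$ becomes $\id_B\otimes\can_Q^{H}$; bijectivity of the latter then forces $\can_Q^{A}$ to be bijective, i.e. $A/A^{co\,Q}$ is $Q$-Galois.

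The hard part will be precisely this last identification. The relative tensor product $A\otimes_{A^{co\,Q}}A$ is taken over the noncommutative subalgebra $B\#_\sigma K$, and realising it as the asserted $B$-twist of $H\otimes_K H$ amounts to untwisting the cocycle $\sigma$: one uses the cleaving map $\gamma(h)=1_B\#h$ together with its convolution inverse, the measuring and cocycle identities, and the left coideal property of $K$. This crossed-product bookkeeping, rather than the lattice-theoretic reductions, is where the genuine difficulty lies.
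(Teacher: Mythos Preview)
Your proposal is correct and follows essentially the same route as the paper: both reduce existence to the intersection property of the flat Mittag--Leffler module~$B$ together with the Galois connection~\eqref{eq:galois-for-hopf-alg}, handle $(\Leftarrow)$ via Corollary~\ref{cor:Q-Galois_closed}, and for $(\Rightarrow)$ transfer closedness of~$Q$ down to~$H$, invoke Theorem~\ref{thm:closed-of-qquot}(i), and then identify $\can_Q^A$ with $\id_B\otimes\can_Q^H$ through an isomorphism $A\otimes_{A^{co\,Q}}A\cong B\otimes\bigl(H\otimes_{H^{co\,Q}}H\bigr)$. The paper packages the existence and the closedness transfer slightly more abstractly (factoring the connection through $\qsub(H)$ via $\zeta(K)=B\otimes K$ and an adjoint~$\omega$) and simply asserts the last isomorphism in a commutative square, so your flagging of that identification as the place where the crossed-product bookkeeping actually lives is apt.
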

\begin{proof}
    First of all, the Galois connection~\eqref{eq:galois-connection} exists,
    since we have a diagram:
    \begin{center}
	\begin{tikzpicture}
	    \matrix[column sep=1cm, row sep=1cm]{
		\node (C) {\(\Sub_\Alg(B\#_\sigma H)\)}; & \node (A) {\(\qsub(H)\)};  & \node (B) {\(\qquot(H)\)};\\
	    };
	    \draw[->] ($(A)+(9mm,1.5mm)$) --node[above]{\(\psi\)} ($(B) + (-10mm,1.5mm)$);
	    \draw[<-] ($(A)+(9mm,0mm)$) --node[below]{\(\phi\)} ($(B) + (-10mm,0mm)$);
	    \draw[->] ($(A)+(-9mm,0mm)$) --node[below]{\(\zeta\)} ($(C) + (12.5mm,0mm)$);
	    \draw[<-,dashed] ($(A)+(-9mm,1.5mm)$) --node[above]{\(\omega\)} ($(C) + (12.5mm,1.5mm)$);
	\end{tikzpicture}
    \end{center}
    where \((\phi,\psi)\) is the Galois
    connection~\eqref{eq:galois-for-hopf-alg} and \(\zeta(S)=B\otimes S\). The
    map \(\zeta\) preserves all intersections and so it has a left adjoint
    \(\omega\). Then the Galois connection~\eqref{eq:galois-connection} exists
    and it has the form \((\zeta\circ\phi,\psi\circ \omega)\), since by
    flatness of~\(B\) we have \(B\otimes H^{co\,Q}=(B\otimes H)^{co\,Q}\). 

    Let \(Q\in\qquot(H)\) be a closed element of this Galois connection, i.e.
    \(Q=\psi\omega\zeta\phi(Q)\). Then it is closed element
    of~\eqref{eq:galois-for-hopf-alg}, since it belongs to the image of
    \(\psi\), and thus, by Theorem~\ref{thm:closed-of-qquot}(i), the map:
    \(\can'_{Q}:H\otimes_{H^{co\,Q}}H\sir H\otimes Q\) is an isomorphism.
    We have a commutative diagram:
    \begin{center}
	\begin{tikzpicture}
	    \matrix[column sep=1.4cm,row sep=7mm]{
		\node (A0) {\(A\otimes_{A^{co\,Q}}A\)}; & \node (A1) {\(A\otimes Q\)}; \\
		\node (B0) {\(B\otimes H\otimes_{H^{co\, Q}}H\)}; & \node (B1) {\(B\otimes H\otimes Q\)}; \\
	    };
	    \draw[->] (A0) --node[above]{\(\can_Q\)} (A1);
	    \draw[->] (B0) --node[below]{\(\id_B\otimes\can'_Q\)}node[above]{\(\simeq\)} (B1);
	    \draw[->] (A0) --node[above,rotate=90]{\(\simeq\)} (B0);
	    \draw[->] (A1) --node[below,rotate=90]{\(=\)} (B1);
	\end{tikzpicture}
    \end{center}
    Thus  \(\can_Q\) is an isomorphism. The converse follows from
    Corollary~\ref{cor:Q-Galois_closed}, since \(\can:A\otimes_BA\sir A\otimes
	H\) is a bijection.
\end{proof}
Now we formulate a criterion for closedness of subextensions of \(A/B\)
which generalises~\ref{thm:closed-of-qquot}(ii).
\begin{theorem}
    Let \(A/B\) be an \(H\)-crossed product extension over a ring \(R\)
    with~\(B\) a faithfully flat Mittag--Leffler module and~\(H\) a flat (thus
    faithfully flat) \(R\)-module. Then a subalgebra \(S\in\Sub_\Alg(A/B)\)
    is a \textsf{closed element} of the Galois
    connection~\eqref{eq:galois-connection} if and only if the canonical map:
    \begin{equation}\label{eq:Schneider_canocial_map}
	\can_S: S\otimes A\ir A\cotensor_{\psi(S)} H,\quad\can_S(a\otimes b)=ab_{(1)}\otimes b_{(2)}
    \end{equation}
    is an \textsf{isomorphism}.
\end{theorem}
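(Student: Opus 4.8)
The plan is to reduce the statement to its Hopf-algebraic shadow, Theorem~\ref{thm:closed-of-qquot}(ii), through the normal-basis isomorphism \(A\cong B\otimes H\), exactly in the spirit of the proof of Theorem~\ref{thm:cleft-case}. The Galois connection~\eqref{eq:galois-connection} itself is available by that theorem, and I recall from its proof that \(\phi=\zeta\circ\phi_H\) and \(\psi=\psi_H\circ\omega\), where \((\phi_H,\psi_H)\) is the Hopf connection~\eqref{eq:galois-for-hopf-alg}, \(\zeta(K)=B\otimes K\) and \(A^{co\,Q}=B\otimes H^{co\,Q}\) by flatness of~\(B\). Hence a closed \(S\) is exactly one of the form \(S=A^{co\,Q}=B\#_\sigma K\) with \(Q:=\psi(S)\) and \(K:=H^{co\,Q}\); and since \(Q=\psi_H(\omega(S))\) and \(K=\phi_H(Q)\) lie in the images of \(\psi_H\) and \(\phi_H\), Proposition~\ref{prop:properties-of-adjunction}(1) shows that both are closed in~\eqref{eq:galois-for-hopf-alg}, so that \(Q=\psi_H(K)=H/K^+H\). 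Note also that the map~\eqref{eq:Schneider_canocial_map} is \(B\)-balanced (because \(B\subseteq A^{co\,H}\)), hence is defined on \(S\otimes_B A\), which is the source I shall use.

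For the implication ``\(S\) closed \(\Rightarrow\can_S\) is an isomorphism'' I would proceed as follows. Since \(K\) is a closed element of~\eqref{eq:galois-for-hopf-alg}, Theorem~\ref{thm:closed-of-qquot}(ii) (via~\eqref{eq:cocanK}) says that \(H\to H^K=H/K^+H=Q\) is a \(K\)-Galois coextension, i.e.\ the Hopf canonical map \(\can_K\colon K\otimes H\to H\cotensor_Q H\) is bijective. I then set up the commutative square
\begin{center}
\begin{tikzpicture}
    \matrix[column sep=1.4cm,row sep=8mm]{
	\node (A0) {\(S\otimes_B A\)}; & \node (A1) {\(A\cotensor_Q H\)}; \\
	\node (B0) {\(B\otimes(K\otimes H)\)}; & \node (B1) {\(B\otimes(H\cotensor_Q H)\)}; \\
    };
    \draw[->] (A0) --node[above]{\(\can_S\)} (A1);
    \draw[->] (B0) --node[below]{\(\id_B\otimes\can_K\)} (B1);
    \draw[->] (A0) --node[left]{\(\simeq\)} (B0);
    \draw[->] (A1) --node[right]{\(\simeq\)} (B1);
\end{tikzpicture}
\end{center}
whose right arrow is the isomorphism \(A\cotensor_Q H\cong B\otimes(H\cotensor_Q H)\) obtained from the \(Q\)-comodule splitting \(A\cong B\otimes H\) together with flatness of~\(B\) (tensoring by the flat module~\(B\) commutes with the cotensor product). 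The bottom arrow is an isomorphism because \(\can_K\) is one, and therefore so is \(\can_S\).

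For the converse ``\(\can_S\) an isomorphism \(\Rightarrow S\) closed'' I would pass to the closure \(\widetilde S:=A^{co\,\psi(S)}=\phi\psi(S)\supseteq S\), which is closed and satisfies \(\psi(\widetilde S)=\psi(S)=Q\). By the direction just proved, \(\can_{\widetilde S}\) is an isomorphism. The inclusion \(S\hookrightarrow\widetilde S\), tensored on the right over \(B\) with the flat \(B\)-module~\(A\), gives a commuting triangle over \(A\cotensor_Q H\) with legs \(\can_S\) and \(\can_{\widetilde S}\); both being isomorphisms forces \(S\otimes_B A\to\widetilde S\otimes_B A\) to be one, whence \((\widetilde S/S)\otimes_B A=0\). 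As \(A\cong B\otimes H\) is faithfully flat as a left \(B\)-module (because \(H\) is faithfully flat over~\(R\)), faithfully flat descent yields \(\widetilde S=S\), i.e.\ \(S\) is closed.

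The main obstacle is the left vertical isomorphism \(S\otimes_B A\cong B\otimes(K\otimes H)\) that makes the square commute: the crossed-product multiplication feeds the cocycle~\(\sigma\) into \(\can_S\), so this identification must be the cocycle-twisted one built from the cleaving map \(\gamma\colon H\to A\), and one must verify that it intertwines \(\can_S\) with \(\id_B\otimes\can_K\). This is precisely the kind of bookkeeping hidden behind the unlabelled ``\(\simeq\)'' in the diagram of Theorem~\ref{thm:cleft-case}; once it is in place, the remainder of the argument is formal, resting only on Theorem~\ref{thm:closed-of-qquot}(ii) and faithfully flat descent.
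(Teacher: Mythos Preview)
Your proposal is correct and follows essentially the same route as the paper: both reduce \(\can_S\) to the Hopf-level map \(\can_K\) of Theorem~\ref{thm:closed-of-qquot}(ii) through the normal-basis identification \(A\cong B\otimes H\) and a cocycle-twisted isomorphism, and both finish the converse by faithfully flat descent along \(_BA\). The paper makes the ``main obstacle'' you flag explicit, writing the left vertical map as \(\alpha\) and factoring the bottom map as \(\beta=(\id_B\otimes\can_K)\circ\gamma\) where \(\gamma(a\#_\sigma k\otimes h)=a\,\sigma(k_{(1)},h_{(1)})\otimes k_{(2)}h_{(2)}\) is the cocycle twist; this is exactly the bookkeeping you anticipated. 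The one organisational difference is in the converse: the paper runs the same square directly for an arbitrary \(S\subseteq B\#_\sigma K\) (with \(K=H^{co\,\psi(S)}\)), deduces that \(\alpha\) is an isomorphism, and then compares with \((B\#_\sigma K)\otimes_B A\); you instead bootstrap from the already-proved forward direction applied to the closure \(\widetilde S=A^{co\,\psi(S)}\) and conclude \((\widetilde S/S)\otimes_B A=0\). Your version is slightly cleaner since it avoids re-invoking the diagram, at the cost of relying on \(\psi(\widetilde S)=\psi(S)\); both reach the same faithfully flat descent step.
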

\begin{proof}
    First let us note that the map \(\can_S\) is well defined since it is
    a composition of \(S\otimes A\sir A^{co\,\psi(S)}\otimes A\), induced by
    the inclusion \(S\subseteq A^{co\psi(S)}\), with \(A^{co\,\psi(S)}\otimes
	A\sir A\cotensor_{\psi(S)}H\) of \cite[Thm~1.4]{hs:normal-bases}.

    Now let us assume that \(\can_S\) is an isomorphism.  We let
    \(K=H^{co\,\psi(S)}\). Since \(B\) is a flat \(R\)-module we have
    \(\left(B\#_\sigma H\right)^{co\,\psi(S)}=B\#_\sigma K\). The following
    diagram commutes:
    \begin{center}
	\hfill\begin{tikzpicture}
	    \matrix[matrix, column sep=1.5cm, row sep=.75cm]{
		\node[anchor=east] (A) {\(S\otimes_B A\)};
		& \node[anchor=west] (B) {\(A\cotensor_{\psi(S)} H\)};\\
		\node[anchor=east] (C) {\(B\#_\sigma K\otimes H\)};
		& \node[anchor=west] (D)
		{\(B\otimes\left(H\cotensor_{\psi(S)}H\right)\)};\\ };
	    \draw[->] (A) --node[above]{\(\can_S\)} (B); \draw[->] (C)
	    --node[below]{\(\beta\)} (D); \draw[->] (A)
	    --node[left]{\(\alpha\)} ($(A)+(0,-1)$); \draw[->] (B)
	    --node[above,rotate=-90]{\(\simeq\)} ($(B)+(0,-1)$); 
	\end{tikzpicture}
	\hfill\refstepcounter{equation}\raisebox{12mm}{(\theequation)}\label{diag:1}\\
    \end{center}
    where \(\alpha:S\otimes_BA\sir\left(B\#_\sigma K\right)\otimes_B\left(B\#_\sigma
	    H\right)\sir B\#_\sigma K\otimes H\) is given by
    \[\alpha\left(a\otimes_Bb\#_\sigma h\right)=\left(a\cdot b\#_\sigma 1_H\right)\otimes h\quad\text{for }a\in S\subseteq B\#_\sigma K,\ b\#_\sigma h\in A\] 
    It is well defined since \(K\) is a left comodule subalgebra of \(H\).
    The second vertical map \(\left(B\#_\sigma H\right)\cotensor_{\psi(S)}
	H\sir B\otimes\left(H\cotensor_{\psi(S)}H\right)\) is the natural
    isomorphism, since \(B\) is flat over \(R\). The map \(\beta:B\#_\sigma
	K\otimes H\sir B\otimes\left(H\cotensor_{\psi(S)}H\right)\) is defined
    by \(\beta(a\#_\sigma k\otimes h)=\left(a\#_\sigma k\cdot 1_B\#_\sigma
	    h_{(1)}\right)\otimes h_{(2)}\). Note that
    \(\beta=\id_B\otimes\can_K\circ\gamma\) where \(\gamma:\left(B\#_\sigma
	    K\right)\otimes H\sir\left(B\#_\sigma K\right)\otimes H\),
    \(\gamma\left(a\#_\sigma k\otimes h\right)\coloneq a\sigma(k_{(1)},
	h_{(1)})\otimes k_{(2)}\otimes h_{(2)}\) is an isomorphism, since
    \(\sigma\) is convolution invertible, while \(\can_K\) denotes the canonical
    map~\eqref{eq:cocanK}. Furthermore, the map \(\can_K\) is an isomorphism
    by Theorem~\ref{thm:closed-of-qquot}(ii), since \(K:=H^{co\,\psi(S)}\).
    By commutativity of the above diagram it follows that \(\alpha\) is an
    isomorphism. Now, let us consider the commutative diagram:
    \begin{center}
	\begin{tikzpicture}
	    \matrix[matrix, column sep=1.5cm, row sep=.75cm]{
		\node (A) {\(S\otimes_B\left(B\#_\sigma H\right)\)}; & \node (B) {\(B\#_\sigma K\otimes H\)};\\
		\node (C) {\(\left(B\#_\sigma K\right)\otimes_B \left(B\#_\sigma H\right)\)}; & \\
	    };
	    \draw[->] (A) --node[above]{\(\alpha\)} (B);
	    \draw[->] (A) -- (C);
	    \draw[->] (C) --node[below right]{\(\simeq\)} (B);
	\end{tikzpicture}
    \end{center}
    Thus \(S=B\#_\sigma K\) is indeed closed, since \(\alpha\) is an
    isomorphism and \(B\#_\sigma H\) is a faithfully flat \(B\)-module under
    the assumptions made.

    If \(S\) is closed then \(S=A^{co\,\psi(S)}=B\#_\sigma
	(H^{co\,\psi(S)})\), since \(B\) is a flat \(R\)-module. One easily
    checks that \(\alpha\) is an isomorphism with inverse:
    \(\alpha^{-1}(a\#_\sigma k\otimes h)=\left(a\#_\sigma
	    k\right)\otimes_B\left(1_B\#_\sigma h\right)\).  The left coideal
    subalgebra \(K=H^{co\,\psi(S)}\) is a closed element
    of~\eqref{eq:galois-for-hopf-alg} hence by
    Theorem~\ref{thm:closed-of-qquot}(ii) \(\can_K\) is an isomorphism and
    thus \(\beta\) is an isomorphism. It follows from~\eqref{diag:1} that
    \(\can_S\) is an isomorphism as well.
\end{proof}

\end{document}